\documentclass{amsart} 
\pagestyle{plain}

\frenchspacing 
\usepackage{a4wide} 

\usepackage{amsthm}
\usepackage{amssymb}
\usepackage{amsmath} 
\usepackage{amscd}
\usepackage{color}

\newtheorem{theorem}{Theorem}[section]
\newtheorem{lemma}[theorem]{Lemma}
\newtheorem{corollary}[theorem]{Corollary}
\newtheorem{proposition}[theorem]{Proposition}
\newtheorem{example}[theorem]{Example}

\theoremstyle{definition}

\newcommand{\Hom}{\mbox{Hom}}
\newcommand{\Mor}{\mbox{Mor}}

\newcommand{\Set}{\mbox{Set}}

\newcommand{\Ac}{\mathcal{A}}

\newcommand{\Bc}{\mathcal{B}}

\newcommand{\Z}{\mathbb{Z}}

\newcommand{\Cc}{\mathcal{C}}

\newcommand{\M}{\mathcal{M}}

\DeclareMathOperator{\id}{\rm id}

\DeclareMathOperator{\A}{\rm Act}
\DeclareMathOperator{\oA}{{\rm Act}}

\begin{document}
\title{Connected objects in categories of $S$-acts}

\author{Josef Dvo\v r\'ak}
\email{pepa.dvorak@post.cz}
\address{CTU in Prague, FEE, Department of mathematics, Technick\'a 2, 166 27 Prague 6 \&
	MFF UK, Department of Algebra,  Sokolovsk\' a 83, 186 75 Praha 8, Czechia}

\author{Jan \v Zemli\v cka}
\email{zemlicka@karlin.mff.cuni.cz}
\address{Department of Algebra, Charles University in Prague,
Faculty of Mathematics and Physics Sokolovsk\' a 83, 186 75 Praha 8, Czechia}

\begin{abstract} 
The main goal of the paper is description of connected and projective objects of classes of categories that include categories of acts along with categories of pointed acts.
In order to establish a general context and to unify the approach to both of the categories of acts, the notion of a concrete category with a unique decomposition of objects is introduced and studied. Although these categories are not extensive in general,
it is proved in the paper that they satisfy a version of extensivity, which ensures that every noninitial object is uniquely decomposable into indecomposable objects. 
\end{abstract}

\subjclass[2010]{20M50  (20M30)}

\keywords{connected object, act over a monoid, category of acts}

\thanks{This work is part of the project SVV-2020-260589}
\date{\today}

\maketitle

\section{Introduction}

While the great impact of category theory on theory of rings and modules is well known, the analogous concept in the context of theory of monoids and acts over monoids on sets is significantly less studied, however it seems to be promising and fruitful as it is demonstrated in the monograph \cite{KKM}. The aim of this paper
is to introduce and study a class of categories including the most important categories of acts, which are given by acting of a general monoid on sets 
and by acting of a monoid with zero on pointed sets. However a category of pointed acts over a monoid with zero seems to be closer to a module category then a category of standard acts, our main goal  is to develop tools that can apply simultaneously to both cases.

Recall that an object $c$ of an abelian
category closed under coproducts and products is said to be {\it compact} if the corresponding covariant functor $\Hom(c,-)$ commutes with arbitrary coproducts i.e. there is a canonical isomorphism in the category of abelian groups $\Hom(c,\coprod \mathcal D)\cong \coprod\Hom(c,\mathcal D)$ for every family of objects $\mathcal D$, where $\coprod$ denotes a coproduct \cite{DZ21, Tr}. 
Compact objects are called {\it small} in categories of (right $R$-)modules \cite{AM,CT,Dv}. The notion of a compact object in non-abelian categories  has different meaning; it is usually defined as an object such that the corresponding covariant hom-functors commute with filtered colimits, nevertheless, it can be proved that that this notion is stronger \cite{EKN03}. In compliance with convention, an object $c$ such that  $\Mor(c,-)$ commutes with all coproducts is called {\it connected} in this paper.

The main motivation of the present paper, reflected by its title, is an issue of translating description of connectedness from abelian categories to a more general context. The constitutive example of such a generalization is provided by the analogy between (abelian) categories of modules over rings
and (non-abelian) categories of acts over monoids (cf. also the corresponding description of connectedness in the case of Ab5 categories \cite{DZ21}). Moreover, note that both the categories of acts and pointed acts are quasi pointed categories (see \cite{Bo}).

The list of works dedicated to the research of connected and compact objects in various categories is long. Let us mention only those related to our concept of linking (self)small modules, (auto)compact objects in abelian categories and (auto)connected objects in non-abelian context.
However, the notion of autocompactness of modules \cite[Proposition 1.1]{AM} was generalized to Grothendieck 
categories in \cite{GNM}, the main motivation for the study of connected objects in abelian categories comes from the context of 
representable equivalences of module categories \cite{CM,CT}, where the notion of (generalized) $\ast$-module plays a key 
role. Analogous problem in non-abelian case, in particular (generalized) $\ast$-objects and (auto)connected objects, is 
studied in the paper \cite{Mo10}. Note that compact objects play also an important role in triangulated categories \cite[Section 8]{AN}, 
as they are compactly generated, in particular, for the description of Brown representability  \cite{Kr01}. 
Although locally presentable and accessible categories deal with connectedness in the narrow sense \cite{AR, MP},
the motivation and application of the notion is close to the abelian case.

Turning the attention towards the categories of modules, as is shown in \cite{B} and in \cite[1$^{o}$]{Re}, 
small modules can be structurally described in a natural way
by the language of families of submodules:

\begin{lemma}\cite{B,Re}\label{21} The following conditions are equivalent for a module $M$:
\begin{enumerate}
\item[(1)] $M$ is small,
\item[(2)] if $M=\bigcup_{i<\omega}M_n$ for an increasing chain of submodules 
$M_n\subseteq M_{n+1}\subseteq M$, then there exists $n$ such that $M=M_n$,
\item[(3)] if $M=\sum_{i<\omega}M_n$ for a family of submodules 
$M_n\subseteq M$, $n<\omega$, then there exists $k$ such that $M=\sum_{i<k}M_n$.
\end{enumerate}
\end{lemma}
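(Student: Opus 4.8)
The plan is to run the cycle of implications $(1)\Rightarrow(2)\Rightarrow(3)\Rightarrow(1)$, using throughout the standard reformulation that a module $M$ is small exactly when every homomorphism $f\colon M\to\coprod_{j\in J}X_j$ into a coproduct has image inside a finite subcoproduct $\coprod_{j\in F}X_j$ with $F\subseteq J$ finite. Among the three implications, $(2)\Rightarrow(3)$ is pure bookkeeping: if $M=\sum_{n<\omega}M_n$, the partial sums $N_k=\sum_{n<k}M_n$ form an increasing chain with union $\sum_{n<\omega}M_n=M$, so $(2)$ applied to $(N_k)_{k<\omega}$ produces $k$ with $M=N_k=\sum_{n<k}M_n$. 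The two substantial implications are $(1)\Rightarrow(2)$ and $(3)\Rightarrow(1)$, which pass between the categorical and the combinatorial conditions.

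For $(1)\Rightarrow(2)$ I would use the canonical test homomorphism. Given an increasing chain $M_0\subseteq M_1\subseteq\cdots$ with $\bigcup_n M_n=M$, set $f\colon M\to\coprod_{n<\omega}M/M_n$, $f(m)=(m+M_n)_{n<\omega}$. This is well defined because each $m$ lies in some $M_n$, so all but finitely many coordinates of $f(m)$ vanish; it is obviously $R$-linear. Smallness yields a finite $F\subseteq\omega$ with $f(M)\subseteq\coprod_{n\in F}M/M_n$, and then for any $k>\max F$ every $m\in M$ has $m+M_k=0$, i.e. $M=M_k$.

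The implication $(3)\Rightarrow(1)$ is the heart of the proof; I would argue it by contraposition. Assume $M$ is not small, so there is a family $(X_j)_{j\in J}$ and $f\colon M\to\coprod_{j\in J}X_j$ whose image lies in no finite subcoproduct. The difficulty is that $J$ may be uncountable while $(3)$ only controls countable families, so the task is to extract a countable $J'\subseteq J$ that already witnesses the failure. Recursively pick $m_1,m_2,\dots\in M$ and finite sets $F_1\subseteq F_2\subseteq\cdots\subseteq J$ such that $F_n$ contains all coordinates where $f(m_1),\dots,f(m_n)$ are nonzero and such that $f(m_{n+1})$ is nonzero at some coordinate outside $F_n$ — possible precisely because $f(M)\not\subseteq\coprod_{j\in F_n}X_j$. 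Put $J'=\bigcup_n F_n$ (countable) and let $g=\pi\circ f$, where $\pi\colon\coprod_{j\in J}X_j\to\coprod_{j\in J'}X_j$ is the coordinate projection. Then $g(m_n)=f(m_n)$ for all $n$, so $g(M)$ still lies in no finite subcoproduct of $\coprod_{j\in J'}X_j$: any finite $T\subseteq J'$ sits inside some $F_n$, and $g(m_{n+1})$ escapes $\coprod_{j\in F_n}X_j\supseteq\coprod_{j\in T}X_j$. Finally enumerate $J'=\{j_1,j_2,\dots\}$ and put $M_n=g^{-1}\!\bigl(\coprod_{i\le n}X_{j_i}\bigr)$. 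Since every element of a coproduct has finite support, $\bigcup_n M_n=M$, hence $\sum_{n<\omega}M_n=M$; yet $\sum_{n<k}M_n=M_{k-1}\neq M$ for every $k$, because $g(M)\not\subseteq\coprod_{i<k}X_{j_i}$. Thus $(M_n)_{n<\omega}$ falsifies $(3)$, which is what we wanted.

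I expect the countable reduction in the last step to be the main obstacle. The tempting shortcut — replacing $M$ by the submodule generated by $m_1,m_2,\dots$ — fails because smallness need not descend to submodules; one must keep $M$ itself and instead shrink only the \emph{codomain}, composing $f$ with the projection onto the countably many coordinates that the recursion has exhausted. The greedy choice of the $F_n$ is exactly what guarantees that this projection does not accidentally make the image fit into a finite subcoproduct.
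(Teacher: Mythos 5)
Your proof is correct; the paper itself gives no argument for this lemma (it is quoted from Bass and Rentschler), and your cycle $(1)\Rightarrow(2)\Rightarrow(3)\Rightarrow(1)$ with the test map $m\mapsto(m+M_n)_n$ and the countable-reduction via composing with the projection onto $\coprod_{j\in J'}X_j$ is exactly the standard argument from those sources. The one point worth stressing is the one you already flag: shrinking the codomain rather than the domain is what makes $\bigcup_n g^{-1}\bigl(\coprod_{i\le n}X_{j_i}\bigr)=M$ hold, and your greedy choice of the $F_n$ correctly guarantees that the projected map still escapes every finite subcoproduct.
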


Note that the condition (2) implies immediately that every finitely generated
module is small and (3) shows that there are no
countably infinitely generated small modules.
On the other hand, there are natural constructions of infinitely generated small modules:

\begin{example} \rm (1) A union of a strictly increasing chain
of length $\kappa$, for an arbitrary cardinal
$\kappa$ of uncountable cofinality, consisting of small (in particular finitely generated) submodules is small.

(2) Every $\omega_1$-generated uniserial module is small.
\end{example}

A ring over which the class of all small right modules coincides with the class of all finitely generated ones is called {\it right steady}.
Note that the class of all right steady rings is closed under factorization \cite[Lemma 1.9]{CT},
finite products \cite[Theorem 2.5]{Tr95}, and Morita equivalence \cite[Lemma 1.7]{EGT}. 
However, a ring theoretical characterization of steadiness remains an open
problem with partial results concerning right steadiness of certain  natural classes of rings including right noetherian \cite[$7^0$]{Re}, right perfect \cite[Corollary 1.6]{CT}, right semiartinian of finite socle length \cite[Theorem 1.5]{Tr}
countable commutative \cite[11$^{0}$]{Re}, and
abelian regular rings with countably generated ideals \cite[Corollary 7]{ZT}.

The main task of the first half of the paper is presenting two variants of categories of acts over monoids, namely acts and pointed acts (i.e. acts with a zero element), via a joint general categorial language, in particular,  the notion of a $UD$-category is introduced. Section 3 deals with the crucial issue of decompositions in $UD$-categories. Although categories of pointed acts are not extensive, it is proved that $UD$-categories satisfy  weak version of extensivity (Proposition~\ref{mono-extensive}), which ensures uniqueness of decomposition (Proposition~\ref{Hoefnagel}). Since UD-categories contain enough indecomposable objects, 
their necessary basic properties follow, with Theorem~\ref{decomposition} formulating the existence and uniqueness of indecomposable decomposition. A general composition theory of projective objects in a $UD$-category is built in the next section, where the main result of the section, Theorem~\ref{proj-decomp}, characterizes projective objects as coproducts of indecomposable projective objects. Section 5 lists general properties of connected objects in a $UD$-category, in particular, Theorem~\ref{DcompChar} presents a general criterion for connectedness of objects in a UD-category.
Finally, as an application of this theory the characterization of (auto)connected objects in categories of $S$-acts is also provided (Theorem~\ref{ThmAuto},  Proposition~\ref{PropAuto}).

\section{Axiomatic description of categories of acts}

Before we start the study of common categorial properties of classes of acts over monoids, let us recall some necessary terminology and notation.

Let $\mathcal{C}$ be a category. Denote by $\Mor_\mathcal{C}(A,B)$ the class of all morphisms $A\to B$ in $\mathcal{C}$ for every pair of objects $A,B$ of $\mathcal{C}$; in case $\mathcal{C}$ is clear from the context, the subscript will be omitted.
A monomorphism (epimorphism) in $\mathcal{C}$ is a left (right)-cancellable morphism, i.e., a morphism $\mu$ such that $\mu\alpha = \mu\beta$ ($\alpha\mu = \beta\mu$) implies $\alpha = \beta$. A morphism is a bimorphism, if it is both mono- and epimorphism. A category is balanced, if bimorphisms are isomorphisms (the reversed inclusion holds in general). An object $\theta$ is called initial provided $|\Mor(\theta,A)|=1$ for each object $A$. The category is (co)product  complete if the class of objects is closed under all (co)products. Note that any coproduct complete category contains an initial object, which is isomorphic to $\coprod\emptyset$.
A pair $(\Cc,U)$ is said to be a \emph{concrete category} over the category of sets, which is denoted $\Set$ in the whole paper, if $\Cc$ is a category and $U: \Cc\to\Set$ is a faithful functor.
Finally, a \emph{family} of objects means any discrete diagram and the phrase \emph{the universal property of a coproduct} refers to the existence of unique morphism from a coproduct.

Let $\mathcal S=(S,\cdot, 1)$ be a monoid and $A$ a nonempty set. If there is a mapping $\mu: S \times A \rightarrow A$ satisfying the following two conditions: $\mu\left( 1, a\right) = a$ and $\mu\left( s_2, \mu\left( s_1, a\right) \right) = \mu\left( s_2\cdot s_1, a\right)$ then the pair $(A,\mu)$ of a set $A$ along with the left action $\mu$ is said to be a \emph{left $S$-act}. For simplicity, $\mu\left( s,a\right) $ is often written as $s\cdot a$ or $sa$ and an act  and the act $(A,\mu)$ is denoted $_SA$. 
A mapping $f: {}_{S}A \rightarrow {}_{S}B$ is a homomorphism of $S$-acts, or an $S$-homomorphism provided $f\left( sa\right) = s f\left( a\right)$ holds for any $s\in S, a \in A$.
We denote by
$S$-$\A$ the category of all left $S$-acts with homomorphisms of $S$-acts. Let us point out that we consider the empty $S$-act $_S\emptyset$ to be an (initial) object of $S$-$\A$. Henceforth this category corresponds to the category $S$-$\overline{\A}$ in \cite{KKM}.

Let $\mathcal S$ be a monoid  containing a (necessarily unique) zero element $0$. Then an $S$-act $A$ containing a fixed element $0_A$ which satisfies the axiom $0a=0_A$ for each $a\in A$ is called a \emph{pointed left act} and  then the category of all pointed left $S$-acts with homomorphisms of $S$-acts compatible with zero as morphisms will be denoted by $S$-$\A_0$. Observe that $\{0\}$ is an initial object of the category $S$-$\A_0$.

Recall that both categories $S$-$\oA$ and $S$-$\A_0$ are complete and cocomplete \cite[Remarks II.2.11, Remark II.2.22]{KKM},
in particular, a coproduct of a family of objects $(A_i, i\in I)$ is
\begin{enumerate}
\item[(i)] a disjoint union $\coprod_{i\in I} A_i =\dot{\bigcup} A_i $  in  $S$-$\oA$ by \cite[Proposition II.1.8]{KKM} and

\item[(ii)] $\coprod_{i\in I} A_i =\{(a_i)\in\prod_{i\in I} A_i |\ \exists j: a_i=0 \forall i\ne j\}$ in  $S$-$\A_0$ by \cite[Remark II.1.16]{KKM}.
\end{enumerate}
Furthermore, if we denote  the natural forgetful functor from $S$-$\oA$ or $S$-$\A_0$ into $\Set$ by $U$ (which maps an act to the underlying set of elements and an $S$-homomorphism to the corresponding mapping between sets) both $(S$-$\A_0,U)$ and $(S$-$\oA,U)$ are concrete categories over $\Set$.
Note that  an act can be defined
as a functor from a single-object category to the category of pointed sets and a
pointed act can be defined as a point-preserving functor from a single-object
pointed category to the category of pointed sets where morphisms are natural transformations. In consequence,
 the category of acts is a presheaf category, while
 the category of pointed acts forms a
point-enriched version of a presheaf category, in the sense of enriched category theory.

Let $\mathcal C$ be a coproduct complete category with an initial object $\theta\cong\coprod\emptyset$.
An object $A\in\mathcal{C}$ is called {\it indecomposable} if it is not isomorphic to an initial object nor to a coproduct of two non-initial objects. Note that cyclic acts present natural examples of indecomposable objects in both categories $S$-$\oA$ and $S$-$\A_0$. Nevertheless, the class of indecomposable acts can be much larger,
e. g. the rational numbers form a non-cyclic indecomposable $(\mathbb Z,\cdot)$-act.

As we have declared, the main motivation of the present paper is to describe and investigate connectedness properties of categories of acts over monoids in the general categorial language. 
In particular, we focus on the categories $S$-$\oA$ and $S$-$\A_0$.
The key feature of both of these categories is the existence
of a unique decomposition of every object into indecomposable objects,
which is proved in \cite[Theorem I.5.10]{KKM} for the case of the category $S$-$\oA$. 

First of all, we list several natural categorial properties which ensure an easy handling of the category, the uniqueness of decomposition and provide the existence condition as well. Recall that a pair $(S,\nu)$ is said to be a subobject of an object $A$ if $S$ is an object and $\nu: S\to A$ is a monomorphism.

We say that a concrete category $(\mathcal C,U)$ over the category $\Set$ is a {\it UD-category} (unique decomposition) if the following conditions hold:
\begin{enumerate}
\item[(UD1)] $\mathcal C$ is a coproduct complete balanced category with an initial object $\theta\cong \coprod\emptyset$, for which each morphism $\theta \to A$ is a monomorphism and there is at most one morphism $A \to \theta$,
\item[(UD2)] for any morphism $f\in \Mor(A, B)$ in $\mathcal C$, there exists a subobject $(A^f,\iota)$ of $B$ such that $U(A^f)=U(f)(U(A))\subseteq U(B)$ and $U(\iota)\in \Mor(U(A^f),U(B))$ is the subset inclusion map,
\item[(UD3)]  for each morphism $f\in \Mor(A, B)$ and every subobject $(S,\nu)$ of $B$ such that $U(f)(U(A))\subseteq U(\nu)(U(S))$, there exists a morphism $g\in \Mor(A, S)$ such that $f=\nu g$,
\item[(UD4)] for every family $(A_i,\nu_i)_{i\in I}$ of subobjects of an object $A$, there exist subobjects denoted by $(\bigcap_iA_i^{\nu_i},\iota_\cap)$ and $(\bigcup_iA_i^{\nu_i},\iota_\cup)$ such that
\[
U(\bigcap_iA_i^{\nu_i})=\bigcap_iU(\nu_i)(U(A_i))=\bigcap_iU(A_i^{\nu_i}), \
U(\bigcup_iA_i^{\nu_i})=\bigcup_iU(\nu_i)(U(A_i))=\bigcup_iU(A_i^{\nu_i})
\] 
and both $U(\iota_\cap)$, $U(\iota_\cup)$ are the corresponding subset inclusion mappings,

\item[(UD5)] if $(A,(\nu_0,\nu_1))$ is a coproduct of a pair of objects $(A_0,A_1)$,
then $\nu_0$ and $\nu_1$ are monomorphisms and $\bigcap_{i=1}^2 A_i^{\nu_i}$ is isomorphic to $\theta$,
\item[(UD6)] for every object $A$ and every $x\in U(A)$ there exists an indecomposable subobject $(B, \nu)$ of $A$ such that $x\in U(B^\nu)=U(\nu)(U(B))\subseteq U(A)$.
\end{enumerate}

Any monomorphism $\iota: A\to B$ such that $U(\iota)$ is the subset inclusion map is called \emph{inclusion morphism}.
Note that we will use the notation $(A^f,\iota)$ from (UD2) and $(\bigcup_iA_i^{\nu_i},\iota_\cup)$ from (UD4) freely without other explanations. Moreover, we will write $A_0^{\nu_0}\cap A_1^{\nu_0}$ ($A_0^{\nu_0}\cup A_1^{\nu_0}$ respectively) instead of $\bigcap_{i=1}^2 A_i^{\nu_i}$ ($\bigcup_{i=1}^2 A_i^{\nu_i}$ respectively).

First we make an elementary but frequently used (sometimes without reference) observation:

\begin{lemma}\label{mono-epi}
Let $\psi$ be a morphism of a UD-category $(\Cc,U)$. 
\begin{enumerate}
\item[(1)] If $U(\psi)$ is injective, then $\psi$ is a monomorphism.
\item[(2)] If $U(\psi)$ is surjective, then $\psi$ is an epimorphism.
\item[(3)] $\psi$ is an isomorphism if and only if $U(\psi)$ is a bijection.
\end{enumerate}
\end{lemma}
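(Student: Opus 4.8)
The plan is to derive all three items from two features built into the definition of a UD-category: the functor $U$ is faithful (since $(\Cc,U)$ is a concrete category over $\Set$) and $\Cc$ is balanced by (UD1); no other axiom is needed.

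For (1) I would use the fact that a faithful functor reflects left-cancellability. Suppose $\psi\alpha=\psi\beta$ for a parallel pair of morphisms $\alpha,\beta$. Applying $U$ gives $U(\psi)U(\alpha)=U(\psi)U(\beta)$, and since $U(\psi)$ is injective it is left-cancellable in $\Set$, so $U(\alpha)=U(\beta)$; faithfulness of $U$ then forces $\alpha=\beta$, whence $\psi$ is a monomorphism. Item (2) is the exact dual: from $\alpha\psi=\beta\psi$ one obtains $U(\alpha)U(\psi)=U(\beta)U(\psi)$, and surjectivity of $U(\psi)$ makes it right-cancellable in $\Set$, so $U(\alpha)=U(\beta)$ and hence $\alpha=\beta$.

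For (3) the forward implication is immediate, since any functor carries isomorphisms to isomorphisms, so $U(\psi)$ is a bijection whenever $\psi$ is invertible. Conversely, if $U(\psi)$ is a bijection then it is both injective and surjective, so by (1) and (2) the morphism $\psi$ is simultaneously a monomorphism and an epimorphism, that is, a bimorphism; since $\Cc$ is balanced, $\psi$ is an isomorphism. There is no genuine obstacle here — the lemma is a formal consequence of the two features above — and the only point worth flagging is that (3) really does use the balanced hypothesis (a bimorphism need not be invertible otherwise), whereas (1) and (2) need nothing beyond the faithfulness of $U$.
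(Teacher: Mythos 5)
Your proof is correct and follows the same route as the paper: (1) and (2) are the standard argument that a faithful functor reflects monomorphisms and epimorphisms once $U(\psi)$ is injective (hence monic) or surjective (hence epic) in $\Set$, and (3) combines preservation of isomorphisms by any functor with the balanced hypothesis from (UD1). Your write-up merely spells out the cancellation steps that the paper leaves implicit.
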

\begin{proof} Since injective maps are monomorphisms and surjective maps are epimorphisms in $\Set$, (1) and (2) follow immediately from the hypothesis that $U$ is a faithful functor.

(3) The direct implication follows from the well-known fact that any functor preserves isomorphisms and the  reverse one follows from (1) and (2) since $\Cc$ is a balanced category.
\end{proof}

As an easy consequence we obtain a natural property of subobjects in UD-categories:

\begin{lemma}\label{subobject} Let $(B,\nu)$ be a subobject of an object $A$ in a  UD-category $(\Cc,U)$. If $(B^\nu,\iota)$ is a subobject with the inclusion morphism $\iota$ from (UD2) and $\tilde{\nu}\in\Mor(B,B^{\nu})$ from (UD3) satisfying $\nu=\iota\tilde{\nu}$, then $\tilde{\nu}$ is an isomorphism and so $U(\nu)$ is injective.
\end{lemma}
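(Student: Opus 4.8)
The plan is to show that $\tilde{\nu}$ is simultaneously a monomorphism and an epimorphism, and then to invoke the hypothesis that $\Cc$ is balanced to conclude it is an isomorphism; the injectivity of $U(\nu)$ will then follow at once.

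First I would check that $\tilde{\nu}$ is a monomorphism. This is purely formal: since $\nu=\iota\tilde{\nu}$ is a monomorphism by assumption, any pair $\alpha,\beta$ with $\tilde{\nu}\alpha=\tilde{\nu}\beta$ satisfies $\nu\alpha=\iota\tilde{\nu}\alpha=\iota\tilde{\nu}\beta=\nu\beta$, whence $\alpha=\beta$; thus $\tilde{\nu}$ is left-cancellable.

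Next I would establish that $U(\tilde{\nu})$ is surjective onto $U(B^\nu)$. Applying the faithful functor $U$ to $\nu=\iota\tilde{\nu}$ gives $U(\nu)=U(\iota)\circ U(\tilde{\nu})$, hence $U(\nu)(U(B))=U(\iota)\big(U(\tilde{\nu})(U(B))\big)$. By (UD2) the left-hand side equals $U(B^\nu)=U(\iota)(U(B^\nu))$, and since $U(\iota)$ is the subset inclusion map it is injective; cancelling $U(\iota)$ yields $U(\tilde{\nu})(U(B))=U(B^\nu)$, i.e. $U(\tilde{\nu})$ is surjective. By Lemma~\ref{mono-epi}(2) this makes $\tilde{\nu}$ an epimorphism.

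Finally, since $\Cc$ is balanced by (UD1), the bimorphism $\tilde{\nu}$ is an isomorphism, and consequently $U(\tilde{\nu})$ is a bijection (for instance by Lemma~\ref{mono-epi}(3), or simply because functors preserve isomorphisms). Then $U(\nu)=U(\iota)\circ U(\tilde{\nu})$ is a composite of an injection with a bijection, hence injective, as claimed. The only step requiring the slightest care is the cancellation of $U(\iota)$ in the surjectivity argument, which is legitimate precisely because (UD2) guarantees $U(\iota)$ is an honest subset inclusion; apart from that the argument is entirely formal, so I do not anticipate a genuine obstacle.
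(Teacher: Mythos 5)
Your proof is correct and follows essentially the same route as the paper: show $U(\tilde{\nu})$ is surjective onto $U(B^\nu)$ using (UD2) and the fact that $U(\iota)$ is a subset inclusion, conclude $\tilde{\nu}$ is an epimorphism by Lemma~\ref{mono-epi}(2), invoke balancedness, and read off injectivity of $U(\nu)$. The only difference is that you explicitly verify that $\tilde{\nu}$ is a monomorphism by cancellation against $\nu=\iota\tilde{\nu}$, a small detail the paper's proof takes for granted.
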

\begin{proof} Since $U(\tilde{\nu})(U(B))=U(\iota)U(\tilde{\nu})(U(B))=U(\nu)(U(B))=U(B^\nu)$ by (UD2), the morphism $\tilde{\nu}$ is an epimorphism by Lemma~\ref{mono-epi}(2). As $\tilde{\nu}$ is a monomorphism and $\Cc$ is a balanced category, $\tilde{\nu}$ is an isomorphism. Since $U(\tilde{\nu})$ is a bijection by 
Lemma~\ref{mono-epi}(3), $U(\tilde{\nu}) = U(\iota) U(\tilde{\nu})$ is injective.
\end{proof}

Let us note that both categories of acts treated in this paper satisfy the previous axiomatics:

\begin{example}\label{act-UD}\rm (1) Let $\mathcal S=(S,\cdot, 1)$ be a monoid. We show that all conditions (UD1)--(UD6) are satisfied by $(\mathcal S$-$\oA,U)$ for the natural forgetful functor $U:S$-$\oA\to \Set$, hence it is a UD-category. 

We have already mentioned that $\mathcal S$-$\oA$ is a coproduct complete category  and that $(\mathcal S$-$\oA,U)$ is a concrete category over $\Set$. Furthermore, the empty act $\emptyset$ with the empty mapping represents an initial object
and the empty map is a monomorphism, since there is no morphism of a nonempty act into $\emptyset$.
Since monomorphisms are exactly injective morphisms, epimorphisms are surjective morphisms and isomorphisms are bijections, $\mathcal S$-$\oA$ is (epi,mono)-structured hence a balanced category (cf. \cite[Section 14]{AHS}), which proves (UD1).
Let us put $A^f=f(A)$ for every morphism $f:A\to B$ and note that intersections and unions of subacts forms subacts as well,
then the conditions (UD2), (UD3), (UD4) and (UD5) follow either immediately from the definition of an act or from well-known basic properties (cf. \cite{KKM}), and (UD6) holds true since cyclic acts are indecomposable.

(2) Let $\mathcal S_0=(S_0,\cdot, 1)$ be a monoid with a zero element $0$. Then $\mathcal S_0$-$\A_0$, similarly as in (1) is also a coproduct complete category and $(\mathcal S_0$-$\A_0,U)$ is a concrete category over $\Set$, where $U$ is the forgetful functor. Clearly, the zero object $\{0\}$ with the zero (mono)morphism forms an initial object of the category $\mathcal S_0$-$\A_0$. Since there is exactly one (zero) morphism from an arbitrary object to the zero object, (UD1) holds true. A similar argumentation as in (1) shows that $(\mathcal S_0$-$\A_0,U)$ satisfies also the conditions (UD2)--(UD6), i.e., it is a UD-category.
\end{example}

\begin{example} \rm
	The concrete category $(\Set, {\rm id}_{\Set})$ is a trivial example of a UD-category: conditions (UD1)--(UD5) are clearly satisfied and for (UD6) note that singletons are indecomposable objects.
\end{example}

\begin{example}\rm 
 Observe that the faithful functor $U$ from the definition of UD-category need not preserve coproducts. While coproducts of the category $\mathcal S$-$\oA$ are precisely disjoint unions, which are coproducts also in $\Set$, and so the forgetful functor $U$ preserves coproducts here, coproducts in $\mathcal S_0$-$\A_0$ glue together zero elements, so they do not coincide with coproducts of the category of all sets, hence the forgetful functor $U$ does not preserve coproducts of $\mathcal S_0$-$\A_0$.
\end{example}

\section{Decompositions}

Our first step in describing UD-categories consists in showing the existence of a unique decomposition into coproduct of indecomposable objects.
Categories satisfying some version of unique decomposition property are widely studied. 
Abelian categories with unique finite coproduct decomposition into indecomposable objects are called Krull-Schmidt categories (see e.g. \cite{Kr02}) and they play distinctive role in structural module theory.

Recall an important class of categories, non-abelian in general, in which all objects posses a coproduct decomposition,
 the \emph{extensive categories}, which can be characterized as  categories $\Bc$ with (finite) coproducts which have pullbacks along colimit structural morphisms and in every commutative diagram 
\[	
	\begin{CD}
	X       @>>>    Z     @<<<  Y \\
	@VVV     @VVV       @VVV\\
	A   @>>> A \sqcup B @<<< B
	\end{CD}
\]
the squares are pullbacks if and only if the top row is a coproduct diagram in $\Bc$ (see \cite[Proposition 2.2]{CLW}, cf.also \cite{EC}).
An extensive category $\Bc$ is said to be \emph{infinitary extensive} if the coproduct diagram above is considered also for infinite coproducts.
Note that extensive categories have a wide range of applicability spanning from being a starting point for construction of distributive categories, which seem to be the correct setting for study of acyclic programs in computer science (cf. \cite{Coc}, \cite{CLW}), to the theory of elementary topoi.

\begin{proposition}\label{S-oA_ext}
	The category $S$-$\oA$ is infinitary extensive.
\end{proposition}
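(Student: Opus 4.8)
The plan is to verify infinitary extensivity directly from the concrete description of coproducts in $S$-$\oA$ as disjoint unions, using the infinitary version of the characterization in \cite[Proposition 2.2]{CLW}: a category with small coproducts and with pullbacks along coproduct coprojections is infinitary extensive if and only if its coproducts are \emph{disjoint} (the coprojections are monomorphisms and any two distinct coprojections into a coproduct have an initial pullback) and \emph{universal} (pullbacks of coproduct diagrams along arbitrary morphisms exist and are again coproduct diagrams). Since $S$-$\oA$ is complete, all the pullbacks required in this statement exist, so it suffices to establish disjointness and universality.

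First I would record that in $S$-$\oA$ a pullback is computed on underlying sets: the forgetful functor $U\colon S$-$\oA\to\Set$ creates limits, so for a coprojection $\kappa_j\colon A_j\to\coprod_{i\in I}A_i$ and an arbitrary morphism $f\colon Z\to\coprod_{i\in I}A_i$ the pullback is, up to isomorphism, the subact $f^{-1}(A_j)=\{z\in U(Z): U(f)(z)\in U(A_j)\}$ of $Z$ together with the inclusion $f^{-1}(A_j)\hookrightarrow Z$ and the corestriction of $f$; here $A_j$ is identified with the subact $\kappa_j(A_j)$ of the disjoint union $\coprod_{i\in I}A_i=\dot{\bigcup}_{i\in I} A_i$. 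Disjointness is then immediate: the coprojections are injective, hence monomorphisms, and for $j\neq k$ the subacts $A_j$ and $A_k$ of $\dot{\bigcup}_i A_i$ meet in the empty subact $\emptyset$, which is the initial object of $S$-$\oA$.

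For universality, given a family $(A_i)_{i\in I}$ and an arbitrary $f\colon Z\to\coprod_{i\in I}A_i$, I would form the pullbacks $X_i:=f^{-1}(A_i)$ as above. The subacts $X_i$ of $Z$ are pairwise disjoint (as preimages of the pairwise disjoint $A_i$) and their union is $f^{-1}\big(\bigcup_i A_i\big)=f^{-1}\big(\coprod_i A_i\big)=Z$, since the coprojections are jointly surjective; hence $Z=\dot{\bigcup}_{i\in I}X_i$, which is precisely the coproduct $\coprod_{i\in I}X_i$ in $S$-$\oA$, with the inclusions $X_i\hookrightarrow Z$ as coprojections. This gives the ``only if'' direction of the defining diagram condition. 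The ``if'' direction is the dual bookkeeping: if $Z=\coprod_{i\in I}X_i$ with maps $g_i\colon X_i\to Z$, $h_i\colon X_i\to A_i$ and $f g_i=\kappa_i h_i$, then on underlying sets $U(f)$ maps $X_i$ into $A_i$, so $X_i\subseteq f^{-1}(A_i)$, while $X_j\cap f^{-1}(A_i)=\emptyset$ for $j\neq i$ because $U(f)(X_j)\subseteq A_j$ and $A_i\cap A_j=\emptyset$; therefore $X_i=f^{-1}(A_i)$ is the pullback, as required.

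I do not expect a genuinely hard step: the only points needing care are the identification of pullbacks in $S$-$\oA$ with set-theoretic preimages (where completeness and faithfulness of $U$ enter) and keeping the two implications of the ``if and only if'' apart. I would also note the structural shortcut already visible in the excerpt: $S$-$\oA$ is the presheaf category $\Set^{\mathcal S}$ on the one-object category $\mathcal S$ determined by the monoid, hence a topos, and every such functor category is infinitary extensive because limits and colimits are computed pointwise from the infinitary extensive category $\Set$; the direct argument above is essentially the unwinding of this observation.
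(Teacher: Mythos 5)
Your proposal is correct, and its computational core coincides with the paper's: both arguments rest on the concrete facts that coproducts in $S$-$\oA$ are disjoint unions and that pullbacks are computed on underlying sets, so that pulling the coproduct components back along a morphism $f$ yields the preimages $f^{-1}(A_i)$, which are pairwise disjoint and cover the domain. The difference is one of packaging: the paper verifies the defining diagram condition directly (and in fact writes out only the direction ``pullback squares imply the top row is a coproduct''), whereas you route the verification through the equivalent ``disjoint and universal coproducts'' criterion of \cite{CLW} and then also spell out both implications of the diagram condition, including the converse that the paper's proof leaves implicit; this buys you a slightly cleaner and more complete bookkeeping at the cost of invoking one more piece of the extensivity machinery. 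Your closing observation that $S$-$\oA$ (with the empty act as initial object) is a presheaf category, hence a topos, hence infinitary extensive, is a legitimate structural shortcut that the paper does not take. The only small correction: the disjoint-plus-universal characterization is not the statement labelled Proposition 2.2 in \cite{CLW} (that is the diagrammatic one quoted in the paper), so adjust the reference accordingly.
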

\begin{proof}
Since $S$-$\oA$ is a complete category, there exists any pullback.

Let us for $i\in I$ consider the following diagram in $S$-$\oA$ with pullback squares
\[
\begin{CD}
	A_i       @>\alpha_i>>    A   \\
	@V f_i VV     @VV f V       \\
	B_i   @>\nu_i>> \coprod_{i\in I} B_i 
	\end{CD}
\]
and let us prove $A$ is a coproduct of $A_i$'s. Since we can by \cite[2.2, 2.5]{KKM} consider 
\[
A_i = \{(b, a) \in B_i \times A \, | \, \nu_i(b) = f(a)\}\ \text{ and}\  \coprod B_i = \dot{\bigcup}_{i \in I}\nu_i(B_i),
\]
 then for each $a$ there exist $i \in I, b\in B_i$ with $\nu_i(b) = f(a)$, so $\alpha_i((b,a)) = a$. In consequence $A = \bigcup \alpha_i(A_i)$. Assume $a \in \alpha_{i_0}(x_{i_0}) \cap \alpha_{j_0}(x_{j_0})$; then $f(a) = f \alpha_{i_0}(x_{i_0}) \in \nu_{i_0}(B_{i_0})$, hence $f(a) \in \nu_{i_0}(B_{i_0}) \cap \nu_{j_0}(B_{j_0}) = \emptyset$, so $\alpha_i(A_i) \cap \alpha_j(A_j) = \emptyset$ for $j \ne i$ and $A \simeq \coprod_{i\in I} A_i$.	
\end{proof}

\begin{example}\label{Z-act}
\rm
(1) Consider the monoid  of integers $\mathcal Z = (\mathbb Z,\cdot, 1)$. Then the $\mathcal Z$-act $A=2\mathbb Z\cup 3\mathbb Z$ 
shows that the category $S$-$\A_0$ is not extensive (and so it is not infinitary extensive). See the following commutative diagram with pullback squares where the top row is not a coproduct diagram:
\[
\begin{CD}
A_3       @>\alpha_3>>    2\mathbb{Z} \cup 3\mathbb{Z}    @<\alpha_2<<  A_2 \\
@V \tilde{\pi} VV     @VV\pi V       @VV \overline{\pi} V\\
\mathbb{Z}_3   @>\nu_3>> \mathbb{Z}_3 \sqcup \mathbb{Z}_2 @<\nu_2<< \mathbb{Z}_2,
\end{CD}
\]
where $A_2 = \{(0, a) \, | \, a \in 6\mathbb{Z}\} \cup \{(1, a) \, | \, a \in 3 + 6\mathbb{Z}\} $, $A_3 = \{(0, a) \, | \, a \in 6\mathbb{Z}\} \cup \{(1, a) \, | \, a \in 2 + 6\mathbb{Z}\} \cup \{(2, a) \, | \, a \in 4 + 6\mathbb{Z}\} $, $\alpha_i$ denotes projections on the second coordinate, while $\tilde{\pi}, \overline{\pi}$ on the first one. If we put $\mathbb{Z}_3 \sqcup \mathbb{Z}_2=\{(a,0)\mid a\in \mathbb{Z}_3\}\cup \{(0,b)\mid b\in \mathbb{Z}_w\}$, the morphism $\pi$ can be described by the conditions $\pi^{-1}(a,0)=2a+6\mathbb{Z}$ and  $\pi^{-1}(0,b)=3b+6\mathbb{Z}$.

(2) The category \emph{Top} of topological spaces with continuous maps is extensive, but it is not UD, since it does not satisfy the condition (UD1): the inverse of a continuous bijection need not be continuous, hence \emph{Top} is not balanced. 
\end{example}

Although UD-categories are not extensive in general, we show that all objects of a UD-category satisfy dual version of the mono-coextensivity condition introduced in the paper \cite{H}, which allows us to prove that every object can be uniquely decomposed.

We suppose in the sequel that $(\Cc,U)$ is a UD-category, i.e., a concrete category over $\Set$ satisfying all axioms (UD1)--(UD6) and the notions of objects and morphisms refer to objects and morphisms of the underlying category $\Cc$.
First, we prove a key observation that the description  of coproducts in both categories of acts \cite[Proposition II.1.8, Remark II.1.16]{KKM} can be easily unified within the context of UD-categories.

If $\Ac$ is a family of objects, the corresponding coproduct will be designated $(\coprod\Ac,(\nu_A)_{A \in \Ac})$ where $\nu_A$  is said to be the \emph{structural morphisms} of a coproduct for each $A\in\Ac$.

\begin{proposition}\label{coprod=uni}
Let $\mathcal A=(A_i)_{i\in I}$ be a set of objects and $(\coprod_j A_j, (\nu_i)_{i\in I})$ be a coproduct of $\mathcal A$. Then there exist an inclusion morphism  $\iota_\cup\in\Mor(\bigcup_{i\in I}A_i^{\nu_i}, \coprod_{i\in I} A_i)$ and  morphisms $\mu_j\in\Mor(A_j,\bigcup_{i\in I}A_i^{\nu_i})$  satisfying $\iota_\cup\mu_j = \nu_j$ for each $j$. Furthermore, $(\bigcup_iA_i^{\nu_i}, (\mu_i)_i)$ is a coproduct of $\mathcal A$ and $U(\coprod_{i\in I} A_i)=\bigcup_{i\in I}U(A_i^{\nu_i})$.
\end{proposition}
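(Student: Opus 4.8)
The plan is to first produce the inclusion morphism $\iota_\cup$ and the corrected structural morphisms $\mu_j$ directly from the UD-axioms, and then verify the coproduct universal property by hand using faithfulness of $U$. For the first part, I would apply (UD4) to the family of subobjects $(A_i^{\nu_i},\iota_i)$ of $\coprod_i A_i$ obtained from (UD2) applied to each $\nu_i$; note that by Lemma~\ref{subobject} each $\nu_i$ then factors as $\nu_i=\iota_i\widetilde{\nu_i}$ with $\widetilde{\nu_i}$ an isomorphism $A_i\to A_i^{\nu_i}$. Axiom (UD4) supplies the inclusion morphism $\iota_\cup\in\Mor(\bigcup_i A_i^{\nu_i},\coprod_i A_i)$ with $U(\iota_\cup)$ the subset inclusion. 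To build $\mu_j$, observe that $U(\nu_j)(U(A_j))=U(A_j^{\nu_j})\subseteq\bigcup_i U(A_i^{\nu_i})=U(\iota_\cup)(U(\bigcup_i A_i^{\nu_i}))$, so (UD3) applied to $\nu_j$ and the subobject $(\bigcup_i A_i^{\nu_i},\iota_\cup)$ of $\coprod_i A_i$ yields $\mu_j\in\Mor(A_j,\bigcup_i A_i^{\nu_i})$ with $\iota_\cup\mu_j=\nu_j$, exactly as claimed.

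Next I would establish that $U(\coprod_i A_i)=\bigcup_i U(A_i^{\nu_i})$, i.e. that $U(\iota_\cup)$ is surjective, equivalently (by (UD4) and Lemma~\ref{mono-epi}(3)) that $\iota_\cup$ is an isomorphism. This is where the only real content lies. One direction is trivial: $\bigcup_i U(A_i^{\nu_i})\subseteq U(\coprod_i A_i)$. For the reverse, I would use (UD6): each $x\in U(\coprod_i A_i)$ lies in $U(B^\rho)$ for some indecomposable subobject $(B,\rho)$ of $\coprod_i A_i$. The idea is to show such a $B$ must be ``swallowed'' by a single $A_i^{\nu_i}$. One would form, for each $i$, the subobject $B\cap A_i^{\nu_i}$ (via (UD4) applied to the two subobjects $B^\rho$ and $A_i^{\nu_i}$ of $\coprod_i A_i$, then pulled back into $B$ using (UD3)); these give a family of subobjects of $B$ whose union — again by (UD4) — is all of $B$, since $U(B^\rho)\subseteq U(\coprod_i A_i)$ and the right-hand side is the set-theoretic union of the $U(A_i^{\nu_i})$ over $i$ (this last fact is what we are trying to prove, so care is needed — actually one only needs $U(B^\rho)\subseteq\bigcup_i U(A_i^{\nu_i})$ restricted to the image, which does hold because every element of the coproduct comes from some component; this is the genuinely category-specific step and is where I expect to have to be careful about whether it follows purely formally from (UD1)--(UD6) or needs an auxiliary argument). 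Then indecomposability of $B$, together with a pairwise-disjointness statement for the $B\cap A_i^{\nu_i}$ coming from (UD5), forces exactly one of the $B\cap A_i^{\nu_i}$ to be non-initial and equal to $B$, so $U(B^\rho)\subseteq U(A_i^{\nu_i})$ for that $i$; hence $x\in U(A_i^{\nu_i})$.

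Finally, with $U(\iota_\cup)$ bijective, Lemma~\ref{mono-epi}(3) makes $\iota_\cup$ an isomorphism, and transporting the coproduct structure $(\nu_i)$ along $\iota_\cup^{-1}$ shows that $(\bigcup_i A_i^{\nu_i},(\mu_i)_i)$ is a coproduct of $\mathcal A$: indeed $\mu_i=\iota_\cup^{-1}\nu_i$, and precomposition with an isomorphism preserves the universal property. The main obstacle, as indicated, is the surjectivity of $U(\iota_\cup)$: one must extract from the axioms — chiefly (UD6) and the disjointness in (UD5) — that every ``point'' of a coproduct object lies in (the image of) one of the canonical component subobjects, an assertion that is obvious in $S$-$\oA$ and $S$-$\A_0$ but has to be squeezed out of the abstract framework. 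Everything else is bookkeeping with faithfulness of $U$ and the factorization lemmas already proved.
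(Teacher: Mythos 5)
There is a genuine gap at the step you yourself flag as the ``only real content'': the surjectivity of $U(\iota_\cup)$. Your route via (UD6) is circular. To run it you need to know that the subobjects $B^\rho\cap A_i^{\nu_i}$ cover $B$, and as you admit this requires $U(B^\rho)\subseteq\bigcup_i U(A_i^{\nu_i})$, which you justify by ``every element of the coproduct comes from some component'' --- but that assertion \emph{is} the inclusion $U(\coprod_i A_i)\subseteq\bigcup_i U(A_i^{\nu_i})$ you are trying to prove. It does not follow from (UD1)--(UD6) by pointwise inspection; nothing in the axioms says a priori what the underlying set of a coproduct looks like (indeed the paper stresses that $U$ need not preserve coproducts). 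Moreover, the ``indecomposable $B$ is swallowed by one component'' step would need the decomposition machinery (Proposition~\ref{decomp3}, Lemma~\ref{subobjects-decomp}), which in the paper is derived \emph{from} this proposition, so the dependency would run backwards.

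The fix is purely formal and does not use (UD6) at all. Having built $\iota_\cup$ and the $\mu_j$ exactly as you do, apply the universal property of the coproduct $(\coprod_i A_i,(\nu_i)_i)$ to the cocone $(\mu_j)_j$ to obtain $\varphi\in\Mor(\coprod_i A_i,\bigcup_i A_i^{\nu_i})$ with $\varphi\nu_j=\mu_j$ for all $j$. Then $\iota_\cup\varphi\nu_j=\iota_\cup\mu_j=\nu_j$ for every $j$, so the uniqueness clause of the universal property forces $\iota_\cup\varphi=\id_{\coprod_i A_i}$. Hence $\iota_\cup$ is a split epimorphism; being an inclusion morphism it is also a monomorphism, so by (UD1) (the category is balanced) it is an isomorphism, and $\varphi=\iota_\cup^{-1}$. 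This yields $U(\coprod_i A_i)=\bigcup_i U(A_i^{\nu_i})$ and the transported coproduct structure $(\mu_i)_i=(\varphi\nu_i)_i$ in one stroke. Your first paragraph and your final ``transport along $\iota_\cup^{-1}$'' paragraph are correct and match the paper; only the middle argument needs to be replaced by this retraction trick.
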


\begin{proof} Note that the inclusion morphism  $\iota_\cup\in\Mor(\bigcup_iA_i^{\nu_i}, \coprod A_i)$ exists by (UD4) and  morphisms $\mu_j\in\Mor(A_j,\bigcup_iA_i^{\nu_i})$ with $\iota_\cup\mu_j = \nu_j$ exist by (UD3) for each $j$. Using the universal property of a coproduct, we obtain a morphism $\varphi$ such that $\varphi\nu_j = \mu_j$ for every $j$, i.e. the left square of the diagram
\[
\begin{CD}
A_j @>\nu_j>>  \coprod A_i \, @=\coprod A_i\\
@| @VV\varphi V   @AA \iota_\cup A  \\
A_j    @>\mu_{j}>> \bigcup_iA_i^{\nu_i} @= \bigcup_iA_i^{\nu_i}
\end{CD}
\]
commutes in $\mathcal{C}$ and we will show that the right square commutes as well.

Since $\iota_\cup\varphi\nu_j = \iota_\cup\mu_j= \nu_j$ for each $j$, we get again 
by the colimit universal property that $\iota_\cup\varphi = \id_{\coprod A_i}$. Since $\varphi$ has a left inverse which is a monomorphisms, both
$\iota_\cup$ and $\varphi$ are isomorphisms. Hence $U(\coprod A_i)=U(\iota_\cup)(\bigcup_i A_i^{\nu_i})=\bigcup_iU(A_i^{\nu_i})$ and  $(\bigcup_iA_i^{\nu_i}, (\mu_i)_i)=(\bigcup_iA_i^{\nu_i}, (\varphi\nu_i)_i)$ is a coproduct of the family $\Ac$.
\end{proof}

Let $A$ be an object, and $(A_j,\iota_j)$ be subobjects such that $\iota_j$ is the inclusion morphism for each $j\in J$. We say that $((A_j,\iota_j),j\in J)$ is a {\it decomposition} of $A$ if $(A, (\iota_j)_{j\in J})$ is a coproduct of the family $(A_j,j\in J )$. 
Note that we have defined the decomposition for sets of subobjects with inclusion morphisms mainly for clarity of exposition: had we defined it for general subobjects, we would have got the same result using (UD2) and Lemma~\ref{subobject} afterward.

The following easy consequence of Proposition~\ref{coprod=uni} describes a natural decomposition of  a coproduct in $\mathcal{C}$.

\begin{corollary}\label{decomp1} Let $\mathcal A=(A_j,j\in J)$ be a family of objects and $(A, (\nu_j)_j)$ be a coproduct of $\mathcal A$. Then for each $j\in J$ there exists an inclusion morphism $\iota_j\in\Mor(A_j^{\nu_j}, A)$ such that $((A_j^{\nu_j}, \iota_j),j\in J)$ forms a decomposition of $A$.
\end{corollary}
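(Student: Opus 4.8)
The plan is to derive Corollary~\ref{decomp1} as a direct application of Proposition~\ref{coprod=uni}. Given the family $\mathcal A=(A_j,j\in J)$ and the coproduct $(A,(\nu_j)_j)$, Proposition~\ref{coprod=uni} provides, for each $j$, the subobject $(A_j^{\nu_j},\iota)$ from (UD2) together with a morphism $\mu_j\in\Mor(A_j,\bigcup_iA_i^{\nu_i})$ satisfying $\iota_\cup\mu_j=\nu_j$, and it asserts that $(\bigcup_iA_i^{\nu_i},(\mu_i)_i)$ is itself a coproduct of $\mathcal A$ with $\iota_\cup\colon\bigcup_iA_i^{\nu_i}\to A$ an isomorphism (indeed an inclusion morphism). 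So up to the canonical identification $\iota_\cup$, the object $A$ already \emph{is} the union $\bigcup_iA_i^{\nu_i}$.

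First I would fix $j\in J$ and define $\iota_j:=\iota_\cup\circ\iota^{(j)}$, where $\iota^{(j)}\in\Mor(A_j^{\nu_j},\bigcup_iA_i^{\nu_i})$ is the inclusion morphism of the subobject $A_j^{\nu_j}$ inside $\bigcup_iA_i^{\nu_i}$ (this exists by (UD4) applied to the subobject $A_j^{\nu_j}$ of $\bigcup_iA_i^{\nu_i}$, or directly since on underlying sets $U(A_j^{\nu_j})=U(\nu_j)(U(A_j))\subseteq\bigcup_iU(A_i^{\nu_i})=U(\bigcup_iA_i^{\nu_i})$). Since $U(\iota_\cup)$ and $U(\iota^{(j)})$ are both subset inclusions, $U(\iota_j)$ is a subset inclusion, and $\iota_j$ is a monomorphism by Lemma~\ref{mono-epi}(1); hence $\iota_j$ is an inclusion morphism in the sense of the paper. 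Next I would check the factorization $\iota_j\,\tilde\nu_j=\nu_j$, where $\tilde\nu_j\in\Mor(A_j,A_j^{\nu_j})$ is the isomorphism supplied by Lemma~\ref{subobject} (applied to the subobject $(A_j,\nu_j)$ of $A$): on underlying sets $U(\iota_j)U(\tilde\nu_j)(U(A_j))=U(\nu_j)(U(A_j))$ and faithfulness of $U$ forces $\iota_j\tilde\nu_j=\nu_j$. Therefore each $\nu_j$ factors through the inclusion $\iota_j$ via the isomorphism $\tilde\nu_j$.

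Finally I would transport the coproduct structure along these isomorphisms. Since $(A,(\nu_j)_j)$ is a coproduct of $(A_j,j\in J)$ and each $\tilde\nu_j\colon A_j\to A_j^{\nu_j}$ is an isomorphism, precomposing the coprojections with the $\tilde\nu_j^{-1}$ shows $(A,(\nu_j\tilde\nu_j^{-1})_j)=(A,(\iota_j)_j)$ is a coproduct of the family $(A_j^{\nu_j},j\in J)$: replacing the objects of a coproduct diagram by isomorphic copies (and the coprojections by the corresponding composites) again yields a coproduct, which is a routine check from the universal property. By the definition of \emph{decomposition} given just before the statement, this says exactly that $((A_j^{\nu_j},\iota_j),j\in J)$ is a decomposition of $A$.

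I do not expect any genuine obstacle here; the statement is a packaging of Proposition~\ref{coprod=uni} together with the bookkeeping of (UD2)/(UD4) inclusions and Lemma~\ref{subobject}. The only point requiring a little care is making sure the composite $\iota_j=\iota_\cup\iota^{(j)}$ is literally an inclusion morphism (i.e.\ that $U(\iota_j)$ is the subset inclusion of $U(A_j^{\nu_j})$ into $U(A)$, using the identification $U(A)=\bigcup_iU(A_i^{\nu_i})$ furnished by Proposition~\ref{coprod=uni}), and that replacing objects in a coproduct cocone by isomorphic ones preserves the coproduct property — both of which are immediate from faithfulness of $U$ and the universal property of coproducts respectively.
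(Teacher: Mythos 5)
Your proof is correct and takes essentially the approach the paper intends: Corollary~\ref{decomp1} is given there without a written proof as an ``easy consequence'' of Proposition~\ref{coprod=uni}, and your derivation --- identifying $A$ with $\bigcup_i A_i^{\nu_i}$ via $\iota_\cup$, factoring $\nu_j=\iota_j\tilde\nu_j$ through the isomorphism of Lemma~\ref{subobject}, and transporting the coproduct cocone along the $\tilde\nu_j$ --- is exactly that. The only quibble is that deducing $\iota_j\tilde\nu_j=\nu_j$ from faithfulness requires equality of the underlying \emph{maps} rather than of their images, which does hold here because $U(\iota_j)$ is a subset inclusion and $U(\tilde\nu_j)$ is the corestriction of $U(\nu_j)$.
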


It is well-known that there is a canonical isomorphism    
$\coprod_{i\in I}\left( \coprod \mathcal A_i\right) \cong \coprod\left( \bigcup_{i\in I} \mathcal A_i\right) $
for every family of sets of objects $\mathcal A_i$, $i\in I$
in any  coproduct-complete category with an initial object $\theta$, hence $\theta\sqcup A\cong A$.
Furthermore, let us formulate elementary but useful property of decompositions.

\begin{corollary}\label{decomp2} 
Let $A$ be an object and $\left( \mathcal{A}_i, i \in I\right)$ a family of sets of subobjects $(C,\iota_C)$ of $A$ such that $\iota_C$ is the inclusion morphism and let $B_i=\bigcup_{(C,\iota_C)\in\Ac_i}C^{\iota_C}$ and  $\iota_i$ be the inclusion morphism ensured by (UD4) for each $i\in I$.
The following conditions are equivalent:
\begin{enumerate}
\item For each $i\in I$, $\mathcal{A}_i$ forms a decomposition of the object $B_i$ and  $\left((B_i,\iota_i), i \in I\right) $ is a decomposition of $A$,
\item $\dot{\bigcup}_{i \in I} \mathcal{A}_i$ is a decomposition of $A$.
\end{enumerate}
\end{corollary}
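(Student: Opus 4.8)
The plan is to reduce everything to the associativity of coproducts recalled just before the statement, $\coprod_{i\in I}(\coprod\mathcal{A}_i)\cong\coprod(\bigcup_{i\in I}\mathcal{A}_i)$, the only non-formal ingredient being the identification of the abstract coproduct of $\mathcal{A}_i$ with the concrete subobject $B_i$. Two bookkeeping facts will be used throughout: a composite of inclusion morphisms is again an inclusion morphism, and by faithfulness of $U$ a morphism is determined by its underlying map, so that $\iota_C$ is the unique inclusion morphism $C\to A$ and, more generally, two morphisms sharing the same underlying subset inclusion coincide. I will also use that every structural morphism of a coproduct is a monomorphism: splitting a family into one distinguished index and its complement presents such a morphism, via the associativity isomorphism, as a structural morphism of a binary coproduct, hence a monomorphism by (UD5).

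For $(1)\Rightarrow(2)$ I assume $(A,(\iota_i)_{i\in I})$ is a coproduct of $(B_i)_i$ and that, for each $i$, $B_i$ with inclusion morphisms $\iota^i_C\colon C\to B_i$ is a coproduct of $\mathcal{A}_i$. Plugging the inner coproducts into the outer one, associativity yields that $A$ with the morphisms $\iota_i\iota^i_C$, indexed by $\dot{\bigcup}_i\mathcal{A}_i$, is a coproduct; since each $\iota_i\iota^i_C$ is an inclusion morphism whose underlying map is the inclusion $U(C)\subseteq U(B_i)\subseteq U(A)$, faithfulness forces $\iota_i\iota^i_C=\iota_C$, so $\dot{\bigcup}_i\mathcal{A}_i$ decomposes $A$.

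For $(2)\Rightarrow(1)$ I would introduce the abstract coproduct $D_i=\coprod_{(C,\iota_C)\in\mathcal{A}_i}C$, which exists since $\mathcal{C}$ is coproduct complete, with structural morphisms $\sigma^i_C$. By associativity of coproducts $A$ is then a coproduct of the family $(D_i)_i$ via structural morphisms $\tau_i\colon D_i\to A$ satisfying $\tau_i\sigma^i_C=\iota_C$. The key step is to identify $D_i$ with $B_i$: Proposition~\ref{coprod=uni} computes $U(D_i)$ as $\bigcup_{C}U(\sigma^i_C)(U(C))$, so the image of $U(\tau_i)$ is exactly $U(B_i)=U(\iota_i)(U(B_i))$; hence (UD3) produces $g_i\colon D_i\to B_i$ with $\tau_i=\iota_ig_i$, and a short argument using injectivity of $U(\iota_i)$, the monomorphicity of $\tau_i$, Lemma~\ref{mono-epi} and balancedness of $\mathcal{C}$ shows $g_i$ is an isomorphism. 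Transporting the two coproduct structures along $g_i$ then yields both assertions of (1): $A$ is a coproduct of $(B_i)_i$ via $\tau_ig_i^{-1}=\iota_i$, so $((B_i,\iota_i),i\in I)$ decomposes $A$; and $B_i$ is a coproduct of $\mathcal{A}_i$ via the morphisms $g_i\sigma^i_C$, which are inclusion morphisms because $\iota_ig_i\sigma^i_C=\iota_C$ together with $U(\iota_i)$ being a subset inclusion forces their underlying maps to be the inclusions $U(C)\subseteq U(B_i)$, so $\mathcal{A}_i$ decomposes $B_i$.

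The steps I am suppressing are the purely formal compatibility of the associativity isomorphism with structural morphisms and the routine checks that composites and restrictions of inclusion morphisms stay inclusion morphisms. The one delicate point — the main obstacle — is the passage from the abstract coproduct $D_i$, which a priori lies outside $A$, to the union subobject $B_i$ inside $A$; this is precisely where Proposition~\ref{coprod=uni}, axiom (UD3) and balancedness of $\mathcal{C}$ enter, through the comparison morphism $\tau_i$.
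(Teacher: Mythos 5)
Your argument is correct and follows exactly the route the paper intends: the corollary is stated right after the associativity isomorphism $\coprod_{i\in I}(\coprod\mathcal A_i)\cong\coprod(\bigcup_{i\in I}\mathcal A_i)$ and is meant to follow from it together with Proposition~\ref{coprod=uni}, (UD3) and balancedness, which is precisely the machinery you deploy to identify the abstract coproduct $D_i$ with the union subobject $B_i$. Your write-up simply makes explicit the comparison-morphism details that the paper leaves implicit.
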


As the morphism of an initial object to an arbitrary object 
is a monomorphism by (UD1), for every object $A$, there exists a subobject denoted by $(\theta_A,\vartheta_A)$ with the inclusion morphism $\vartheta_A$ and $\theta_A\cong\theta$. Note that an initial object $\theta$ has no proper subobject.

It is not a priori clear that the existence of different (even though necessarily isomorphic) initial objects will not cause obstacles. However, the following observations, that  will also turn out to be useful for further dealing with decompositions of objects in a general UD-category, show that the situation is favourable. 

\begin{lemma}\label{ZeroSubobj}
If $A$ is an object and  $(S,\iota)$ is a subobject of $A$ with the inclusion morphism $\iota$, then 
\begin{enumerate}
\item[(1)] $U(\theta_S)=U(\theta_A)$,
\item[(2)] $S\cong\theta$ if and only if $U(S)=U(\theta_A)$ if and only if $U(S)\subseteq U(\theta_A)$.
\end{enumerate}
\end{lemma}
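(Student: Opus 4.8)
The plan is to prove both statements by exploiting the characterization of initial objects via the cardinality of their image under $U$, together with the transitivity of subobjects and the axioms (UD2), (UD3). First I would observe that $\theta_S$ is a subobject of $S$ with inclusion morphism $\vartheta_S$, and that the composite $\iota\vartheta_S$ is a monomorphism (composites of monomorphisms are monomorphisms), so $(\theta_S,\iota\vartheta_S)$ is a subobject of $A$ with $\theta_S\cong\theta$. Since $\iota$ is an inclusion morphism, $U(\iota\vartheta_S)=U(\iota)U(\vartheta_S)$ is the subset inclusion of $U(\theta_S)$ into $U(A)$, so $(\theta_S,\iota\vartheta_S)$ is a ``copy of $\theta$ inside $A$''. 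To conclude $U(\theta_S)=U(\theta_A)$ it suffices to show that any two subobjects of $A$ isomorphic to $\theta$ have the same image under $U$; this follows because by (UD1) there is exactly one morphism $\theta\to\theta$ (so $\theta$ has a single element or is empty — in any case $|U(\theta)|\le 1$, since $U$ is faithful and $\id_\theta$ is the unique endomorphism, forcing $U(\theta)$ to have at most one element), and by the universal property of $\theta$ there is a morphism from $\theta_S$ to $\theta_A$ over $A$, which by Lemma~\ref{mono-epi}(1)–(3) and balancedness is an isomorphism; hence $U(\theta_S)$ and $U(\theta_A)$ are subsets of $U(A)$ of the same cardinality ($\le 1$) related by an inclusion-compatible isomorphism, so they coincide. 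This gives (1).

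For (2), the chain of equivalences is: $S\cong\theta$ $\iff$ $U(S)=U(\theta_A)$ $\iff$ $U(S)\subseteq U(\theta_A)$. The implication $S\cong\theta \Rightarrow U(S)=U(\theta_S)=U(\theta_A)$ is immediate from (1) once one notes $U(S)=U(\theta_S)$ when $S$ itself is initial (apply (1) with the subobject being $(S,\iota)$ and using that $\theta_S\cong S$, or more directly: if $S\cong\theta$ then $\id_S$ is the only endomorphism and $U(S)=U(\theta_S)$). The implication $U(S)=U(\theta_A)\Rightarrow U(S)\subseteq U(\theta_A)$ is trivial. For the remaining implication $U(S)\subseteq U(\theta_A)\Rightarrow S\cong\theta$: since $|U(\theta_A)|\le 1$, we get $|U(S)|\le 1$; if $U(S)=\emptyset$ then $S$ has the same image as the empty-looking initial object and, by faithfulness and (UD1), $S\cong\theta$ (there is a morphism $\theta\to S$, and also one $S\to\theta$ since $U(S)=\emptyset\subseteq U(\theta_A)$ combined with (UD3) applied to produce a morphism into $\theta_A\cong\theta$, and balancedness finishes it); if $|U(S)|=1$ then $U(S)=U(\theta_A)$, and applying (UD3) to the inclusion $\iota\colon S\to A$ and the subobject $(\theta_A,\vartheta_A)$ (whose image contains $U(\iota)(U(S))$) yields a morphism $g\colon S\to\theta_A$ with $\iota=\vartheta_A g$; then $U(g)$ is a bijection, so $g$ is an isomorphism by Lemma~\ref{mono-epi}(3), whence $S\cong\theta_A\cong\theta$.

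The main technical point — and the only step that needs care — is handling the two cases $|U(\theta)|=0$ and $|U(\theta)|=1$ uniformly, i.e.\ making sure the argument does not secretly assume $\theta$ is the empty object (as in $S$-$\oA$) versus a singleton (as in $S$-$\A_0$). The clean way is to never use the cardinality explicitly but only the universal property from (UD1) plus (UD3) to manufacture the comparison morphisms between $S$ (or $\theta_S$) and $\theta_A$, and then invoke Lemma~\ref{mono-epi}(3) and balancedness to upgrade to isomorphisms; the set-level equalities of images then drop out because $U$ of an isomorphism is a bijection and all these images are subsets of the fixed set $U(A)$. Everything else is a routine diagram chase through (UD2)–(UD3) and Lemmas~\ref{mono-epi}, \ref{subobject}.
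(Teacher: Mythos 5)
Your argument for (1) and for the two easy implications of (2) is essentially sound and close in spirit to the paper's: the unique isomorphism $\tau\colon\theta_S\to\theta_A$ between the two initial subobjects satisfies $\vartheta_A\tau=\iota\vartheta_S$ by initiality of $\theta_S$, and since both sides become subset inclusions under $U$, the map $U(\tau)$ is the identity and $U(\theta_S)=U(\theta_A)$; the cardinality remark there is decorative and can be deleted. The genuine gap is in the step you flag as ``the only step that needs care'': the claim that $|U(\theta)|\le 1$ follows from faithfulness of $U$ together with $\Mor(\theta,\theta)=\{\id_\theta\}$. This is a non sequitur. Faithfulness says $U$ is \emph{injective} on hom-sets, not full, so nothing prevents a concrete category from having an object with a large underlying set and only the identity endomorphism. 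The bound $|U(\theta)|\le 1$ holds in both motivating examples but is nowhere among the axioms, and your proof of the implication $U(S)\subseteq U(\theta_A)\Rightarrow S\cong\theta$ genuinely leans on it: your case $|U(S)|=1$ concludes $U(S)=U(\theta_A)$ only because $|U(\theta_A)|\le 1$, and without that you only get $U(g)$ injective, so Lemma~\ref{mono-epi}(3) does not apply.

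The gap is repairable without the cardinality claim, in two ways. The paper argues by contraposition: if $S\not\cong\theta$, then $\vartheta_S$ is a monomorphism that is not an epimorphism (else balancedness would make it an isomorphism), so $U(\vartheta_S)$ is not surjective by the contrapositive of Lemma~\ref{mono-epi}(2), whence $U(\theta_A)=U(\theta_S)\subsetneq U(S)$ by (1). Alternatively, your direct route can be salvaged uniformly (no case split on $|U(S)|$): (UD3) gives $g\colon S\to\theta_A$ with $\vartheta_A g=\iota$, so $g$ is a monomorphism; initiality of $\theta_A$ gives $u\colon\theta_A\to S$ with $gu=\id_{\theta_A}$ (the unique endomorphism of an initial object), so $g$ is a split epimorphism as well, hence an isomorphism. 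Either repair should replace the faithfulness argument.
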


\begin{proof} (1) Since there exists the unique isomorphism $\tau:\theta_S\to \theta_A$ and both $\iota\vartheta_S\tau=\vartheta_A$ and $\iota\vartheta_S=\vartheta_A\tau^{-1}$ are inclusion morphisms, we get the equality $U(\theta_S)=U(\theta_A)$.

(2) If $S\cong\theta$, then  $U(S)=U(\theta_S)=U(\theta_A)$.
The implication  $U(S)=U(\theta_A)$ $\Rightarrow$ $U(S)\subseteq U(\theta_A)$ is clear and, proving indirectly, 
suppose that $S$ is not isomorphic to $\theta$. Then $\vartheta_S$ is a monomorphism by (UD1) which is not an epimorphism. Hence $U(\vartheta_S)$ is not surjective by Lemma~\ref{mono-epi}(2) and so $U(\theta_A)=U(\theta_S)\subsetneq U(S)$ by (1). We have proved that $U(S)\nsubseteq U(\theta_A)$.
\end{proof}
 
Now, we formulate a natural description of decompositions of objects  and of its subobjects.
 
\begin{proposition}\label{decomp3}
Let $A$ be an object and $(A_j,\iota_j)$ be a subobject of $A$ with the inclusion morphism $\iota_j$ into $A$ for every $j\in J$.
Then $((A_j,\iota_j),j\in J)$ is a decomposition of $A$ if and only if
$U(A)=\bigcup_{j}U(A_j)$ and $U(A_i)\cap\bigcup_{j\ne i}U(A_j)=U(\theta_A)$ for each $i\in J$.
\end{proposition}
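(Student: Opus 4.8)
\textbf{Plan for the proof of Proposition~\ref{decomp3}.}

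The proof splits into the two implications. For the forward direction, suppose $((A_j,\iota_j),j\in J)$ is a decomposition of $A$, i.e.\ $(A,(\iota_j)_j)$ is a coproduct of $(A_j,j\in J)$. By Proposition~\ref{coprod=uni} applied to this coproduct we get $U(A)=\bigcup_j U(A_j^{\iota_j})=\bigcup_j U(A_j)$, since each $\iota_j$ is an inclusion morphism so $U(A_j^{\iota_j})=U(\iota_j)(U(A_j))=U(A_j)$. This settles the first equation. For the second, fix $i\in J$ and split the coproduct as in Corollary~\ref{decomp2}: write $B=\bigcup_{j\ne i}A_j^{\iota_j}$ with its inclusion morphism $\iota$ from (UD4); then $(A,(\iota_i,\iota))$ is a coproduct of the pair $(A_i,B)$ (this is precisely the associativity of coproducts / Corollary~\ref{decomp2} with the two-block partition $\{i\}$ and $J\setminus\{i\}$). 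Now (UD5) applies: $A_i^{\iota_i}\cap B^{\iota}$ is isomorphic to $\theta$, and by Lemma~\ref{ZeroSubobj}(2) this forces $U(A_i^{\iota_i}\cap B^{\iota})=U(\theta_A)$. By (UD4) the left side equals $U(A_i)\cap U(B)=U(A_i)\cap\bigcup_{j\ne i}U(A_j)$, which gives the claim.

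For the converse, assume $U(A)=\bigcup_j U(A_j)$ and $U(A_i)\cap\bigcup_{j\ne i}U(A_j)=U(\theta_A)$ for all $i$. I must show $(A,(\iota_j)_j)$ satisfies the universal property of the coproduct. Given an object $D$ and morphisms $g_j\colon A_j\to D$, I define the underlying map $h\colon U(A)\to U(D)$ by: for $x\in U(A)$ pick $j$ with $x\in U(A_j)$ and set $h(x)=U(g_j)(x)$. Well-definedness is the first thing to check: if $x\in U(A_i)\cap U(A_j)$ with $i\ne j$, then $x\in U(\theta_A)$, so $x\in U(\theta_{A_i})$ and $x\in U(\theta_{A_j})$ by Lemma~\ref{ZeroSubobj}(1); since $\theta_{A_i}\cong\theta$ is initial, $g_i$ restricted to $\theta_{A_i}$ is the unique morphism $\theta_{A_i}\to D$, and likewise for $g_j$, and these agree after composing with the (unique up to the relevant identifications) isomorphism $\theta_{A_i}\cong\theta_{A_j}$, so $U(g_i)(x)=U(g_j)(x)$. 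Hence $h$ is a well-defined set map with $h\circ U(\iota_j)=U(g_j)$ for each $j$. Uniqueness of such an $h$ as a set map is immediate from $U(A)=\bigcup_j U(A_j)$ and faithfulness of $U$.

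The real work is promoting $h$ to a morphism $\bar h\colon A\to D$ in $\Cc$ with $U(\bar h)=h$. Here I would use (UD2) to factor: consider the \emph{graph} or, more simply, apply the machinery available. One clean route: by (UD2)/(UD3) each $g_j$ factors through the subobject $(D^{g_j},\iota)$ of $D$; taking $E=\bigcup_j D^{g_j}$ via (UD4) gives a subobject $(E,\iota_E)$ of $D$ with $U(E)=\bigcup_j U(g_j)(U(A_j))=h(U(A))$, and by (UD3) each $g_j$ corestricts to $\tilde g_j\colon A_j\to E$. It then suffices to build a morphism $A\to E$; composing with $\iota_E$ finishes. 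So I may as well assume $D=E$, i.e.\ $U(D)=h(U(A))$, hence $h$ is surjective. At this point I would like to invoke the universal property again — but that is circular. Instead, the cleanest argument uses the coproduct $C:=\coprod_j A_j$ that \emph{does} exist by (UD1): the $g_j$ induce a genuine morphism $g\colon C\to D$, and the $\iota_j$ induce a morphism $p\colon C\to A$. By Corollary~\ref{decomp1} and Proposition~\ref{coprod=uni}, $U(C)=\bigcup_j U(A_j^{\nu_j})$ with the $A_j^{\nu_j}$ pairwise meeting only in $U(\theta_C)$; the hypothesis on the $U(A_j)\subseteq U(A)$ together with Lemma~\ref{ZeroSubobj} shows that $U(p)\colon U(C)\to U(A)$ is a bijection (surjective since $U(A)=\bigcup U(A_j)$; injective because the only overlaps in $U(C)$ sit over $U(\theta_C)$, which $U(p)$ maps to $U(\theta_A)$, and the hypothesis says the images $U(A_j)$ in $U(A)$ overlap only there). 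By Lemma~\ref{mono-epi}(3), $p$ is an isomorphism, and $\bar h:=g\circ p^{-1}\colon A\to D$ is the required morphism, with $U(\bar h)=h$ by construction and hence $\bar h\circ\iota_j=g_j$ by faithfulness. Thus $(A,(\iota_j)_j)$ is a coproduct of $(A_j)_j$, i.e.\ a decomposition.

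\textbf{Main obstacle.} The delicate point is the converse direction, specifically verifying that $U(p)\colon U(\coprod_j A_j)\to U(A)$ is injective: this is exactly where the condition $U(A_i)\cap\bigcup_{j\ne i}U(A_j)=U(\theta_A)$ is used, and one must be careful that ``the only overlaps are initial'' on the $\coprod$-side (guaranteed by (UD5) via Corollary~\ref{decomp1}) matches ``the only overlaps are $U(\theta_A)$'' on the $A$-side, using Lemma~\ref{ZeroSubobj} to identify all the relevant copies of the initial object. The well-definedness of $h$ on the overlaps — again an initial-object bookkeeping issue handled by Lemma~\ref{ZeroSubobj}(1) — is the other spot requiring care but is routine once that lemma is in hand.
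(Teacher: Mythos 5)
Your proof is correct and follows essentially the same route as the paper's: the forward direction via Proposition~\ref{coprod=uni}, Corollary~\ref{decomp2}, (UD5) and Lemma~\ref{ZeroSubobj}, and the converse by showing that the canonical morphism $\coprod_j A_j\to A$ induced by the $\iota_j$ has bijective underlying map and is therefore an isomorphism by Lemma~\ref{mono-epi}(3). The preliminary detour through the set map $h$ is unnecessary, but you correctly discard it in favour of the comparison-morphism argument, which is exactly what the paper does.
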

\begin{proof} Let $((A_j,\iota_j),j\in J)$ be a decomposition of $A$.
Then $U(A)=\bigcup_{j}U(\iota_j)U(A_j)=\bigcup_{j}U(A_j)$ by Proposition~\ref{coprod=uni}. Since 
$((A_i,\iota_i),(\bigcup_{j\ne i}A_j^{\iota_j},\iota))$, where $\iota\in\Mor(\bigcup_{j\ne i}A_j^{\iota_j},A)$ is the inclusion morphism, forms a decomposition of $A$ by Corollary~\ref{decomp2}, (UD3) and (UD4), we get that  $A_i^{\iota_i}\cap(\bigcup_{j\ne i}A_j^{\iota_j})^{\iota}\cong\theta$ by (UD5). Thus $U(A_i)\cap\bigcup_{j\ne i}U(A_j)=U(A_i^{\iota_i})\cap U(\bigcup_{j\ne i}A_j^{\iota_j})^{\iota})=U(\theta_A)$ by Lemma~\ref{ZeroSubobj}.

In order to prove the reverse implication, let us suppose that $U(A)=\bigcup_{j}U(A_j)$ and $U(A_i)\cap\bigcup_{j\ne i}U(A_j)=U(\theta_A)$ for each $i\in J$
and $(\coprod_j A_j, (\nu_j)_{j\in J})$ is a coproduct of the family $(A_j,j\in J)$.
Then there exists a morphism $\varphi\in\Mor(\coprod_j A_j, A)$
such that $\varphi\nu_j=\iota_j$ for all $j$ by the universal property of a coproduct. Since $U(A)=\bigcup_{j}U(A_j)\subseteq U(\varphi)(U(\coprod_j A_j))$, the mapping $U(\varphi)$ is surjective. Let $U(\varphi)(a)=U(\varphi)(b)$ for elements $a,b\in U(\coprod_j A_j)$.
Then there are indexes $j_0,j_1\in J$ and elements $\tilde{a}\in U(A_{j_0})$, $\tilde{b}\in U(A_{j_1})$ for which $a=U(\nu_{j_0})(\tilde{a})$, $b=U(\nu_{j_1})(\tilde{b})$ by Proposition~\ref{coprod=uni}, hence 
\[
\tilde{a}=U(\iota_{j_0})(\tilde{a})=U(\varphi\nu_{j_0})(\tilde{a})=U(\varphi)(a)=U(\varphi)(b) 
= U(\varphi\nu_{j_1})(\tilde{b})=U(\iota_{j_1})(\tilde{b})=\tilde{b}.
\]
Since $\tilde{a}=\tilde{b}\in U(A_{j_0})\cap U(A_{j_1})$,
the hypothesis implies that either $j_0=j_1$ and so 
$a=U(\nu_{j_0})(\tilde{a})=U(\nu_{j_0})(\tilde{b})=b$
or $\tilde{a}=\tilde{b}\in U(\theta_A)$, hence 
$a=U(\nu_{j_0})(\tilde{a})=U(\nu_{j_1})(\tilde{b})=b$
again, as $U(\nu_{j_0})|_{U(\theta_A)}=U(\nu_{j_1})|_{U(\theta_A)}$.
Since $U(\varphi)$ is surjective and injective, $\varphi$ is an isomorphism by Lemma~\ref{mono-epi}(3). Thus $(A, (\varphi\nu_j)_j)$ is a coproduct of the family $(A_j,j\in J)$ and all $\varphi\nu_j$ are the inclusion morphisms, which means that $(A_j,j\in J)$ is a decomposition of $A$.
\end{proof}

Note that the argument of the reverse implication  depends strongly on the fact that $(\mathcal C, U)$ is a concrete category over $\Set$.

\begin{lemma}\label{subobjects-decomp}
Let $A$ be an object, $(B,\mu)$ its subobject with inclusion morphism 
$\mu$ and let $((A_j,\iota_j),j\in J)$ be a decomposition 
of $A$. Then there exists a decomposition $((A_j^{\iota_j}\cap B^{\mu},\mu_j),j\in J)$ of $B$ with inclusion morphisms $\mu_j$, $j\in J$, such that $U(B_j)=U(B)\cap U(A_j)$ for each $j\in J$.
\end{lemma}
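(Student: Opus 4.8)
The plan is to verify the characterization of decompositions from Proposition~\ref{decomp3} for the candidate family $((A_j^{\iota_j}\cap B^\mu,\mu_j),j\in J)$ inside $B$. First I would fix, for each $j$, the subobject $A_j^{\iota_j}\cap B^\mu$ of $B$ together with its inclusion morphism $\mu_j$; these exist by (UD4) applied to the subobjects $A_j^{\iota_j}$ (which are already inclusions into $A$, so they restrict to subobjects of $B$ via (UD3)) and $B^\mu$. Write $B_j=A_j^{\iota_j}\cap B^\mu$ and note $U(B_j)=U(A_j)\cap U(B)$ by (UD4) combined with Lemma~\ref{subobject} (identifying $U(A_j)$ with $U(A_j^{\iota_j})$ and $U(B)$ with $U(B^\mu)$). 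This already gives the last assertion of the statement.

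Next I would check the two set-theoretic conditions of Proposition~\ref{decomp3}. For the covering condition: since $((A_j,\iota_j),j\in J)$ is a decomposition of $A$, Proposition~\ref{decomp3} gives $U(A)=\bigcup_j U(A_j)$, hence
\[
U(B)=U(B)\cap U(A)=U(B)\cap\bigcup_j U(A_j)=\bigcup_j\big(U(B)\cap U(A_j)\big)=\bigcup_j U(B_j),
\]
using $U(B)\subseteq U(A)$ (which follows from $\mu$ being an inclusion morphism). For the near-disjointness condition, fix $i\in J$. Using the same distributivity and the decomposition condition $U(A_i)\cap\bigcup_{j\ne i}U(A_j)=U(\theta_A)$, I compute
\[
U(B_i)\cap\bigcup_{j\ne i}U(B_j)=U(B)\cap U(A_i)\cap\bigcup_{j\ne i}\big(U(B)\cap U(A_j)\big)
=U(B)\cap\Big(U(A_i)\cap\bigcup_{j\ne i}U(A_j)\Big)=U(B)\cap U(\theta_A).
\]
It remains to identify this with $U(\theta_B)$. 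Since $\theta_A$ is (up to the canonical inclusion) a subobject of $B$ — indeed $U(\theta_A)\subseteq U(A)$, but more importantly $U(\theta_A)=U(\theta_B)\subseteq U(B)$ by Lemma~\ref{ZeroSubobj}(1) applied to the subobject $(B,\mu)$ of $A$ — we get $U(B)\cap U(\theta_A)=U(\theta_A)=U(\theta_B)$. Thus both hypotheses of Proposition~\ref{decomp3} hold for the family $(B_j,j\in J)$ in $B$, so it is a decomposition of $B$.

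The only genuinely delicate point is the careful bookkeeping of which object the initial-object subobject $\theta$ is sitting inside: one must invoke Lemma~\ref{ZeroSubobj}(1) to know that $U(\theta_A)$ and $U(\theta_B)$ coincide as subsets of $U(A)$, so that the ``$U(\theta_A)$'' coming out of the decomposition of $A$ is exactly the ``$U(\theta_B)$'' required by Proposition~\ref{decomp3} for a decomposition of $B$. Everything else is a routine transfer of the set-level identities along the faithful functor $U$, together with the distributivity of intersection over union of subsets and the closure of subobjects under restriction guaranteed by (UD3) and (UD4). I do not anticipate needing (UD5) or (UD6) directly here, as Proposition~\ref{decomp3} has already packaged their consequences into the purely set-theoretic criterion.
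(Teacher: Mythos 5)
Your proposal is correct and follows essentially the same route as the paper: define $B_j=A_j^{\iota_j}\cap B^{\mu}$ via (UD4) and (UD3), compute $U(B_j)=U(B)\cap U(A_j)$, and verify the two set-theoretic conditions of Proposition~\ref{decomp3} by distributivity. Your explicit appeal to Lemma~\ref{ZeroSubobj}(1) to identify $U(\theta_A)$ with $U(\theta_B)$ is a point the paper's proof leaves implicit, but the argument is the same.
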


\begin{proof} We put $B_j=A_j^{\iota_j}\cap B^{\mu}$
and it is enough to take  
a morphism $\mu_j\in \Mor(B_j,B)$ such that $\mu\mu_j$ is the inclusion 
morphism $B_j\to A$ and $U(B_j)=U(B)\cap U(A_j)$ for each $j\in J$, which 
exists by (UD4) and (UD3). Since $U(\mu\mu_j)=U(\mu)U(\mu_j)$ and $U(\mu)$ 
are inclusions, $\mu_j$ is an inclusion morphism for all $j\in J$. 
Since $\bigcup_{j}U(B_j)=\bigcup_{j}U(B)\cap U(A_j) = U(B)$ and 
\[
U(\theta_A)\subseteq U(B_i)\cap\bigcup_{j\ne i}U(B_j)= U(B)\cap  U(A_i)\cap\bigcup_{j\ne i}U(A_j) =U(\theta_A)
\]
for each $i\in J$, we obtain that $((B_j,\mu_j),j\in J)$ is a decomposition by Proposition~\ref{decomp3}.
\end{proof}

Let us recall the dual version of terminology from \cite{H}.
If $\M$ is a class of morphisms, we say that a commutative square
\[
\begin{CD}
P @>\alpha>>   A \\
@V\beta VV     @VVV \\
B   @>>> M.
\end{CD}
\]
 is \emph{$\M$-pull-back} provided it is a pull-back diagram with $\alpha,\beta\in\M$.
 
We denote by $\M$ the class of all coproduct structural morphisms in the rest of the section. Remark that all coproduct structural morphisms are monomorphisms by (UD5).

An object $M$ is said to be \emph{mono-extensive} if for each pair of morphisms $\gamma,\delta\in\M$ with codomain $M$ there exist
an object $P$ and morphisms $\alpha,\beta\in\M$ with domain $P$
such that 
\[
\begin{CD}
P @>\alpha>>   A \\
@V\beta VV     @VV\gamma V \\
B   @>\delta >> M.
\end{CD}
\]
is $\M$-pull-back and in every commutative diagram 
\[	
	\begin{CD}
	P_1       @>>>    A     @<<< P_2 \\
	@VVV     @VVV       @VVV\\
	B_1   @>>> B_1 \sqcup B_2 @<<< B_2
	\end{CD}
\]
where $M\cong B_1 \sqcup B_2$, the bottom row is a coproduct diagram, and the vertical morphisms belong to $\M$, the top row is coproduct if and only if both the squares are $\M$-pull-backs. Finally, a category is called \emph{mono-extensive} if all objects are mono-extensive.

\begin{lemma}\label{pull-back}
If $\gamma\in\Mor(A,M)$ and $\delta\in\Mor(B,M)$ are monomorphisms, then there exist monomorphisms $\alpha\in\Mor(A^\gamma\cap B^\delta,A)$ and $\beta\in\Mor(A^\gamma\cap B^\delta,B)$ such that $\gamma\alpha=\delta\beta=\iota_\cap$ for the inclusion morphism $i_\cap$ and
\[
\begin{CD}
A^\gamma\cap B^\delta @>\alpha>>   A \\
@V\beta VV     @VV\gamma V \\
B   @>\delta >> M.
\end{CD}
\]
is a pull-back diagram.
\end{lemma}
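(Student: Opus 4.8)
The plan is to build the morphisms $\alpha$ and $\beta$ directly from the axioms and then verify the universal property of the pull-back by hand, exploiting that $(\Cc,U)$ is concrete over $\Set$; the statement is really the concrete-category shadow of the elementary fact that in $\Set$ the pull-back of two injections is their intersection.

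First I would observe that, since $\gamma$ and $\delta$ are monomorphisms, $(A,\gamma)$ and $(B,\delta)$ are subobjects of $M$, so (UD4) supplies the subobject $(A^\gamma\cap B^\delta,\iota_\cap)$ of $M$ with $U(A^\gamma\cap B^\delta)=U(\gamma)(U(A))\cap U(\delta)(U(B))$ and $U(\iota_\cap)$ the subset inclusion. Because $U(\iota_\cap)(U(A^\gamma\cap B^\delta))=U(\gamma)(U(A))\cap U(\delta)(U(B))\subseteq U(\gamma)(U(A))$, condition (UD3) applied to $\iota_\cap\colon A^\gamma\cap B^\delta\to M$ and the subobject $(A,\gamma)$ of $M$ yields $\alpha\in\Mor(A^\gamma\cap B^\delta,A)$ with $\gamma\alpha=\iota_\cap$; symmetrically, (UD3) for the subobject $(B,\delta)$ gives $\beta\in\Mor(A^\gamma\cap B^\delta,B)$ with $\delta\beta=\iota_\cap$. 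Since $\iota_\cap$ is an inclusion morphism, $U(\iota_\cap)$ is injective, hence $\iota_\cap$ is a monomorphism by Lemma~\ref{mono-epi}(1); as $\gamma\alpha=\delta\beta=\iota_\cap$ and a left factor of a monomorphism is a monomorphism, both $\alpha$ and $\beta$ are monomorphisms. This already produces the commuting square with $\gamma\alpha=\delta\beta=\iota_\cap$.

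Next I would check the universal property. Let $P$ be an object with $p\in\Mor(P,A)$, $q\in\Mor(P,B)$ and $\gamma p=\delta q$. Then $U(\gamma p)(U(P))=U(\gamma)(U(p)(U(P)))\subseteq U(\gamma)(U(A))$ and likewise $U(\gamma p)(U(P))=U(\delta q)(U(P))\subseteq U(\delta)(U(B))$, so $U(\gamma p)(U(P))\subseteq U(\gamma)(U(A))\cap U(\delta)(U(B))=U(\iota_\cap)(U(A^\gamma\cap B^\delta))$. Hence (UD3) applied to $\gamma p\colon P\to M$ and the subobject $(A^\gamma\cap B^\delta,\iota_\cap)$ of $M$ produces $h\in\Mor(P,A^\gamma\cap B^\delta)$ with $\iota_\cap h=\gamma p$. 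From $\gamma(\alpha h)=\iota_\cap h=\gamma p$ and $\gamma$ monic I get $\alpha h=p$, and from $\delta(\beta h)=\iota_\cap h=\gamma p=\delta q$ and $\delta$ monic I get $\beta h=q$. For uniqueness, if $h'$ also satisfies $\alpha h'=p$ and $\beta h'=q$, then $\iota_\cap h'=\gamma\alpha h'=\gamma p=\iota_\cap h$, and $\iota_\cap$ monic forces $h'=h$, completing the verification.

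There is no serious obstacle here beyond bookkeeping: the one point needing care is to line up the set-theoretic inclusions so that (UD3) applies — $U(A^\gamma\cap B^\delta)$ inside both $U(\gamma)(U(A))$ and $U(\delta)(U(B))$ when constructing $\alpha,\beta$, and the image of $U(\gamma p)$ inside $U(A^\gamma\cap B^\delta)$ when constructing $h$ — and to keep in mind that $\gamma$, $\delta$, $\iota_\cap$ are all monomorphisms, which is what forces the induced factorizations to be unique.
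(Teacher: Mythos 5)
Your proof is correct and follows essentially the same route as the paper: construct $\iota_\cap$ via (UD4), factor it through $\gamma$ and $\delta$ via (UD3) to get $\alpha,\beta$ (monomorphisms because $\iota_\cap$ is), and verify the universal property by another application of (UD3) plus cancellation of the monomorphisms. The only cosmetic slip is calling $\alpha$ a ``left factor'' of $\gamma\alpha$ --- it is the first morphism applied, and the cancellation argument you give for it is the right one.
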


\begin{proof} Since the existence of morphisms $\alpha\in\Mor(A^\gamma\cap B^\delta,A)$ and $\beta\in\Mor(A^\gamma\cap B^\delta,B)$ follows from (UD3) and (UD4) and both are monomorphisms as $\gamma\alpha=\delta\beta=\iota_\cap$ is a monomorphism, it is enough to check the pull-back universal property. If 
\[
\begin{CD}
\tilde{P} @>\tilde{\alpha}>>   A \\
@V\tilde{\beta} VV     @VV\gamma V \\
B   @>\delta >> M.
\end{CD}
\]
is a commutative diagram, then there exists $\tau\in\Mor(\tilde{P},A^\gamma\cap B^\delta)$ 
satisfying
\[
\gamma\alpha\tau
=\gamma\tilde{\alpha}
=\delta\tilde{\beta}
= \delta\beta\tau
\]
again by (UD3), which implies that 
$\alpha\tau =\tilde{\alpha}$, $\beta\tau =\tilde{\beta}$
 as $\gamma$ and $\delta$ are monomorphisms. Since $\alpha$ and $\beta$ are monomorphisms as well, the same argument gives the uniqueness of $\tau$.
\end{proof}

Let us formulate a description of pull-back diagrams along inclusion morphisms.

\begin{lemma}\label{characterization:pull-back}
Let all morphisms of a commutative diagram
\[
\begin{CD}
P @>>>   A \\
@VVV     @VVV \\
B   @>>> M.
\end{CD}
\]
be inclusion morphisms. Then 
it is a pull-back diagram if and only if $U(P)=U(A)\cap U(B)$.
\end{lemma}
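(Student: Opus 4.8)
The plan is to compare the given square with the canonical pull-back furnished by Lemma~\ref{pull-back}. Write $\gamma\colon A\to M$ and $\delta\colon B\to M$ for the two inclusion morphisms on the right and bottom of the square; being inclusion morphisms they are monomorphisms by Lemma~\ref{mono-epi}(1), so Lemma~\ref{pull-back} applies and yields a pull-back square with corner $Q:=A^\gamma\cap B^\delta$, inclusion morphisms $\alpha\colon Q\to A$ and $\beta\colon Q\to B$, and $U(Q)=U(A)\cap U(B)$ by (UD4). Everything then reduces to identifying the corner $P$ of the given square with $Q$.

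For the forward implication, assume the given square with corner $P$ is a pull-back of the cospan $(\gamma,\delta)$. Since pull-backs are unique up to a unique compatible isomorphism, there is an isomorphism $\psi\colon P\to Q$ with $\alpha\psi$ the inclusion $P\to A$ and $\beta\psi$ the inclusion $P\to B$. Applying $U$: the map $U(\psi)$ is a bijection by Lemma~\ref{mono-epi}(3), while $U(\alpha)$ and the inclusion $U(P)\to U(A)$ are genuine subset inclusions; the relation $U(\alpha)\circ U(\psi)=U(\iota_P^A)$ therefore forces $U(P)=U(Q)$ as subsets of $U(A)$, and hence $U(P)=U(A)\cap U(B)$.

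For the reverse implication, assume $U(P)=U(A)\cap U(B)=U(Q)$. Apply (UD3) twice to the ambient object $A$: once to factor the inclusion $\iota_P^A\colon P\to A$ through $\alpha\colon Q\to A$ (legitimate since $U(\iota_P^A)(U(P))=U(P)=U(Q)=U(\alpha)(U(Q))$), obtaining $g\colon P\to Q$ with $\alpha g=\iota_P^A$; and once to factor $\alpha$ through $\iota_P^A$, obtaining $h\colon Q\to P$ with $\iota_P^A h=\alpha$. Because $\alpha$ and $\iota_P^A$ are monomorphisms, $\alpha g h=\alpha$ and $\iota_P^A h g=\iota_P^A$ give $gh=\id_Q$ and $hg=\id_P$, so $g$ is an isomorphism. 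A faithfulness argument shows $g$ intertwines the two legs into $A$ and into $B$: since $U(\alpha)\circ U(g)=U(\iota_P^A)$ and $U(P)=U(Q)$ as subsets of $U(A)$, the map $U(g)$ is the identity on this common underlying set, whence $U(\beta g)=U(\beta)$ equals the subset inclusion $U(P)\to U(B)$, i.e. $U(\beta g)=U(\iota_P^B)$, so $\beta g=\iota_P^B$ by faithfulness. As $Q$ with $(\alpha,\beta)$ is a pull-back of $(\gamma,\delta)$ and pull-backs are stable under precomposition with an isomorphism, $P$ with its two inclusion legs is a pull-back, i.e. the given square is a pull-back diagram.

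The one delicate point throughout is the bookkeeping: one must check each time that the isomorphism identifying $P$ with the canonical intersection $Q$ genuinely intertwines all four morphisms of the square. This always follows from the faithfulness of $U$ together with the fact that $U$ sends inclusion morphisms to literal subset inclusions, so that an isomorphism over a fixed object whose $U$-images coincide acts as the identity on the underlying sets.
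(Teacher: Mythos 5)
Your proof is correct and follows essentially the same route as the paper: both directions reduce to comparing the corner $P$ with the canonical pull-back $A^\gamma\cap B^\delta$ supplied by Lemma~\ref{pull-back}, whose underlying set is $U(A)\cap U(B)$ by (UD4). The only (cosmetic) difference is that in the reverse direction the paper obtains the comparison morphism $\tau\colon P\to A^\gamma\cap B^\delta$ from the universal property of the pull-back and then checks it is a bijective inclusion, whereas you build the mutually inverse morphisms directly from (UD3) and monomorphism cancellation; both variants are sound.
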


\begin{proof}
The direct implication follows from Lemma~\ref{pull-back} and (UD4), since $P$ and $A^\gamma\cap B^\delta$ are isomorphic.

To prove the reverse implication, let us suppose that the commutative diagram
\[
\begin{CD}
P @>\tilde{\alpha} >>   A \\
@V\tilde{\beta} VV     @VV\gamma V \\
B   @>\delta >> M,
\end{CD}
\]
where all morphisms are inclusion morphisms, satisfies the condition $U(P)=U(A)\cap U(B)$. Since we have a pull-back diagram
\[
\begin{CD}
A^\gamma\cap B^\delta @>\alpha>>   A \\
@V\beta VV     @VV\gamma V \\
B   @>\delta >> M.
\end{CD}
\]
by Lemma~\ref{pull-back}, there exists 
a morphism $\tau\in\Mor(P,A^\gamma\cap B^\delta)$ 
satisfying $\alpha\tau =\tilde{\alpha}$, $\beta\tau =\tilde{\beta}$. Since both mappings $U(\alpha)$ and $U(\tilde{\alpha})$ are inclusions and  $U(\alpha)U(\tau) =U(\tilde{\alpha})$, $U(\tau)$ is an inclusion as well. By the hypothesis $U(\tau)$ is surjective, hence $U(\tau)$ is a bijection  and so $\tau$ is an isomorphism by  Lemma~\ref{mono-epi}(3).
\end{proof}

\begin{lemma}\label{closure}
The class $\M$ of all coproduct structural morphisms is closed under composition, coproducts and pull-back morphisms.
\end{lemma}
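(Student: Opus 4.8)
\emph{Strategy.} The plan is to recast membership in $\M$ in terms of decompositions and then apply the calculus of decompositions developed above. By (UD2) and Lemma~\ref{subobject}, every structural morphism $\nu\colon A\to M$ factors as $\nu=\iota\tilde\nu$ with $\tilde\nu$ an isomorphism and $\iota$ the inclusion morphism of the summand $A^{\nu}$ of $M$; grouping all the remaining summands by Corollary~\ref{decomp2} then exhibits a two-term decomposition $((A^{\nu},\iota),(C,\iota_C))$ of $M$. Conversely, any isomorphism is a structural morphism (of the one-term coproduct it constitutes), and reindexing a coproduct shows that $\M$ is stable under pre- and post-composition with isomorphisms. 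Hence it suffices to prove each of the three closure statements for inclusion morphisms of summands.

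\emph{Closure under composition.} Let $\gamma\in\Mor(A,M)$ and $\delta\in\Mor(M,N)$ lie in $\M$; after the reduction above, $M$ carries a decomposition with $A$ among its summands, and $N$ a decomposition with $M$ among its summands. Refining the latter by replacing the summand $M$ with the summands of the chosen decomposition of $M$ (regarded as subobjects of $N$ via $M\hookrightarrow N$), Corollary~\ref{decomp2} produces a decomposition of $N$ in which $A$ is again a summand, and the corresponding inclusion morphism $A\hookrightarrow N$ is visibly the composite of $A\hookrightarrow M$ with $M\hookrightarrow N$, i.e.\ $\delta\gamma$. Hence $\delta\gamma\in\M$.

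\emph{Closure under coproducts.} Let $\gamma_i\in\Mor(A_i,M_i)$ lie in $\M$ for $i\in I$, and reduce to the case that $A_i$ is a summand of $M_i$. By Corollary~\ref{decomp1} the family $(M_i)_{i\in I}$, identified with the summands $M_i^{\nu_i}$, is a decomposition of $M:=\coprod_iM_i$; refining each $M_i$ by its own decomposition via Corollary~\ref{decomp2} exhibits a decomposition of $M$ containing every $A_i$, with inclusion morphism $\nu_i\gamma_i$ into $M$. Passing to the subfamily $(A_i)_{i\in I}$ and applying Corollary~\ref{decomp2} once more, $B:=\bigcup_iA_i^{\nu_i\gamma_i}$ is a summand of $M$ and $(A_i)_{i\in I}$ is a decomposition of $B$, so $(B,(\mu_i)_i)$ with the resulting inclusion morphisms $\mu_i$ is a coproduct of $(A_i)_{i\in I}$. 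Comparing universal properties, the canonical morphism $\coprod_i\gamma_i$ equals $\iota_B\psi$, where $\psi\colon\coprod_iA_i\to B$ is the comparison isomorphism and $\iota_B\colon B\hookrightarrow M$ lies in $\M$; hence $\coprod_i\gamma_i\in\M$ by stability under precomposition with isomorphisms.

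\emph{Closure under pull-back morphisms, and the main obstacle.} Let $\gamma\in\Mor(A,M)$ lie in $\M$, reduced so that $A$ is a summand of $M$ with complementary summand $C$, and let $\delta\in\Mor(B,M)$ be a monomorphism, which by Lemma~\ref{subobject} we may take to be the inclusion morphism of a subobject $B\subseteq M$. By Lemma~\ref{pull-back} the pull-back of $\gamma$ along $\delta$ is $A^{\gamma}\cap B^{\delta}$ with inclusion morphism $\beta$ into $B$, and Lemma~\ref{subobjects-decomp}, applied to the decomposition $((A,\gamma),(C,\iota_C))$ of $M$ and the subobject $B$, shows that $((A^{\gamma}\cap B^{\delta},\beta),(C^{\iota_C}\cap B^{\delta},\beta'))$ is a decomposition of $B$; hence $\beta\in\M$. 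If moreover $\delta\in\M$, the symmetric argument gives the other leg $\alpha\in\M$, so the pull-back of two members of $\M$ with a common codomain is an $\M$-pull-back. I expect the pull-back clause to be the delicate point: closure genuinely fails for pull-backs along \emph{arbitrary} morphisms (already in $S$-$\A_0$, since the $\delta$-preimage of a summand along a non-injective $\delta$ need not be a summand), so the statement must be read for pull-backs along monomorphisms; and even there the fact that the two intersections form a decomposition of $B$ rests on the handling of the initial object packaged in Lemma~\ref{subobjects-decomp} --- ultimately on the identities $U(\theta_B)=U(\theta_M)$ and $U(A)\cap U(C)=U(\theta_M)$ furnished by Lemma~\ref{ZeroSubobj} and Proposition~\ref{decomp3} --- rather than on a naive set-theoretic disjointness of the underlying sets.
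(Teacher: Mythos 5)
Your proposal is correct and follows essentially the same route as the paper: reduce to inclusion morphisms via Lemma~\ref{subobject}, handle composition and coproducts by refining decompositions (you invoke Corollary~\ref{decomp2} where the paper verifies the conditions of Proposition~\ref{decomp3} directly, which is the same computation), and treat the pull-back clause via Lemma~\ref{pull-back} and Lemma~\ref{subobjects-decomp}. Your pull-back argument is in fact slightly cleaner than the paper's (which loosely says the legs are ``members of decompositions of $M$'' rather than of $A$ and $B$), and your observation that one leg already lands in $\M$ when pulling back along an arbitrary monomorphism is a correct mild strengthening.
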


\begin{proof} Applying Lemma~\ref{subobject} we may suppose without loss of generality that all considered coproduct structural morphisms are inclusion morphisms.

Let $\phi_0\in \Mor(A_0,B)$ and $\psi\in \Mor(B,C)$ be coproduct structural morphisms. Then there exist $\phi_1\in \Mor(A_1,B)$
and $\phi_2\in \Mor(A_2,C)$ such that
$((A_0,\phi_0), (A_1,\phi_1))$ is a decomposition of $B$ and
$((B,\psi), (A_2,\phi_2))$ forms a decomposition of $C$.
Now it remains to apply Proposition~\ref{decomp3}. 
Since 
\[
U(A_0)\cup U(A_1)\cup U(A_2)=U(B)\cup U(A_2) = U(C)
\]
and 
\[
U(A_i)\cap (U(A_j)\cup U(A_k))=U(\theta)
\]
for all $i\ne j\ne k\ne i$, we can see that 
$
((A_0,\psi\phi_0), (A_1,\psi\phi_1), (A_2,\phi_2))
$
forms a decomposition of $C$, hence $\psi\phi_0\in\M$.

The argument for a coproduct morphism is similar; if
$\phi_i\in \Mor(A_i,B_i)$, $i\in I$, are coproduct structural morphisms, then there exists $\tilde{\phi}_i\in \Mor(\tilde{A}_i,B_i)$  such that
$((A_i,\phi_i), (\tilde{A}_i,\phi_i))$ forms a decomposition of $B_i$ for each $i\in I$. Then $((\coprod_i A_i,\coprod_i\phi_i), (\coprod_i\tilde{A}_i,\coprod_i\phi_i))$ is a decomposition of $\coprod_i B_i$ and since
\[
\bigcup_i U(A_i)\cup \bigcup_i\tilde{A}_i =U(\coprod_i B_i)\ \text{and}\   
\bigcup_i U(A_i)\cap \bigcup_i\tilde{A}_i =U(\theta),
\]
$\coprod_i\phi_i$ is a coproduct structural morphism by Proposition~\ref{decomp3}.

Finally, let 
\[
\begin{CD}
P @>\alpha >>   A \\
@V\beta VV     @VV\gamma V \\
B   @>\delta >> M.
\end{CD}
\]
be a pull-back diagram with $\gamma, \delta\in \M$. Then 
$U(P)=U(A)\cap U(B)$ by Lemma~\ref{characterization:pull-back}
and there exists a decomposition $((A,\gamma),(\tilde{A}, \tilde{\gamma}))$ of $M$. Thus  
$(P,\beta)$ and symmetrically $(P,\alpha)$ are members of decompositions of $M$, hence $\alpha$ ($\beta$, resp.) are coproduct structural morphisms by Lemma~\ref{subobjects-decomp}.
\end{proof}

\begin{proposition}\label{mono-extensive}
Any UD-category is mono-extensive.
\end{proposition}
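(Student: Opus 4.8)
The plan is to verify the two requirements in the definition of mono-extensivity for an arbitrary object $M$ of the UD-category $(\Cc,U)$, where $\M$ is the class of all coproduct structural morphisms.

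\textbf{Existence of the $\M$-pull-back.} Let $\gamma\in\Mor(A,M)$ and $\delta\in\Mor(B,M)$ be members of $\M$. By Lemma~\ref{subobject} we may assume both are inclusion morphisms. Lemma~\ref{pull-back} produces a pull-back diagram over $\gamma,\delta$ with apex $A^\gamma\cap B^\delta$ and legs $\alpha,\beta$ satisfying $\gamma\alpha=\delta\beta=\iota_\cap$. It remains to observe $\alpha,\beta\in\M$: since $\gamma\in\M$ there is a decomposition $((A,\gamma),(\tilde A,\tilde\gamma))$ of $M$, and then Lemma~\ref{subobjects-decomp} applied to the subobject $(A,\gamma)$ of $M$ together with this decomposition exhibits $(A^\gamma\cap B^\delta\;(=B^\delta\cap A^\gamma),\,\beta)$ as a member of a decomposition of $B$; symmetrically for $\alpha$. (This is exactly the pull-back clause of Lemma~\ref{closure}.) Hence $\alpha,\beta\in\M$ and the square is an $\M$-pull-back.

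\textbf{The coextensivity square.} Now fix a decomposition $M\cong B_1\sqcup B_2$, a coproduct diagram on the bottom row with structural morphisms $\delta_1,\delta_2\in\M$, and a commutative diagram
\[
\begin{CD}
P_1       @>\rho_1>>    A     @<\rho_2<< P_2 \\
@V\beta_1 VV     @VV\gamma V       @VV\beta_2 V\\
B_1   @>\delta_1>> M @<\delta_2<< B_2
\end{CD}
\]
with $\gamma,\beta_1,\beta_2\in\M$. By Lemma~\ref{subobject} we may take every morphism in sight to be an inclusion morphism, so $U(P_i)\subseteq U(B_i)$, $U(B_i)\subseteq U(M)$, $U(A)\subseteq U(M)$, and by Proposition~\ref{decomp3} we have $U(M)=U(B_1)\cup U(B_2)$ with $U(B_1)\cap U(B_2)=U(\theta_M)$.

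For the forward direction, assume the top row is a coproduct, so $A\cong P_1\sqcup P_2$; by Proposition~\ref{decomp3}, $U(A)=U(P_1)\cup U(P_2)$ and $U(P_1)\cap U(P_2)=U(\theta_A)$. Then $U(A)\cap U(B_1)=(U(P_1)\cup U(P_2))\cap U(B_1)=U(P_1)\cup(U(P_2)\cap U(B_1))$; since $U(P_2)\subseteq U(B_2)$ and $U(B_1)\cap U(B_2)=U(\theta_M)\subseteq U(P_1)$ (using Lemma~\ref{ZeroSubobj}), this equals $U(P_1)$. By Lemma~\ref{characterization:pull-back} the left square is a pull-back; symmetrically the right one. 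Conversely, assume both squares are $\M$-pull-backs. Lemma~\ref{characterization:pull-back} gives $U(P_1)=U(A)\cap U(B_1)$ and $U(P_2)=U(A)\cap U(B_2)$. Then $U(P_1)\cup U(P_2)=U(A)\cap(U(B_1)\cup U(B_2))=U(A)\cap U(M)=U(A)$, and $U(P_1)\cap U(P_2)=U(A)\cap U(B_1)\cap U(B_2)=U(A)\cap U(\theta_M)$, which equals $U(\theta_A)$ by Lemma~\ref{ZeroSubobj} since $\theta_A\subseteq A\subseteq M$. By Proposition~\ref{decomp3}, $((P_1,\rho_1),(P_2,\rho_2))$ is a decomposition of $A$, i.e.\ the top row is a coproduct diagram.

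\textbf{Main obstacle.} The computations themselves are routine Boolean algebra on the underlying sets; the one point that genuinely requires care is handling the various (distinct but isomorphic) initial objects $\theta_A$, $\theta_M$ consistently, which is why Lemma~\ref{ZeroSubobj} is invoked repeatedly to identify $U(\theta_A)$ with $U(\theta_M)$ inside $U(M)$. Once that bookkeeping is set up, everything reduces to Proposition~\ref{decomp3} and Lemma~\ref{characterization:pull-back}, together with Lemma~\ref{subobjects-decomp} (equivalently the relevant part of Lemma~\ref{closure}) for the membership of the pull-back legs in $\M$.
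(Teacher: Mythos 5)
Your proof is correct and follows essentially the same route as the paper's: reduce to inclusion morphisms via Lemma~\ref{subobject}, obtain the $\M$-pull-back from Lemma~\ref{pull-back} together with the pull-back clause of Lemma~\ref{closure}, and then verify both directions of the coextensivity square by Boolean computations on underlying sets using Proposition~\ref{decomp3}, Lemma~\ref{characterization:pull-back} and Lemma~\ref{ZeroSubobj}. Your treatment is, if anything, slightly more explicit than the paper's about why the legs of the constructed pull-back lie in $\M$.
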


\begin{proof} The existence of $\M$-pull-backs follows immediately from Lemma~\ref{pull-back} since all coproduct structural morphisms are monomorphisms by (UD5). Let 
\[	
	\begin{CD}
	P_1       @>>>   P_1 \sqcup P_2     @<<< P_2 \\
	@VVV     @VVV       @VVV\\
	B_1   @>>> B_1 \sqcup B_2 @<<< B_2
	\end{CD}
\]
be a commutative diagram with coproduct diagrams on both the rows where all morphisms belong to $\M$. We may suppose without loss of generality by Lemma~\ref{subobject} that
all morphisms are inclusion morphisms and we will show that
 both the squares are $\M$-pull-backs by applying the criterion Lemma~\ref{characterization:pull-back}.
Since, $U(P_i)\subseteq U(B_i)$ for $i=1,2$ and $U(B_1)\cap U(B_2)=U(\theta)$ we obtain
\[
U(P_1)\subseteq U(P_1 \sqcup P_2)\cap U(B_1)
= (U(P_1) \cup U(P_2))\cap U(B_1) =
\]
\[
=(U(P_1)\cap U(B_1)) \cup (U(P_2)\cap U(B_1))=
U(P_1)\cap U(\theta) = U(P_1),
\]
hence $U(P_1)= U(P_1 \sqcup P_2)\cap U(B_1)$ and the left square is a pull-back. The argument for the right square is symmetric.

Conversely, let
\[	
	\begin{CD}
	P_1       @>>>   A     @<<< P_2 \\
	@VVV     @VVV       @VVV\\
	B_1   @>>> B_1 \sqcup B_2 @<<< B_2
	\end{CD}
\]
be a commutative diagram with  all vertical morphisms belonging to $\M$ and with a coproduct diagram in the bottom row such that both the squares are $\M$-pull-backs. By Lemma~\ref{closure} we have that all morphisms of the diagram belong to $\M$. We may suppose that all morphisms are inclusion morphisms again and we need to prove that the top row forms a decomposition of 
the object $A$. Since
$U(P_i)=U(A)\cap U(B_i)$ for $i=1,2$ by Lemma~\ref{characterization:pull-back}, we can easily compute that
\[
U(P_1)\cap U(P_2)= (U(A)\cap U(B_1))\cap (U(A)\cap U(B_2))
= U(A)\cap (U(B_1\cap U(B_2))= U(\theta),
\]
\[
U(P_1)\cup U(P_2)= (U(A)\cap U(B_1))\cup (U(A)\cap U(B_2))
= U(A)\cup (U(B_1\cup U(B_2))= U(A),
\]
by Proposition~\ref{decomp3}, which implies that the top row is a coproduct diagram.
\end{proof}

Recall that an object of a UD-category $(\Cc, U)$ is indecomposable provided it is indecomposable object of $\Cc$ and
let us say that an object $A$ is \emph{uniquely decomposable} if there exist a family of indecomposable objects  $(A_j,j\in J)$
such that $A\cong\coprod_{j\in J}A_j$ and for each family $(\tilde{A}_j,j\in J)$
satisfying $A\cong\coprod_{j\in J}\tilde{A}_j$ there exists a bijection $b:J\to\tilde{J}$ such that $A_j\cong \tilde{A}_{b(j)}$  for each $j\in J$.

\begin{proposition}\label{Hoefnagel}
Every object of a  mono-extensive category possessing a decomposition  into indecomposable objects is uniquely decomposable.
\end{proposition}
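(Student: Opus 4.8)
The plan is to establish uniqueness of the indecomposable decomposition via a standard ``matching'' argument, using the mono-extensivity hypothesis (Proposition~\ref{mono-extensive}) to intersect two given decompositions against each other. Suppose $A\cong\coprod_{j\in J}A_j\cong\coprod_{k\in K}\tilde A_k$ with all $A_j,\tilde A_k$ indecomposable. By Corollary~\ref{decomp1} we may assume these are decompositions into subobjects with inclusion morphisms, so that (working through the forgetful functor $U$ and Proposition~\ref{decomp3}) we have $U(A)=\bigcup_j U(A_j)=\bigcup_k U(\tilde A_k)$, with pairwise intersections equal to $U(\theta_A)$ within each family. First I would fix $j\in J$ and apply Lemma~\ref{subobjects-decomp} to the subobject $A_j\hookrightarrow A$ together with the decomposition $(\tilde A_k)_{k\in K}$: this yields a decomposition $\big((A_j\cap\tilde A_k,\,\mu_{jk}),k\in K\big)$ of $A_j$, where $U(A_j\cap\tilde A_k)=U(A_j)\cap U(\tilde A_k)$. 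Since $A_j$ is indecomposable and all but one member of a decomposition of an indecomposable object must be initial (a coproduct of two noninitial objects is decomposable), there is a \emph{unique} index $k=\sigma(j)$ with $A_j\cap\tilde A_{\sigma(j)}\not\cong\theta$; moreover for that index $A_j\cong A_j\cap\tilde A_{\sigma(j)}$, so $U(A_j)\subseteq U(\tilde A_{\sigma(j)})$ modulo $U(\theta_A)$, and $U(A_j)\cap U(\tilde A_k)=U(\theta_A)$ for $k\ne\sigma(j)$.

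This defines a map $\sigma:J\to K$, and symmetrically a map $\tau:K\to J$. The key step is to show these are mutually inverse bijections. From the relation $U(A_j)\subseteq U(\tilde A_{\sigma(j)})$ (up to $U(\theta_A)$) and, symmetrically, $U(\tilde A_k)\subseteq U(A_{\tau(k)})$, one gets $U(A_j)\subseteq U(A_{\tau(\sigma(j))})$ up to $U(\theta_A)$; since distinct members of the decomposition $(A_j)_{j}$ meet only in $U(\theta_A)$ and $A_j\not\cong\theta$ (using Lemma~\ref{ZeroSubobj} to detect initiality via the underlying sets), this forces $\tau(\sigma(j))=j$. The same argument gives $\sigma(\tau(k))=k$, so $\sigma$ is a bijection with inverse $\tau$. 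Finally, for each $j$ the inclusion $A_j\cong A_j\cap\tilde A_{\sigma(j)}\hookrightarrow\tilde A_{\sigma(j)}$ is, by the above, a monomorphism whose image $U(A_j)$ equals $U(\tilde A_{\sigma(j)})$ (because by symmetry $U(\tilde A_{\sigma(j)})\subseteq U(A_{\tau(\sigma(j))})=U(A_j)$ up to $U(\theta_A)$, and one checks the initial part is shared); hence $U$ of this morphism is a bijection and, by Lemma~\ref{mono-epi}(3), $A_j\cong\tilde A_{\sigma(j)}$. Taking $b=\sigma$ completes the proof.

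I expect the main obstacle to be the careful bookkeeping of the initial subobject $\theta_A$ throughout: the identities ``$U(A_j)\subseteq U(\tilde A_{\sigma(j)})$'' really hold only up to $U(\theta_A)$, and one must consistently invoke Lemma~\ref{ZeroSubobj} to conclude that a subobject whose underlying set lies in $U(\theta_A)$ is itself initial, and that indecomposable objects are never initial. A secondary subtlety is extracting, from Lemma~\ref{subobjects-decomp} applied to an indecomposable object, the statement that exactly one intersection is non-initial — this is precisely where the definition of indecomposable (not a coproduct of two noninitial objects) is used, combined with the fact, via Corollary~\ref{decomp2}, that a decomposition can be coarsened by unioning its members. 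Everything else is a routine chase through $U$ using Proposition~\ref{decomp3} and the faithfulness of $U$.
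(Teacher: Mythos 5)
Your argument is essentially a direct refinement/matching proof and it is sound as far as it goes: intersecting the two decompositions via Lemma~\ref{subobjects-decomp}, using indecomposability (together with Corollary~\ref{decomp2} and Lemma~\ref{ZeroSubobj}) to see that exactly one intersection $A_j\cap\tilde A_{\sigma(j)}$ is noninitial, and then chasing underlying sets through Proposition~\ref{decomp3} to get $U(A_j)\subseteq U(\tilde A_{\sigma(j)})$, $\tau\sigma=\mathrm{id}$, and finally $A_j\cong\tilde A_{\sigma(j)}$ via Lemma~\ref{mono-epi}(3). (One small simplification: the inclusion $U(A_j)\subseteq U(\tilde A_{\sigma(j)})$ is genuine, not merely ``up to $U(\theta_A)$'', since $U(\theta_A)\subseteq U(A_j\cap\tilde A_{\sigma(j)})$ by Lemma~\ref{ZeroSubobj}(1).) This is, however, a genuinely different route from the paper, which simply invokes the duals of Theorem~2.1 and Remark~2.3 of Hoefnagel's paper on $\M$-coextensive objects and the strict refinement property, after checking (Lemma~\ref{closure}) that the class $\M$ of coproduct structural morphisms satisfies the required closure conditions.

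The one substantive caveat is generality. The proposition is stated for an arbitrary mono-extensive category, and the paper's citation proves it at that level; your proof never uses mono-extensivity at all, but instead leans throughout on the concrete-category axioms (UD1)--(UD6) via Proposition~\ref{decomp3}, Lemma~\ref{subobjects-decomp} and Lemma~\ref{ZeroSubobj}, which are only available in a UD-category. So what you have proved is the special case needed for Theorem~\ref{decomposition} (which is all the paper ever uses), not the proposition as literally stated. If you want to keep your self-contained argument, the clean fix is either to restate the proposition for UD-categories, or to recast the ``exactly one noninitial intersection'' step purely in terms of $\M$-pull-backs of the two coproduct injections, which is precisely what the strict refinement property of Hoefnagel's theorem packages.
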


\begin{proof} It follows from the dual version of \cite[Remark 2.3]{H} using the dual version of \cite[Theorem 2.1]{H} which can be applied by Lemma~\ref{closure}.
\end{proof}

The next assertion presents a natural construction of indecomposable objects in a UD-category.

\begin{lemma}\label{union of indecomposables}
Let $((A_i,\nu_i), i\in I)$ be a family of subobjects of an object $A$ such that $A_i$ is indecomposable for each $i\in I$.
If $\bigcap_{i\in I}U(A_i^{\nu_i}) \ne U(\theta_A)$, then there exists an inclusion morphism $\iota_\cup\in\Mor(\bigcup_{i\in I} A_i^{\nu_i},A)$  such that $(\bigcup_{i\in I} A_i^{\nu_i},\iota_\cup)$ is an indecomposable subobject of $A$.
\end{lemma}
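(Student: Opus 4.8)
The object to study is $B:=\bigcup_{i\in I}A_i^{\nu_i}$, which by (UD4) carries an inclusion morphism $\iota_\cup\colon B\to A$ and satisfies $U(B)=\bigcup_{i\in I}U(A_i^{\nu_i})$; the plan is to show that this $B$ is indecomposable, which yields the claim with the $\iota_\cup$ above. (We may assume $I\neq\emptyset$.) That $B$ is not initial is immediate: by hypothesis we may pick $x\in\bigcap_{i\in I}U(A_i^{\nu_i})\subseteq U(B)$ with $x\notin U(\theta_A)$, so $U(B)\nsubseteq U(\theta_A)$ and hence $B\not\cong\theta$ by Lemma~\ref{ZeroSubobj}(2). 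It remains to rule out $B\cong B_0\sqcup B_1$ with $B_0,B_1$ non-initial; I would argue by contradiction.

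First I would pass to the \emph{inclusion} picture. Given such a decomposition, Corollary~\ref{decomp1} (together with Lemma~\ref{subobject}) replaces $B_0,B_1$ by subobjects $(C_0,\iota_0),(C_1,\iota_1)$ of $B$ with inclusion morphisms, with $C_k\cong B_k$ non-initial, such that $((C_0,\iota_0),(C_1,\iota_1))$ is a decomposition of $B$. By Proposition~\ref{decomp3} this means $U(C_0)\cup U(C_1)=U(B)$ and $U(C_0)\cap U(C_1)=U(\theta_B)=U(\theta_A)$, the last equality by Lemma~\ref{ZeroSubobj}(1); set $Z:=U(\theta_A)$, so that $Z\subseteq U(C_k)$ for $k=0,1$.

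The heart of the argument is to show that every $A_i^{\nu_i}$ is contained in $C_0$ or in $C_1$. Since $U(A_i^{\nu_i})\subseteq U(B)$, axiom (UD3) makes $A_i^{\nu_i}$ a subobject of $B$ with an inclusion morphism, so Lemma~\ref{subobjects-decomp} produces a decomposition of $A_i^{\nu_i}$ whose two summands have $U$-images $U(A_i^{\nu_i})\cap U(C_0)$ and $U(A_i^{\nu_i})\cap U(C_1)$. But $A_i^{\nu_i}\cong A_i$ (Lemma~\ref{subobject}) is indecomposable, so one of these summands is initial, hence by Lemma~\ref{ZeroSubobj} either $U(A_i^{\nu_i})\cap U(C_1)=Z$ or $U(A_i^{\nu_i})\cap U(C_0)=Z$; both cannot hold simultaneously, because then $U(A_i^{\nu_i})=(U(A_i^{\nu_i})\cap U(C_0))\cup(U(A_i^{\nu_i})\cap U(C_1))=Z$ would contradict $A_i\not\cong\theta$. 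Moreover, in the first case $U(A_i^{\nu_i})=(U(A_i^{\nu_i})\cap U(C_0))\cup Z\subseteq U(C_0)$, and symmetrically in the second, so $I$ decomposes as $I=I_0\sqcup I_1$ with $U(A_i^{\nu_i})\subseteq U(C_0)$ for $i\in I_0$ and $U(A_i^{\nu_i})\subseteq U(C_1)$ for $i\in I_1$.

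Finally the common-point hypothesis closes the argument. If both $I_0$ and $I_1$ were non-empty, then for $i_0\in I_0$, $i_1\in I_1$ the element $x$ from the first paragraph would satisfy $x\in U(A_{i_0}^{\nu_{i_0}})\cap U(A_{i_1}^{\nu_{i_1}})\subseteq U(C_0)\cap U(C_1)=Z$, contradicting $x\notin Z$. Hence one of them, say $I_1$, is empty, and then $U(B)=\bigcup_{i\in I}U(A_i^{\nu_i})\subseteq U(C_0)\subseteq U(B)$; so $U(C_1)\subseteq U(C_0)$, whence $U(C_1)=U(C_1)\cap U(C_0)=Z$ and $C_1\cong\theta$ by Lemma~\ref{ZeroSubobj}(2), contradicting that $B_1$ is non-initial. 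Thus $B$ admits no decomposition into two non-initial objects, so it is indecomposable. The only genuinely delicate point is the third step: one must first promote $A_i^{\nu_i}$ from a subobject of $A$ to a subobject of $B$ (via (UD3)) before Lemma~\ref{subobjects-decomp} is applicable, and then keep the several copies $\theta_A$, $\theta_B$, $\theta_{A_i^{\nu_i}}$ of the initial object matched up using Lemma~\ref{ZeroSubobj}; the rest is set-level bookkeeping with unions and intersections.
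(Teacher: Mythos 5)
Your proof is correct and follows essentially the same route as the paper's: assume a nontrivial decomposition of the union, apply Lemma~\ref{subobjects-decomp} to induce a decomposition of an indecomposable $A_i^{\nu_i}$, and use the common point outside $U(\theta_A)$ together with Proposition~\ref{decomp3} and Lemma~\ref{ZeroSubobj} to reach a contradiction. The only difference is organizational: the paper locates a single $A_i$ whose trace on both summands is nontrivial, whereas you first show each $A_i^{\nu_i}$ lies wholly in one summand and then let the common point force them all into the same one --- both reduce to the same application of indecomposability.
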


\begin{proof} 
Put $A^{\prime} = \bigcup_{i\in I} A_i^{\nu_i}$ and let $\iota_\cup\in\Mor(A^{\prime},A)$ be the inclusion morphism ensured by (UD4). Since $(A^{\prime}, \iota_\cup)$ is a subobject, we may suppose without loss of generality that $A=A^{\prime}$.  Remark that the proof repeats the argument of the proof of \cite[Lemma I.5.9]{KKM}. 

Assume that $((B_0, \iota_0), (B_1, \iota_1))$ is a decomposition of $A$ such that $U(B_i)\ne U(\theta_A)$ for both $i=0,1$.
Since $\bigcap_{i\in I}U(A_i^{\nu_i}) \ne U(\theta_A)$ and $U(B_0)\cup U(B_1)=U(A)$ by Proposition~\ref{decomp3}, there exists $j$ for which 
$U(B_j)\cap \bigcap_{i\in I}U(A_i^{\nu_i}) \ne U(\theta_A)$, we may w.l.o.g. assume that $j=0$. Moreover, there exists $i$ such that $U(B_1)\cap U(A_i)\ne U(\theta_{A})$. Thus $U(B_j)\cap U(A_i)\ne  U(\theta_A)$ for both $j=0,1$. 
Then by Lemma~\ref{subobjects-decomp} there exists a decomposition $((\tilde{B}_0,\tilde{\iota}_0),(\tilde{B}_0,\tilde{\iota}_0))$ of $A_i$
such that $U(\tilde{B}_j)\ne U(\theta _A)=U(\theta _{A_i})$ for both $j=0,1$.
Hence  we obtain by Proposition~\ref{decomp3} a contradiction with the hypothesis that $A_i$ is indecomposable.
\end{proof}

Now we can formulate a version of \cite[Theorem I.5.10]{KKM} valid in a general UD-category:

\begin{theorem}\label{decomposition}
Every noninitial object of a UD-category is uniquely decomposable.
\end{theorem}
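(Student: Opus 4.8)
The plan is to combine the two halves of the work already done in this section: uniqueness of decomposition, which is Proposition~\ref{Hoefnagel} once we know a UD-category is mono-extensive (Proposition~\ref{mono-extensive}), and existence of a decomposition into indecomposable objects, which is what still needs an argument. So the real content of the proof is the existence part, and the strategy there is a maximality argument in the spirit of \cite[Theorem I.5.10]{KKM}.

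First I would fix a noninitial object $A$ and consider the collection of all sets $\mathcal D$ of pairwise ``disjoint'' indecomposable subobjects of $A$ — more precisely, sets $\mathcal D=\{(B,\iota_B)\}$ of indecomposable subobjects with inclusion morphisms such that for every finite (or even arbitrary) subfamily the intersection of any one member with the union of the others, computed via (UD4), has underlying set $U(\theta_A)$; by Proposition~\ref{decomp3} this is exactly the condition that $\mathcal D$ be a decomposition of $\bigcup_{(B,\iota_B)\in\mathcal D}B^{\iota_B}$. Order these by inclusion. A chain has an upper bound given by its union (Corollary~\ref{decomp2} guarantees the union of a chain of such families is again of this form, since the disjointness conditions only ever involve finitely many members at a time, hence are preserved under directed unions), so Zorn's Lemma yields a maximal such family $\mathcal D$. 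Let $(A',\iota)$ with $A'=\bigcup_{(B,\iota_B)\in\mathcal D}B^{\iota_B}$ be the corresponding subobject; by Proposition~\ref{decomp3}, $\mathcal D$ is a decomposition of $A'$.

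The heart of the proof is then to show $A'=A$, i.e. $U(A')=U(A)$. Suppose not; pick $x\in U(A)\setminus U(A')$. By (UD6) there is an indecomposable subobject $(C,\sigma)$ of $A$ with $x\in U(C^\sigma)$. If $U(C^\sigma)\cap U(A')=U(\theta_A)$, then $\mathcal D\cup\{(C^\sigma,\iota_C)\}$ is a strictly larger family of the same kind — one checks the disjointness condition using Proposition~\ref{decomp3} and the fact that $U(C^\sigma)$ meets each $U(B^{\iota_B})$ and their union only in $U(\theta_A)$ — contradicting maximality. If on the other hand $U(C^\sigma)\cap U(A')\ne U(\theta_A)$, then, since $\mathcal D$ is a decomposition of $A'$ and $U(C^\sigma)=\bigcup$ of its intersections with the $U(B^{\iota_B})$ together with a possible part outside, some member $(B,\iota_B)\in\mathcal D$ has $U(C^\sigma)\cap U(B^{\iota_B})\ne U(\theta_A)$; but then by Lemma~\ref{union of indecomposables} the subobject $C^\sigma\cup B^{\iota_B}$ is indecomposable, and replacing $(B,\iota_B)$ in $\mathcal D$ by this strictly larger indecomposable subobject again contradicts maximality (it strictly enlarges $U(A')$ since it now contains $x$, while the disjointness conditions with the remaining members persist because $U(C^\sigma)$ was disjoint from them as well — here one may need to iterate, absorbing every member of $\mathcal D$ that $C^\sigma$ meets, and Lemma~\ref{union of indecomposables} still applies to the union of all of them).

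The main obstacle I anticipate is precisely this last bookkeeping: when $C^\sigma$ meets several members of $\mathcal D$ at once, one must absorb all of them simultaneously into a single indecomposable subobject (via Lemma~\ref{union of indecomposables} applied to $\{C^\sigma\}$ together with that subfamily, whose total intersection is nontrivial because it contains $U(C^\sigma)\cap U(B^{\iota_B})\ne U(\theta_A)$ for the relevant $B$) and then verify the replacement still satisfies the global disjointness condition of Proposition~\ref{decomp3} with the untouched members of $\mathcal D$. Once $A'=A$ is established, $\mathcal D$ is a decomposition of $A$ into indecomposable objects, so $A$ is uniquely decomposable by Proposition~\ref{Hoefnagel}, completing the proof.
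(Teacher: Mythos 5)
Your reduction of the theorem to the existence part (with uniqueness supplied by Propositions~\ref{mono-extensive} and \ref{Hoefnagel}) matches the paper, but your existence argument has a structural gap. You order the families of pairwise disjoint indecomposable subobjects by \emph{inclusion} and take a Zorn-maximal $\mathcal D$; yet the move you make when $C^\sigma$ meets $A'$ nontrivially is a \emph{replacement} (delete the members of $\mathcal D$ that $C^\sigma$ meets and insert their union with $C^\sigma$). The resulting family is not a superset of $\mathcal D$, so it does not contradict maximality in the order you chose. What it has is strictly larger coverage $U(A')$, but a family maximal under inclusion need not have maximal coverage, so the contradiction does not close. To repair this you would have to change the order (say, $\mathcal D\preceq\mathcal D'$ iff every member of $\mathcal D$ is, as a subobject, contained in a member of $\mathcal D'$) and redo the chain/upper-bound verification there, which is where the real work reappears.

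There is also a local error in the absorption step: Lemma~\ref{union of indecomposables} requires the intersection of the \emph{whole} family to differ from $U(\theta_A)$. For $\{C^\sigma\}\cup\mathcal D_0$, where $\mathcal D_0$ consists of the members of $\mathcal D$ that $C^\sigma$ meets nontrivially, the total intersection is contained in $U(B_1)\cap U(B_2)=U(\theta_A)$ as soon as $\mathcal D_0$ has two members, so the lemma does not apply as you invoke it; your justification (that the total intersection contains $U(C^\sigma)\cap U(B^{\iota_B})$ for one relevant $B$) is not correct. This part is fixable: apply the lemma first to each pair $\{C^\sigma,B\}$ with $B\in\mathcal D_0$, and then to the family of resulting subobjects $C^\sigma\cup B^{\iota_B}$, all of which contain $U(C^\sigma)\ne U(\theta_A)$. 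The paper sidesteps both difficulties with a direct construction and no Zorn's lemma: for each $a\in U(A)\setminus U(\theta_A)$ it forms $A_a$ as the union of \emph{all} indecomposable subobjects containing $a$ (indecomposable by Lemma~\ref{union of indecomposables}, since the total intersection contains $a$), shows that two such $A_a$ either coincide or meet only in $U(\theta_A)$, and reads off the decomposition from Proposition~\ref{decomp3}. You should either adopt that construction or rework your partial order before the Zorn argument can be considered complete.
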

\begin{proof}
First, we prove the existence of indecomposable decomposition of a noninitial object $A$.

For $a\in U(A)\setminus U(\theta_A)$, which exists by Lemma~\ref{ZeroSubobj}, consider the set 
\[
I_a = \left\lbrace C \, | \, (C, \nu_C) \text{ is an indecomposable subobject of } A \text{ and } a \in U(C)\right\rbrace
\]
and let $(A_a,\iota_a)$ be a subobject with the inclusion map, where $A_a = \bigcup_{(C, \nu_C)\in I_a} C^{\nu_C}$, which exists by (UD4). Then $(A_a,\iota_a)$ is an indecomposable subobject of $A$ by Lemma \ref{union of indecomposables}.

 Furthermore, if $a \neq b$ then either $U(A_a) = U(A_b)$, or $U(A_a) \cap U(A_b)=U(\theta_A)$. Indeed, let $U(A_a) \cap U(A_b)\ne U(\theta_A)$, take $z \in (U(A_a) \cap U(A_b))\setminus U(\theta_A)$, which exists by and (UD4) and consider the indecomposable object $A_z$. Since $z \in U(A_a)$, we have $(A_a,\iota_a) \in I_z$, hence $(A_a,\iota_a)$ is a subobject of $A_z$, similarly for $b$ and vice versa. Therefore $U(A_z) = U(A_a) = U(A_b)$.

Note that for each $a\in U(A)$ there exists an indecomposable subobject $(C, \nu_C)$ of $A$ such that $U(C)$ contains $a$ by (UD6), hence $a\in A_a$.
Moreover, as $A$ is not isomorphic to $\theta$, we get that $U(A)=\bigcup_{a\in  U(A)\setminus U(\theta_A)}U(A_a)$, and we have proved that the representative set of subobjects of the form $(A_x,\iota_x)$ is the desired decomposition. 

The uniqueness follows from Propositions~\ref{mono-extensive} and \ref{Hoefnagel}.
\end{proof}

The following example shows that objects of a mono-extensive category need not be decomposable in general, which illustrates  that the existence part of the assertion of Theorem~\ref{decomposition} depends strongly on the axiom (UD5).

\begin{example}\rm Consider a category whose objects are infinite pointed sets (i.e. sets with one base point $\bullet$) and an initial one-element pointed set $\{\bullet \}$ where morphisms are exactly injective maps compatible with the point $\bullet$.

Then coproducts and pull-backs can be described similarly as in the category of pointed acts: coproducts are disjoint unions with their base points glued together and pull-back diagrams are either of the form 
\[
\begin{CD}
\gamma(A)\cap \delta(B) @>>>   A \\
@VVV     @V\gamma VV \\
B   @>\delta >> M.
\end{CD}\ \ \text{or}\ \ \ \ 
\begin{CD}
\{\bullet \} @>>>   A \\
@VVV     @V\gamma VV \\
B   @>\delta >> M.
\end{CD}
\]
depending on infiniteness or finiteness of the set $\gamma(A)\cap \delta(B)$ (cf. Lemma~\ref{pull-back}).
Thus using similar arguments as in the proof of Proposition~\ref{mono-extensive} we can see that the category is mono-extensive with arbitrary coproducts.

However, since every infinite pointed set can be expressed as a coproduct of two infinite pointed subsets, the category contains no indecomposable object.
\end{example}

\section{Projective objects}

Recall that $(\Cc, U)$ is supposed to be a UD-category. 
We say that an object $P\in \mathcal{C}$ is \textit{projective}, if for any pair of objects $A, B \in \mathcal{C}$ and any pair of morphisms $\pi:A\rightarrow B$, $\alpha: P \rightarrow B$, where $\pi$ is an epimorphism, there exists a morphism $\overline{\alpha}: P \rightarrow A$ in $\mathcal{C}$ such that $\alpha = \pi\overline{\alpha}$, i.e. any diagram 
\[
\begin{CD}
@.   P \, \\
@.   @VV\alpha V \\
A   @>\pi>> B
\end{CD}
\]
in $\mathcal{C}$ with $\pi$ an epimorphism, can be completed into a commutative diagram
\[
\begin{CD}
P @=   P \\
@V\overline{\alpha}VV     @VV\alpha V \\
A   @>\pi>> B.
\end{CD}
\]
Note that the notion of projectivity is one of basic tools of category theory and issue of description of projective objects 
seems to be important task in research of any (concrete) category (see e.g. \cite[Chapter 9]{AHS} or \cite[Section III.17]{KKM}). The main goal of the section is to confirm that the 
structure of projective objects of the underlying category $\Cc$ of a UD-category $(\Cc,U)$ can be described as a coproduct of 
indecomposable projective objects in accordance with the case of categories of acts.

\begin{lemma}\label{coprod-proj}
A coproduct of a family $\left( P_i, i\in I\right)$ of projective objects is projective. 
\end{lemma}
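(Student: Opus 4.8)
The plan is to prove that a coproduct $P = \coprod_{i \in I} P_i$ of projective objects is projective by a direct verification of the lifting property, reducing it to the projectivity of each $P_i$ via the universal property of the coproduct. First I would fix an epimorphism $\pi : A \to B$ and an arbitrary morphism $\alpha : P \to B$; what I need is a lifting $\overline{\alpha} : P \to A$ with $\pi \overline{\alpha} = \alpha$. Let $(\nu_i)_{i \in I}$ denote the structural morphisms of the coproduct $P$. For each $i \in I$, I would consider the composite $\alpha \nu_i : P_i \to B$; since $P_i$ is projective and $\pi$ is an epimorphism, there is a morphism $\overline{\alpha}_i : P_i \to A$ with $\pi \overline{\alpha}_i = \alpha \nu_i$.

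Next I would invoke the universal property of the coproduct $P = \coprod_{i \in I} P_i$ applied to the family $(\overline{\alpha}_i)_{i \in I}$ of morphisms $P_i \to A$: this yields a unique morphism $\overline{\alpha} : P \to A$ satisfying $\overline{\alpha} \nu_i = \overline{\alpha}_i$ for every $i$. It remains to check that $\pi \overline{\alpha} = \alpha$. For each $i$ we have $\pi \overline{\alpha} \nu_i = \pi \overline{\alpha}_i = \alpha \nu_i$, so both $\pi \overline{\alpha}$ and $\alpha$ are morphisms $P \to B$ that agree after precomposition with every structural morphism $\nu_i$; by the uniqueness clause in the universal property of the coproduct, $\pi \overline{\alpha} = \alpha$. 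Hence $\overline{\alpha}$ is the required lifting and $P$ is projective.

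I do not expect any serious obstacle here: the argument is the standard one and uses only that $\Cc$ is coproduct complete (so the coproduct $\coprod_{i \in I} P_i$ exists as an object of $\Cc$ and is what we mean by "a coproduct of the family") together with the definition of projectivity. The one point deserving a word of care is purely formal: the statement speaks of "a coproduct of a family of projective objects," and by (UD1) such a coproduct exists for every family; the two uses of the universal property (once to produce each $\overline{\alpha}_i$ is not needed — that is projectivity of $P_i$ — and once to glue the $\overline{\alpha}_i$ into $\overline{\alpha}$, and once more, implicitly, for the uniqueness giving $\pi\overline{\alpha}=\alpha$) are exactly the two halves, existence and uniqueness, of the defining property of the coproduct. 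No appeal to the concreteness functor $U$ or to axioms (UD2)--(UD6) is required.
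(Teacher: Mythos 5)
Your argument is correct and is exactly the standard lifting-and-gluing proof; the paper does not spell it out but simply cites it as well known (the dual of a result in Ad\'amek--Herrlich--Strecker), so you are supplying the same argument the paper relies on. No gap: only coproduct completeness from (UD1) and the universal property of the coproduct are needed, as you note.
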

\begin{proof} It is well-known, see e.g. \cite[dual version of 10.40]{AHS}.
\end{proof}

\begin{lemma}\label{proj-summand}
If a coproduct of objects $P = \coprod_I P_i$ is projective, then each object $P_i$, $i\in I$, is projective. 
\end{lemma}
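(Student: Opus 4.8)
The plan is to prove the lifting property for each $P_i$ directly, by transplanting a lifting problem for $P_i$ into one for $P$. Fix $i_0\in I$ and reassociate the coproduct, writing $P$ as a coproduct $P_{i_0}\sqcup Q$ with $Q=\coprod_{j\ne i_0}P_j$ (legitimate by Corollary~\ref{decomp2}); call the structural morphisms $\sigma\colon P_{i_0}\to P$ and $\tau\colon Q\to P$. Given an epimorphism $\pi\colon A\to B$ and a morphism $\alpha\colon P_{i_0}\to B$, the obstruction to simply invoking projectivity of $P$ is that, unlike in a module category, $P_{i_0}$ need not be a retract of $P$, so there is in general no morphism $P\to B$ restricting to $\alpha$. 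To get around this I would enlarge the problem: set $C:=B\sqcup Q$ and $D:=A\sqcup Q$, with structural morphisms $k_B,k_Q$ and $j_A,j_Q$ respectively. The morphism $\rho:=\pi\sqcup\id_Q\colon D\to C$ (determined by $\rho j_A=k_B\pi$, $\rho j_Q=k_Q$) is an epimorphism, since a coproduct of epimorphisms is an epimorphism — an immediate consequence of the universal property of coproducts. Let $\gamma\colon P\to C$ be determined by $\gamma\sigma=k_B\alpha$ and $\gamma\tau=k_Q$. Projectivity of $P$ then yields $\bar\gamma\colon P\to D$ with $\rho\bar\gamma=\gamma$; put $\bar\alpha_0:=\bar\gamma\sigma\colon P_{i_0}\to D$, so that $\rho\bar\alpha_0=k_B\alpha$.

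The main step is to show $\bar\alpha_0$ factors through the subobject $(A,j_A)$ of $D$, i.e.\ $U(\bar\alpha_0)(U(P_{i_0}))\subseteq U(j_A)(U(A))$. By Corollary~\ref{decomp1} and Proposition~\ref{decomp3}, the coproduct structural morphisms decompose $D$, so $U(D)=U(j_A)(U(A))\cup U(j_Q)(U(Q))$ with intersection $U(\theta_D)$, and likewise $U(k_B)(U(B))\cap U(k_Q)(U(Q))=U(\theta_C)$. If some value $y=U(\bar\alpha_0)(x)$ lay outside $U(j_A)(U(A))$, it would have the form $U(j_Q)(z)$ with $z\in U(Q)$; applying $U(\rho)$ gives $U(k_B)(U(\alpha)(x))=U(k_Q)(z)$, which forces this element into $U(k_B)(U(B))\cap U(k_Q)(U(Q))=U(\theta_C)$, hence $z\in U(\theta_Q)$ by injectivity of $U(k_Q)$ (Lemma~\ref{subobject}) together with Lemma~\ref{ZeroSubobj}, and therefore $y\in U(\theta_D)\subseteq U(j_A)(U(A))$, a contradiction. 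So the inclusion holds, and (UD3) supplies a morphism $\bar\alpha\colon P_{i_0}\to A$ with $j_A\bar\alpha=\bar\alpha_0$.

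Finally, $k_B\pi\bar\alpha=\rho j_A\bar\alpha=\rho\bar\alpha_0=k_B\alpha$, and since $k_B$ is a monomorphism (a coproduct structural morphism) one cancels it to get $\pi\bar\alpha=\alpha$. Thus $P_{i_0}$ is projective, and as $i_0\in I$ was arbitrary the proof is complete. I expect the factorization step — verifying that $\bar\alpha_0$ lands in the copy of $A$ sitting inside $D$ — to be the point that needs care, since it is exactly where one must use that distinct coproduct summands in a UD-category meet only in an initial subobject (Propositions~\ref{coprod=uni} and~\ref{decomp3}, Lemma~\ref{ZeroSubobj}); everything else is formal diagram chasing together with the identification of images via (UD2)--(UD3).
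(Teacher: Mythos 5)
Your argument is correct and is essentially the paper's own proof: reduce to $P_{i_0}\sqcup Q$, form the coproduct morphisms $\pi\sqcup\id_Q$ and $\alpha\sqcup\id_Q$ (in the paper, $\tilde\pi$ and $\tilde\alpha$ with $Q=P_1$), lift using projectivity of $P$, and then show the restricted lift lands in the copy of $A$ because distinct summands of $B\sqcup Q$ meet only in an initial subobject, finally cancelling the monomorphism $k_B$. The only cosmetic difference is that you verify the factorization through $A$ by an element chase, where the paper forms the intersection subobject $S$ and shows $S\cong\theta$; the content is identical.
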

\begin{proof} As $\coprod_j P_i\cong P_i\sqcup (\coprod_{j\ne i}P_j)$, it is enough to prove that for
any pair of objects $P_0,P_1$, if $P_0\sqcup P_1$ is projective, then $P_0$ is projective.

Let the projective situation  
\[
\begin{CD}
@.   P_{0} \, \\
@.     @VV\alpha V \\
A   @>\pi>> B
\end{CD}
\]
be given and let $\lambda_X: X\to P_{0}\sqcup P_1$, for $X \in \left\lbrace P_0, P_1\right\rbrace $, $\mu_X:X\to A\sqcup P_1$ for $X \in \left\lbrace A, P_1\right\rbrace $ and $\nu_X:X\to B\sqcup P_1$ for $X \in \left\lbrace B, P_1\right\rbrace $ be structural coproduct morphisms, which all are monomorphisms by (UD5).
Denote by $\tilde{\alpha}: P_{0}\sqcup P_1 \rightarrow B\sqcup P_1$ a coproduct of morphisms $\alpha: P_0 \rightarrow B$ and $1_{P_1}: P_1 \rightarrow P_1$ which is uniquely determined by the universal property of the coproduct $P_{0}\sqcup P_1$, i.e. $\tilde{\alpha}\lambda_{P_0}=\nu_B\alpha$ and
$\tilde{\alpha}\lambda_{P_1}=\nu_{P_1}$.
Similarly, denote by $ \tilde\pi: A\sqcup P_1 \rightarrow B\sqcup P_1$  a coproduct of morphisms $\pi: A \rightarrow B$ and $1_{P_1}: P_1 \rightarrow P_1$, which means that $\tilde{\pi}\mu_{A}=\nu_B\pi$ and
$\tilde{\pi}\mu_{P_1}=\nu_{P_1}$. 
It is easy to compute applying the universal property of the coproduct $B\sqcup P_1$ that $\tilde\pi$ is an epimorphism since both $\pi$ and $1_{P_1}$ are epimorphisms.

Hence we obtain another projective situation: 
\[
\begin{CD}
@.   P_{0}\sqcup P_1 \, \\
@.     @VV\tilde\alpha V \\
A\sqcup P_1   @>\tilde\pi>> B\sqcup P_1
\end{CD}
\]
By the assumption, there exists a morphism $\varphi\in \Mor(P_{0}\sqcup P_1,A\sqcup P_1)$ such that $\tilde\pi\varphi=\tilde\alpha$.
Let us show that $U(\varphi\lambda_{P_0})(U(P_0))\subseteq U(\mu_A)(U(A))$. 

By (UD2), (UD3) and (UD4) there exists a subobject $(S,\iota)$ of $ A\sqcup P_1$ with the inclusion morphism $\iota$ satisfying $U(S)= U(\varphi\lambda_{P_0})(U(P_0))\cap U(\mu_{P_1})\left( U(P_1)\right)$. Since $\mu_{P_1}$ is a monomorphism, there exists a monomorphism $\sigma: S \rightarrow P_1$, such that $\mu_{P_1}\sigma=\iota$. Then $\tilde{\pi}\iota=\tilde{\pi}\mu_{P_1}\sigma=\nu_{P_1}\sigma$ is a monomorphism. 
Since 
\[
U(\tilde{\pi}\iota)(U(S))\subseteq U(\tilde{\pi})U(\varphi\lambda_{P_0})(U(P_0)) = U(\tilde{\alpha}\lambda_{P_0})(U(P_0))=  U(\nu_B\alpha)(U(P_0)) \subseteq U(\nu_B)(U(B))
\]
and 
\[
U(\tilde{\pi}\iota)(U(S)) = U(\nu_{P_1}\sigma)(U(S)) \subseteq U(\nu_{P_1})(U(P_1)),
\]
by Proposition~\ref{decomp3} and Proposition~\ref{coprod=uni} $U(\tilde{\pi}\iota)(U(S))=U(\theta_{B\sqcup P_1})$ and so $\tilde{\pi}\iota$ factorizes through the morphism $\vartheta: \theta\to B\sqcup P_1$. 
Thus $S\cong \theta$, which implies that 
\[
U(\varphi\lambda_{P_0})(U(P_0))\cap U(\mu_{P_1})( U(P_1))=U(\theta_{B\sqcup P_1}).
\] 
Since $U(\varphi\lambda_{P_0})(U(P_0))\subseteq U(\mu_{A})(U(A))\cup U(\mu_{P_1})( U(P_1))$ by Proposition~\ref{decomp3} and Proposition~\ref{coprod=uni}, we get that $U(\varphi\lambda_{P_0})(U(P_0))\subseteq U(\mu_A)(U(A))$.
In consequence, by (UD3) there exists a morphism $\tau: P_0 \rightarrow A$ such that $\mu_A \tau = \varphi\lambda_{P_0}$; therefore $\tilde{\pi}\varphi\lambda_{P_0} = \tilde{\pi}\mu_A \tau = \nu_B\pi\tau$ and on the other hand $\tilde{\pi}\varphi\lambda_{P_0} = \tilde{\alpha}\lambda_{P_0} = \nu_B\alpha$. Finally, as $\nu_B\pi\tau = \nu_B \alpha$ and the morphism $\nu_B$ is 
a monomorphism by (UD5), we have $\pi\tau = \alpha$.
\end{proof}

Now we are ready to characterize projective objects  of UD-categories:

\begin{theorem}\label{proj-decomp}
An object of a UD-category is projective if and only if it is isomorphic to a coproduct of indecomposable projective objects.
\end{theorem}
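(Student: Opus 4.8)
The plan is to combine the decomposition theorem of the previous section with the two closure properties of projectivity established above, so that the proof is a short bookkeeping argument. For the implication from right to left, suppose $A$ is isomorphic to a coproduct of indecomposable projective objects. Then $A$ is projective immediately by Lemma~\ref{coprod-proj}, and there is nothing further to check.

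For the converse, assume $A$ is projective. First I would dispose of the degenerate case: if $A\cong\theta$, then $A$ is the coproduct of the empty family of indecomposable projective objects, so $A$ has the required form (note that $\theta$ is trivially projective, since every morphism with domain $\theta$ is unique by (UD1), so any lifting problem over $\theta$ has a unique, hence commuting, solution). If $A$ is noninitial, I would invoke Theorem~\ref{decomposition} to obtain a decomposition $A\cong\coprod_{j\in J}A_j$ into indecomposable objects. Then Lemma~\ref{proj-summand}, applied to this coproduct, forces each summand $A_j$ to be projective. Hence $A$ is isomorphic to a coproduct of indecomposable projective objects, which completes the argument.

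I do not anticipate a genuine obstacle: all the substantive content has been front-loaded into Theorem~\ref{decomposition} (existence of an indecomposable decomposition of every noninitial object) and into Lemma~\ref{proj-summand} (a coproduct summand of a projective object is projective). The only point that deserves an explicit word of care is the initial object, which must be treated separately because Theorem~\ref{decomposition} is stated only for noninitial objects; but this case is immediate from (UD1). The remainder is simply the concatenation of the two cited results together with Lemma~\ref{coprod-proj} for the easy direction.
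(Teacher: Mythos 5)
Your proof is correct and follows essentially the same route as the paper: Theorem~\ref{decomposition} for the decomposition, Lemma~\ref{proj-summand} for projectivity of the summands, and Lemma~\ref{coprod-proj} for the converse. Your explicit handling of the initial object (which Theorem~\ref{decomposition} excludes) is a small point of care the paper leaves implicit, but it does not change the argument.
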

\begin{proof} If an object is projective, it possesses a decomposition by Theorem~\ref{decomposition}, which consists of projective objects  by Lemma~\ref{proj-summand}. 
The reverse implication follows immediately from Lemma~\ref{coprod-proj}.
\end{proof}

Let $A$ and $B$ be a pair of objects. 
Recall that $B$ is a {\it retract of} $A$ if there are morphisms $f\in\Mor(A,B)$ and  $g\in\Mor(B,A)$ such that $fg=1_B$.
The morphism $f$ is then called {\it retraction} and $g$ {\it coretraction}. Note that each retraction is an epimorphism and each coretraction is a monomorphism. An object $G$ of a category is said to be \textit{generator} if for any object $A \in \mathcal{C}$ there exists an index set $I$ and an epimorphism $\pi: \coprod_{i\in I} G_i \rightarrow A$ where $G_i \simeq G$.

\begin{lemma}\label{generator-retract}
If $\mathcal C$ contains a generator $G$, then every indecomposable projective object is a retract of $G$.
\end{lemma}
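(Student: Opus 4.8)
The plan is to exploit projectivity of $G$ together with the fact that an indecomposable projective object $P$ is never initial, so it has a genuine element to "catch". First I would apply the generator hypothesis to the object $P$ itself: there is an index set $I$ and an epimorphism $\pi:\coprod_{i\in I}G_i\to P$ with each $G_i\cong G$. Then I would use projectivity of $P$ applied to the identity diagram with $\pi:\coprod_{i\in I}G_i\to P$ and $\id_P:P\to P$; since $\pi$ is an epimorphism, there is a morphism $\overline{\id}:P\to\coprod_{i\in I}G_i$ with $\pi\overline{\id}=\id_P$. Thus $P$ is a retract of the coproduct $\coprod_{i\in I}G_i$. (Here I need $I\ne\emptyset$, which follows because $P$ is noninitial: if $\coprod_{i\in I}G_i$ were initial, so would be its retract $P$ by Lemma~\ref{mono-epi} and the fact that a retraction has surjective underlying map.)

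Next I would cut this coproduct down to a single copy of $G$. Write $g=\overline{\id}:P\to\coprod_{i\in I}G_i$ and $f=\pi:\coprod_{i\in I}G_i\to P$, so $fg=\id_P$. By Corollary~\ref{decomp1} the coproduct $\coprod_{i\in I}G_i$ has a decomposition $((G_i^{\nu_i},\iota_i),i\in I)$, and by Proposition~\ref{coprod=uni} (the unified description of coproducts) $U(\coprod_{i\in I}G_i)=\bigcup_{i\in I}U(G_i^{\nu_i})$. Fix an element $x\in U(P)\setminus U(\theta_P)$, which exists by Lemma~\ref{ZeroSubobj} since $P$ is noninitial. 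Then $U(g)(x)$ lies in $\bigcup_{i\in I}U(G_i^{\nu_i})$, so there is some index $i_0$ with $U(g)(x)\in U(G_{i_0}^{\nu_{i_0}})$.

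Now I would argue that $P$ is already a retract of that single summand $G_{i_0}$. Consider the subobject $(S,\mu)$ of $P$ with $U(S)=U(f)(U(G_{i_0}^{\nu_{i_0}}))$, obtained from (UD2)–(UD3) applied to $f\iota_{i_0}:G_{i_0}^{\nu_{i_0}}\to P$; this gives $f\iota_{i_0}=\mu h$ for some $h:G_{i_0}^{\nu_{i_0}}\to S$. On the other hand, by (UD3) applied to $g:P\to\coprod_{i\in I}G_i$ and the subobject $(G_{i_0}^{\nu_{i_0}},\iota_{i_0})$ — which requires $U(g)(U(P))\subseteq U(G_{i_0}^{\nu_{i_0}})$ — we would get a retraction onto $G_{i_0}$. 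The latter containment need not hold for all of $P$, only for the element $x$; so the cleaner route is: $P$ is indecomposable, and the image $U(f)(U(G_{i_0}^{\nu_{i_0}}))=U(S)$ contains $U(f)(U(g)(x))=x\notin U(\theta_P)$, hence $U(S)\ne U(\theta_P)$, so $S\not\cong\theta$ by Lemma~\ref{ZeroSubobj}. Meanwhile the restriction of $f$ to the summand $G_{i_0}$, namely $f\iota_{i_0}$, together with the composite $g':P\to\coprod G_i$ followed by the structural projection — here one uses that $\coprod_{i\in I}G_i\cong G_{i_0}\sqcup(\coprod_{i\ne i_0}G_i)$ and the coproduct projection $p:\coprod_{i\in I}G_i\to G_{i_0}$ exists by the universal property — yields $f\iota_{i_0}\,(p\,g):P\to P$. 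One checks via (UD5), Proposition~\ref{decomp3} and indecomposability of $P$ that this endomorphism is an isomorphism (it is not the zero/initial-factoring morphism, since it does not kill $x$), and composing with its inverse exhibits $P$ as a retract of $G_{i_0}\cong G$.

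The main obstacle is the last step: controlling the endomorphism $\psi:=f\iota_{i_0}\,p\,g$ of the indecomposable object $P$. One has to show $\psi$ is an isomorphism, equivalently (Lemma~\ref{mono-epi}) that $U(\psi)$ is a bijection. Surjectivity/injectivity does not come for free; the right tool is that $U(\id_P)=U(f)U(g)$ decomposes through the two summands $U(G_{i_0}^{\nu_{i_0}})$ and $U(\bigcup_{i\ne i_0}G_i^{\nu_i})$, whose intersection is $U(\theta)$ by Proposition~\ref{decomp3}, so $U(P)$ is covered by $U(\psi)(U(P))$ together with a complementary piece $U(S')$ with $S'$ the image of the other summands; indecomposability of $P$ (via Proposition~\ref{decomp3} and Lemma~\ref{union of indecomposables}, since $U(S)\cap U(S')=U(\theta_P)$) forces $U(S')=U(\theta_P)$, whence $U(\psi)$ is onto, and a symmetric argument on $g f\iota_{i_0}$ (which we may assume, restricting the coproduct suitably) gives injectivity. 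This "either one summand captures everything or $P$ decomposes" dichotomy, already used in the proof of Lemma~\ref{union of indecomposables}, is the crux, and I expect it to require the same careful bookkeeping with $U(\theta_A)$ as there.
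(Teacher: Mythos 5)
Your first half is exactly right and matches the paper: the generator gives an epimorphism $\pi:\coprod_{i\in I}G_i\to P$, and projectivity of $P$ lifts $\id_P$ to a section $g$ with $\pi g=\id_P$. The gap is in the second half, and it occurs precisely where you dismiss the correct route. You write that the containment $U(g)(U(P))\subseteq U(G_{i_0}^{\nu_{i_0}})$ ``need not hold for all of $P$, only for the element $x$'' --- but it does hold for all of $P$, and proving this is the whole point. Since $g$ has a left inverse it is a monomorphism, so $P^{g}\cong P$ is an \emph{indecomposable subobject} of $\coprod_iG_i$. Applying Lemma~\ref{subobjects-decomp} to this subobject and the canonical decomposition $((G_i^{\nu_i},\iota_i),i\in I)$ of the coproduct yields a decomposition of $P^{g}$ into the pieces with underlying sets $U(P^{g})\cap U(G_i^{\nu_i})$; indecomposability of $P^{g}$ together with Proposition~\ref{decomp3} forces all but one of these pieces to be initial, i.e.\ $U(g)(U(P))\subseteq U(\nu_{i_0})(U(G_{i_0}))$ for a single $i_0$. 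Then (UD3) factors $g=\nu_{i_0}\varphi$ and $\pi\nu_{i_0}\varphi=\pi g=\id_P$ finishes the proof. You instead decompose the \emph{images under $f=\pi$} of the two pieces, but $f$ is only an epimorphism, so your claim $U(S)\cap U(S')=U(\theta_P)$ is unjustified ($f$ may identify elements coming from different summands), and the dichotomy you invoke does not apply to these images.

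The second, independent defect is the ``coproduct projection'' $p:\coprod_{i\in I}G_i\to G_{i_0}$. The universal property of a coproduct produces morphisms \emph{out of} the coproduct only from a choice of morphisms $G_j\to G_{i_0}$ for every $j$; in $S$-$\oA$ there is no zero morphism, so no choice makes $p$ ``kill'' the other summands, and with any choice the endomorphism $\psi=f\iota_{i_0}pg$ need not agree with $fg=\id_P$ on elements of $P$ that $g$ sends outside $G_{i_0}^{\nu_{i_0}}$ --- which is exactly the case you have not excluded. So the final endomorphism argument cannot be completed as stated; once the containment $U(g)(U(P))\subseteq U(\nu_{i_0})(U(G_{i_0}))$ is established as above, no projection is needed at all.
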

\begin{proof} We generalize arguments of \cite[Propositions III.17.4 and III.17.7]{KKM}.

Let $P$ be an indecomposable projective object. Since $G$ is a generator, there are a coproduct $\coprod_iG_i$ of objects $G_i\cong G$ with structural morphisms $\nu_i\in\Mor(G_i,\coprod_iG_i)$ and an epimorphism $\pi\in \Mor (\coprod_iG_i,P)$.
Moreover, there exists a (mono)morphism $\gamma\in\Mor(P,\coprod_iG_i)$ for which $\pi\gamma=1_P$ due to the projectivity of $P$.
Note that $P\cong P^\gamma$ by Lemma~\ref{mono-epi} and 
there exists a decomposition $((H_i,\mu_i), i\in I)$ of $P^\gamma$ for which 
$U(H_i)=U(P^\gamma)\cap U(G_i^{\nu_i})=U(\gamma)(U(P))\cap U(\nu_i)(U(G_i))$ for each $i\in I$ by Lemma~\ref{subobjects-decomp}.
As $P^\gamma$ is indecomposable, there exists an $i\in I$  such that $U(\gamma)(U(P))=U(P^\gamma)\subseteq U(\nu_i)(U(G_i))$ by Proposition~\ref{decomp3}, hence there exists a morphism $\varphi\in \Mor(P,G_i)$ such that $\nu_i\varphi=\gamma$ by (UD3). 
Thus $\pi\nu_i\varphi=\pi\gamma=1_P$ which shows that $\pi\nu_i$ is the desired retraction.
\end{proof}

\section{Connected objects}

In this section we describe connected objects in
a UD-category $(\mathcal C,U)$.

Let $C$ be an object, $\mathcal A=(A_i,i\in I)$ a family of objects and $(\coprod_{i\in I} A_i, \{\nu_i\}_{i\in I})$ a coproduct of the family $\Ac$. Using the covariant functor $\Mor(C,-)$ from $\Cc$ to $\Set$, we define a natural morphism in the category $\Set$ 
\[
\Psi_\mathcal A^C: \coprod_{i\in I}\Mor(C,A_i) \to\Mor(C,\coprod_{i\in I} A_i) 
\]
which is the unique morphism such that the following square is commutative for all $i\in I$
\[
\begin{CD}
\Mor\left(C, A_i\right) @>\mu_i>> \coprod_I \Mor\left(C, A_i\right) \\
@V \Mor\left( C, \nu_i\right) VV  @V\Psi^C_\mathcal A VV \\
\Mor\left(C, \coprod_I A_i\right) @=\Mor\left(C, \coprod_I A_i\right)\\
\end{CD}
\]
where $\mu_i: \Mor\left(C, A_i\right) \rightarrow \coprod_I \Mor\left(C, A_i\right)$ is a coproduct structural inclusion in $\Set$.
Since coproducts of objects in $\Set$ are isomorphic to disjoint unions of the corresponding objects, we have $\coprod_I \Mor\left(C, A_i\right) = \dot{\bigcup}\, \Mor\left(C, A_i\right)$ and we can describe $\Psi_\mathcal A^C$ explicitly as $\Psi_\mathcal A^C(\alpha)=\nu_i\alpha$ for each index $i$ satisfying $\alpha\in \Mor(C, A_i)$.

It is worth mentioning that it is natural to consider morphisms 
\[
\tilde{\Psi}_\mathcal A^C: \coprod_{i\in I}\Mor(U(C),U(A_i)) \to\Mor(U(C),U(\coprod_{i\in I} A_i))= \Mor(U(C),\bigcup_{i\in I}U( A_i))
\]
as we deal with concrete category. Since $U$ is a faithful functor, such a concept is equivalent to original one, however it seems to be technically more difficult.

\begin{lemma}\label{Psi} Let $C$ be an object, $I$ an index set consisting of at least two elements,  $\mathcal A=\{A_i\mid i\in I\}$ a family of objects. Suppose that $(A, \{\nu_i\}_{i\in I})$ is a coproduct of the family $\Ac$ and $\alpha, \beta\in \coprod_{i\in I} \Mor\left(C, A_i\right)= \dot{\bigcup}_{i\in I}\, \Mor\left(C, A_i\right)$.
\begin{enumerate}
\item[(1)] If $\alpha\ne \beta$ and $\Psi_\mathcal A^C(\alpha)=\Psi_\mathcal A^C(\beta)$, 
then there exist $i\ne j$ such that 
$\alpha\in \Mor\left(C, A_i\right)$, $\beta\in \Mor\left(C, A_j\right)$ and
 $U(\nu_i\alpha)(U(C))=U(\theta_{A})=U(\nu_j\beta)(U(C))$.
\item[(2)] If $i,j\in I$ and  $\alpha\in \Mor\left(C, A_i\right)$ and $\beta\in \Mor\left(C, A_j\right)$ such that 
$U(\alpha)(U(C))=U(\theta_{A_i})$ and 
$U(\beta)(U(C))=U(\theta_{A_j})$, then $\Psi_\mathcal A^C(\alpha)=\Psi_\mathcal A^C(\beta)$.
\item[(3)] $\Psi_\mathcal A^C$ is injective (i.e. it is a monomorphism in the category $\Set$) if and only if $\Mor\left(C, \theta\right)=\emptyset$.
\end{enumerate}
\end{lemma}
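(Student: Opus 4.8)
The plan is to analyze the map $\Psi_\mathcal A^C$ using its explicit description $\Psi_\mathcal A^C(\alpha)=\nu_i\alpha$ for $\alpha\in\Mor(C,A_i)$, together with the facts that each structural morphism $\nu_i$ is a monomorphism (UD5), that $A=\bigcup_iA_i^{\nu_i}$ with $U(A)=\bigcup_i U(A_i^{\nu_i})$ by Proposition~\ref{coprod=uni}, and the characterization of decompositions in Proposition~\ref{decomp3}, by which $U(A_i^{\nu_i})\cap\bigcup_{j\ne i}U(A_j^{\nu_j})=U(\theta_A)$. Throughout I will use that $U$ is faithful, so equality of morphisms can be tested on underlying maps.

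For part (1): suppose $\alpha\ne\beta$ but $\Psi_\mathcal A^C(\alpha)=\Psi_\mathcal A^C(\beta)$, say $\alpha\in\Mor(C,A_i)$ and $\beta\in\Mor(C,A_j)$. If $i=j$, then $\nu_i\alpha=\nu_i\beta$ and since $\nu_i$ is a monomorphism we would get $\alpha=\beta$, a contradiction; hence $i\ne j$. Now $U(\nu_i\alpha)(U(C))=U(\nu_i)(U(\alpha)(U(C)))\subseteq U(\nu_i)(U(A_i))=U(A_i^{\nu_i})$, and symmetrically $U(\nu_j\beta)(U(C))\subseteq U(A_j^{\nu_j})$; but these two sets of maps are equal (being $U(\Psi^C_\mathcal A(\alpha))=U(\Psi^C_\mathcal A(\beta))$), so their common image lies in $U(A_i^{\nu_i})\cap U(A_j^{\nu_j})\subseteq U(A_i^{\nu_i})\cap\bigcup_{k\ne i}U(A_k^{\nu_k})=U(\theta_A)$. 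Thus $U(\nu_i\alpha)(U(C))\subseteq U(\theta_A)$, and since it is nonempty (as $C$ is an object, $U(C)\ne\emptyset$ — or, if $U(C)=\emptyset$ one must note $\Mor(C,-)$ behaves trivially; I will need to be mildly careful, but the inclusion $U(\nu_i\alpha)(U(C))\subseteq U(\theta_A)$ already gives what is claimed, and Lemma~\ref{ZeroSubobj}(2) then upgrades it to equality with $U(\theta_A)$). This gives the claim; note the equality $U(\nu_i\alpha)(U(C))=U(\theta_A)=U(\nu_j\beta)(U(C))$.

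For part (2): given $\alpha\in\Mor(C,A_i)$ with $U(\alpha)(U(C))=U(\theta_{A_i})$ and $\beta\in\Mor(C,A_j)$ with $U(\beta)(U(C))=U(\theta_{A_j})$, I compute $U(\Psi^C_\mathcal A(\alpha))=U(\nu_i\alpha)=U(\nu_i)U(\alpha)$, whose image is $U(\nu_i)(U(\theta_{A_i}))=U(\theta_A)$ by Lemma~\ref{ZeroSubobj}(1) applied to the subobject $(A_i,\nu_i)$ (identifying $\theta_{A_i}$ via $\nu_i$ with $\theta_A$), and similarly for $\beta$. Since both $\nu_i\alpha$ and $\nu_j\beta$ are morphisms $C\to A$ whose underlying maps factor through the single subset $U(\theta_A)$ and agree with the unique morphism $C\to\theta_A\hookrightarrow A$ (the initial object receives at most — here exactly, as a retract of any subobject — one morphism up to the canonical identification, and $U(\nu_i)|_{U(\theta_{A_i})}=U(\nu_j)|_{U(\theta_{A_j})}$ by Lemma~\ref{ZeroSubobj}), faithfulness of $U$ gives $\Psi^C_\mathcal A(\alpha)=\Psi^C_\mathcal A(\beta)$; the cleanest route is to observe $U(\nu_i\alpha)$ and $U(\nu_j\beta)$ are both the constant map $U(C)\to U(\theta_A)$ (a singleton when $U(\theta_A)$ is a singleton, which holds in the categories of acts; in general I argue via the universal property that both equal $\vartheta_A\circ(\text{unique }C\to\theta_A)$).

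For part (3): $\Psi_\mathcal A^C$ fails to be injective iff there exist distinct $\alpha,\beta$ with the same image. If $\Mor(C,\theta)\ne\emptyset$, pick such a morphism; composing with the inclusions $\theta_{A_i}\to A_i$ for two distinct indices $i,j$ (available since $|I|\ge2$) produces $\alpha\in\Mor(C,A_i)$, $\beta\in\Mor(C,A_j)$ with $U(\alpha)(U(C))=U(\theta_{A_i})$, $U(\beta)(U(C))=U(\theta_{A_j})$, hence distinct elements of the disjoint union $\dot{\bigcup}_i\Mor(C,A_i)$ with $\Psi^C_\mathcal A(\alpha)=\Psi^C_\mathcal A(\beta)$ by part (2); so $\Psi^C_\mathcal A$ is not injective. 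Conversely, if $\Psi^C_\mathcal A$ is not injective, part (1) yields $\alpha\in\Mor(C,A_i)$ with $U(\nu_i\alpha)(U(C))=U(\theta_A)$; since $\nu_i$ is a monomorphism with $U(\nu_i)$ injective (Lemma~\ref{subobject}), $U(\alpha)(U(C))=U(\theta_{A_i})$, so by (UD3) $\alpha$ factors through the subobject $\theta_{A_i}\cong\theta$ of $A_i$, giving a morphism $C\to\theta$, i.e. $\Mor(C,\theta)\ne\emptyset$. The main obstacle I anticipate is the edge case $U(C)=\emptyset$ (where $\Mor(C,-)$ may be a singleton for every codomain and the ``constant map'' language degenerates): I would handle it by noting that then $\Mor(C,\theta)\ne\emptyset$ automatically — the empty underlying map lifts — consistent with $\Psi^C_\mathcal A$ being non-injective exactly when $C$ maps to several $A_i$'s, or alternatively observe the paper's convention that objects have nonempty underlying set in $S$-$\oA$ while in $S$-$\A_0$ the initial object is $\{0\}$ so $U(C)\ne\emptyset$ always; I will state the argument so that it is robust to this, relying on (UD1) that there is at most one morphism $A\to\theta$ only where needed and on Lemma~\ref{ZeroSubobj} for the subset bookkeeping.
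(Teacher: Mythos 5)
Your proof is correct and follows essentially the same route as the paper's: (UD5) to rule out $i=j$ in (1), Proposition~\ref{decomp3}/Corollary~\ref{decomp1} plus Lemma~\ref{ZeroSubobj} to pin the common image down to $U(\theta_A)$, the bound $|\Mor(C,\theta_A)|\le 1$ from (UD1) together with factorization through $\theta_A$ for (2), and (UD3) in both directions of (3). The edge cases you flag (empty $U(C)$, upgrading $\subseteq U(\theta_A)$ to equality) are handled by Lemma~\ref{ZeroSubobj}(2) exactly as in the paper, so no changes are needed.
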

\begin{proof} 
(1) If there exists an $i$ for which $\alpha, \beta\in \Mor\left(C, A_i\right)$, then $\nu_i\alpha=\nu_i\beta$, hence $\alpha=\beta$ as $\nu_i$ is a monomorphism by (UD5).
In consequence, the hypotheses $\alpha\ne \beta$ and 
$\Psi_\mathcal A^C(\alpha)=\Psi_\mathcal A^C(\beta)$ 
imply that there exists  $i\ne j$ such that 
$\alpha\in \Mor\left(C, A_i\right)$, $\beta\in \Mor\left(C, A_j\right)$, so we get
\[
U(\nu_i\alpha)(U(C))=U(\Psi_\mathcal A^C(\alpha))(U(C))=U(\Psi_\mathcal A^C(\beta))(U(C))=U(\nu_j\beta)(U(C)).
\]
Since $U(\nu_i)(U(A_i))\cap U(\nu_j)(U(A_j))=U(\theta_{A})$ by Corollary~\ref{decomp1} and Proposition~\ref{decomp3} and since 
\[
U(\nu_i\alpha)(U(C))=U(\nu_j\beta)(U(C))\subseteq U(\nu_i)(U(A_i))\cap U(\nu_j)(U(A_j))=U(\theta_{A}),
\]
we get that $U(\nu_i)\alpha(U(C))=U(\nu_j\beta)(U(C))=U(\theta_{A}$) by Lemma~\ref{ZeroSubobj}.

(2) Since $U(\nu_i)(U(\theta_{A_i}))=U(\theta_{A})=U(\nu_j)(U(\theta_{A_j}))$, we have
\[
U(\Psi_\mathcal A^C(\alpha))(U(C))=U(\nu_i\alpha)(U(C))=U(\nu_j\beta)(U(C))=U(\Psi_\mathcal A^C(\beta))(U(C))=
U(\theta_{A})
\]
again by the same argument as in (1) using Corollary~\ref{decomp1},  Lemma~{ZeroSubobj} and Proposition~\ref{decomp3}.
As $U$ is a faithful functor, both morphisms $\Psi_\mathcal A^C(\alpha), \Psi_\mathcal A^C(\beta)$ can be viewed as elements of $\Mor(C,\theta_A)$. Since $|\Mor(C,\theta_A)|=|\Mor(C,\theta)|\le 1$ by (UD1), we get the required equality $\Psi_\mathcal A^C(\alpha)=\Psi_\mathcal A^C(\beta)$.

(3) If $\Mor\left(C, \theta\right)\ne\emptyset$, there exists  $\alpha_i\in \Mor(C,A_i)$ such that $U(\alpha_i)(U(C))=U(\theta_{A_i})$ for all $i\in I$ by Lemma~\ref{ZeroSubobj} again. 
Thus $\Psi_\mathcal A^C(\alpha_i)=\Psi_\mathcal A^C(\alpha_j)$ for all $i,j\in I$ by (2), which implies that $\Psi_\mathcal A^C$ is not injective.

On the other hand, if $\Psi_\mathcal A^C$ is not injective, then there exists $i$ and $\alpha\in \Mor\left(C, A_i\right)$ such that 
$U(\nu_i\alpha)(U(C))=U(\theta_{A})$ by (1). As $\theta_{A}\cong\theta$, there exists a morphism in $\Mor\left(C, \theta\right)$ by (UD3).
\end{proof}

Since there is no morphism of a nonempty act $C$ into the empty act 
$\emptyset$,
all mappings $\Psi_\mathcal A^C$ are injective in the category $\mathcal S$-$\oA$ by Lemma~\ref{Psi}(3),
similarly to the case of abelian categories (cf \cite[Lemma~1.3]{KZ}).
Applying the same assertion, we can see that it is not the case of the category  $\mathcal S_0$-$\A_0$.

\begin{example} {\rm
If $\mathcal S_0=(S_0,\cdot, 1)$ is a monoid with a zero element 
(for example $(\mathbb Z,\cdot,1)$), $C$ is a right $S_0$-act and $\mathcal A=\{A_i\mid i\in I\}$ 
is a family of $S_0$-acts contained in the category $\mathcal S_0$-$\A_0$ satisfying
$|\mathcal A|\ge 2$, then the mapping $\Psi_\mathcal A^C$ is not injective
by Lemma~\ref{Psi}(3).

In particular, if we put $C=A_i=\{0\}$ for every $i\in I$, then 
$|\coprod_I \Mor\left(C, A_i\right)|=|I|$ and
$|\Mor\left(C, \coprod_I A_i\right)|=1$, so the mapping $\Psi_\mathcal A^C$ glues together all morphisms of the arbitrarily large set $\coprod_I \Mor\left(C, A_i\right)$.
}
\end{example}

Using the notation of the mapping $\Psi_\mathcal A^C$ we are ready to generalize abelian-category definition of a connected object to UD-categories.

We say that an object $C$ is \emph{$\mathcal D$-connected} (or connected with respect to $\mathcal D$), if the morphism  
$\Psi_\mathcal A^C$
is surjective for each family $\mathcal A$ of objects from the class 
$\mathcal D$ and $C$ is \emph{connected} if it is $O_{\mathcal C}$-connected 
for the class $O_{\mathcal C}$ of all objects of the category $\mathcal C$. Finally, an object $C$ is called \emph{autoconnected}, if it is $ \{C\}$-connected.
Observe that every connected object is $\mathcal D$-connected for an arbitrary class $\mathcal D$ of objects, in particular, it is autoconnected.

Let $\mathcal D$ be a class of objects of the category $\mathcal C$
and denote by $\mathcal D^{\coprod}=\{\coprod_{i} D_i\mid D_i\in \mathcal D\}$ the class of all coproducts of all families of objects of $\mathcal D$.

Let us formulate a non-abelian version of \cite[Proposition 2.1]{GNM} (cf. also \cite[Theorem 2.5]{KZ}):

\begin{theorem}\label{DcompChar} The following conditions are equivalent for an object
$C$ and a class of objects $\mathcal D$:
\begin{enumerate}
\item[(1)] $C$ is $\mathcal D$-connected,
\item[(2)] for each pair of objects $A_1\in \mathcal D$ and 
$A_2\in \mathcal D^{\coprod}$ and each  morphism
$f\in\Mor(C, A_1\sqcup A_2)$ there exists $i\in\{1,2\}$ such that
$f$ factorizes through $\nu_i$,
\item[(3)] for each  pair of objects $A_1\in \mathcal D$ and 
$A_2\in \mathcal D^{\coprod}$ and each morphism
$f\in\Mor(C, A_1\sqcup A_2)$ there exists $i\in\{1,2\}$ such that $U(f)(U(C))\subseteq U(\nu_i)(U(A_i))$,
\item[(4)] for each  family $(A_i, i\in I)$ of objects of $\mathcal D$ and each morphism  
$f\in\Mor(C, \coprod_{i\in I} A_i)$ there exists $i\in I$ such that
$f$ factorizes through $\nu_i$,
\item[(5)] for each  family $(A_i, i\in I)$ of objects of $\mathcal D$ and each morphism  $f\in\Mor(C, \coprod_{i\in I} A_i)$ there exists $i\in I$ such that
$U(f)(U(C))\subseteq U(\nu_i)(U(A_i))$,
\end{enumerate}
where $\nu_i$ denotes the structural morphism of a corresponding coproduct $A_1\sqcup A_2$ or $\coprod_{i\in I} A_i$.
\end{theorem}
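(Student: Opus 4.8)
The plan is to prove the chain of implications $(1)\Leftrightarrow(4)$, $(2)\Leftrightarrow(3)$, $(4)\Leftrightarrow(5)$, $(4)\Rightarrow(2)$ and $(3)\Rightarrow(5)$, which together yield all the equivalences; for one‑element families the assertion is trivial, the relevant $\Psi_\mathcal A^C$ being a bijection, so I may work with index sets of size at least two where it matters. The equivalence $(1)\Leftrightarrow(4)$ is just the explicit description of $\Psi_\mathcal A^C$: since $\Psi_\mathcal A^C(\alpha)=\nu_i\alpha$ whenever $\alpha\in\Mor(C,A_i)$, surjectivity of $\Psi_\mathcal A^C$ for a family $\mathcal A=(A_i)_{i\in I}$ says exactly that every $f\in\Mor(C,\coprod_i A_i)$ has the form $\nu_i\alpha$, i.e. factors through some $\nu_i$; requiring this for all families from $\mathcal D$ is precisely $(4)$, and conversely. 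Next, $(2)\Leftrightarrow(3)$ and $(4)\Leftrightarrow(5)$ reduce to the observation that a morphism $f$ factors through a coproduct structural morphism $\nu_i$ if and only if $U(f)(U(C))\subseteq U(\nu_i)(U(A_i))$: the forward direction follows by applying the faithful functor $U$, and for the converse one uses the subobject $(A_i^{\nu_i},\iota_i)$ from (UD2), Lemma~\ref{subobject} (so that $\nu_i=\iota_i\tilde{\nu}_i$ with $\tilde{\nu}_i$ an isomorphism) and (UD3) to obtain a factorization of $f$ through $\iota_i$, hence through $\nu_i$.

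For $(4)\Rightarrow(2)$ I would simply regroup the coproduct: if $A_1\in\mathcal D$ and $A_2=\coprod_{k\in K}B_k\in\mathcal D^{\coprod}$, then $A_1\sqcup A_2$ is, up to canonical isomorphism, the coproduct of the family consisting of $A_1$ together with all the $B_k$, each a member of $\mathcal D$; by $(4)$ a morphism $f\in\Mor(C,A_1\sqcup A_2)$ factors through the structural morphism of one of these objects, and since that structural morphism is either $\nu_1$ or $\nu_2\rho_k$ (with $\rho_k\colon B_k\to A_2$ structural), $f$ factors through $\nu_1$ or through $\nu_2$.

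The decisive step is $(3)\Rightarrow(5)$, and this is where I expect the only genuine difficulty, since one must pass from a binary splitting to an arbitrary, possibly infinite, one and the naive idea of peeling off one summand at a time and iterating does not terminate; the remedy is to single out the correct summand at once by means of a point of the image. Let $(A_i)_{i\in I}$ be a family from $\mathcal D$ with $|I|\ge 2$, let $A=\coprod_{i\in I}A_i$ with structural morphisms $\nu_i$, and let $f\in\Mor(C,A)$. By Corollary~\ref{decomp1} and Proposition~\ref{decomp3} the subobjects $A_i^{\nu_i}$ decompose $A$, so $U(A)=\bigcup_{i}U(\nu_i)(U(A_i))$ and $U(\nu_i)(U(A_i))\cap\bigcup_{j\ne i}U(\nu_j)(U(A_j))=U(\theta_A)$ for every $i$; in particular $U(\theta_A)\subseteq U(\nu_i)(U(A_i))$ for each $i$. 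If $U(f)(U(C))\subseteq U(\theta_A)$ we are done. Otherwise choose $y_0\in U(f)(U(C))\setminus U(\theta_A)$; as $y_0\in U(A)=\bigcup_i U(\nu_i)(U(A_i))$ and these sets meet pairwise only in $U(\theta_A)$, there is a unique index $i_0$ with $y_0\in U(\nu_{i_0})(U(A_{i_0}))$. Now $A$ is canonically isomorphic to $A_{i_0}\sqcup\coprod_{j\ne i_0}A_j$, and this identifies the structural morphism of $A_{i_0}$ with $\nu_{i_0}$ and the image of the structural morphism of $\coprod_{j\ne i_0}A_j\in\mathcal D^{\coprod}$ with $\bigcup_{j\ne i_0}U(\nu_j)(U(A_j))$. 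Applying $(3)$ to $A_{i_0}\in\mathcal D$, $\coprod_{j\ne i_0}A_j\in\mathcal D^{\coprod}$ and $f$ gives either $U(f)(U(C))\subseteq U(\nu_{i_0})(U(A_{i_0}))$, which is the conclusion of $(5)$, or $U(f)(U(C))\subseteq\bigcup_{j\ne i_0}U(\nu_j)(U(A_j))$; the latter would put $y_0$ into $\bigcup_{j\ne i_0}U(\nu_j)(U(A_j))$, contradicting the choice of $i_0$ together with $y_0\notin U(\theta_A)$. Hence the first alternative holds, which closes the chain of implications and finishes the proof.
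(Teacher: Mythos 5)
Your proposal is correct and follows essentially the same chain of implications as the paper: $(1)\Leftrightarrow(4)$ from the explicit form of $\Psi_{\mathcal A}^C$, $(4)\Leftrightarrow(5)$ and $(2)\Leftrightarrow(3)$ via (UD2)/(UD3), $(4)\Rightarrow(2)$ by regrouping the coproduct, and $(3)\Rightarrow(5)$ using the decomposition facts from Corollary~\ref{decomp1} and Proposition~\ref{decomp3}. The only cosmetic difference is in $(3)\Rightarrow(5)$, where you select a witness point $y_0\notin U(\theta_A)$ and its unique summand rather than intersecting the sets $\bigcup_{i\ne j}U(A_i^{\nu_i})$ over all $j$ as the paper does; both arguments rest on the same disjointness property of the decomposition.
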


\begin{proof} (1)$\Rightarrow$(4) Let $\mathcal A=\{A_i\mid i\in I\}$ be a family of objects of the class $\mathcal D$ and 
$f\in \Mor\left(C,\coprod_{i\in I} A_i\right)$. Since $C$ is $\mathcal D$-connected, the mapping  $\Psi_\mathcal A^C$ is surjective by definition, hence there exists  $i$ and $\alpha\in \Mor\left(C, A_i\right)$ such that $f=\nu_i\alpha$.

(4)$\Rightarrow$(1) Let $f\in \Mor\left(C,\coprod_{i\in I} A_i\right)$ for a family $\Ac = (A_i, i\in I) \subseteq \mathcal{D}$. Then there exists $i\in I$ and $\tilde{f}\in \Mor\left(C, A_i\right)$ such that $f=\nu_i\tilde{f}$, hence $\Psi_\mathcal A^C(\tilde{f})=f$.

(4)$\Rightarrow$(5) Since there exists $i\in I$ and $\tilde{f}\in \Mor\left(C, A_i\right)$ for which $f=\nu_i\tilde{f}$ we get 
\[
U(f)(U(C))=U(\nu_i)U(\tilde{f})(U(C))\subseteq U(\nu_i)(U(A_i)).
\]

(5)$\Rightarrow$(4) It is a direct consequence of (UD3).

The equivalence (2)$\Leftrightarrow$(3) is a special case of   (4)$\Leftrightarrow$(5).

(3)$\Rightarrow$(5) Let $f\in\Mor(C, \coprod_{i\in I} A_i)$
for a  family $(A_i, i\in I)$ of objects of $\mathcal D$,
put $A=\coprod_{i\in I} A_i$ and assume to a contrary that $U(f)U(C)\nsubseteq U(\nu_i)U(A_i)$ for all $i\in I$. Then by (3) $U(f)U(C)\subseteq \bigcup_{i\ne j}U(A_i^{\nu_i})$ for every $j\in I$, hence by Proposition~\ref{decomp3}
\[
U(f)U(C)\subseteq \bigcap_{j\in I}\bigcup_{i\ne j}U(A_i^{\nu_i})=U(\theta_A)\cup 
\bigcap_{j\in I}U(A)\setminus U(A_j^{\nu_j})=U(\theta_A),
\]
a contradiction. 

The implication (4)$\Rightarrow$(2) is clear, since $A\in \mathcal D^{\coprod}$ if and only if there exists a family  $\mathcal A=\{A_i\mid i\in I\}$ of objects of $\mathcal D$ satisfying $A=\coprod_{i\in I} A_i$.
\end{proof}

Let us reformulate the Theorem~\ref{DcompChar} for the particular (but important) case of connectedness:

\begin{corollary}\label{compChar} The following conditions are equivalent for an object $C$:
\begin{enumerate}
\item[(1)] $C$ is connected,
\item[(2)] for every pair of objects $A_1$ and $A_2$ and each  morphism
$f\in\Mor(C, A_1\sqcup A_2)$ there exists $i\in\{1,2\}$ such that
$f$ factorizes through the structural coproduct morphism $\nu_i$,
\item[(3)] for every pair of objects $A_1$ and $A_2$ and each morphism
$f\in\Mor(C, A_1\sqcup A_2)$ either $U(f)(U(C))\subseteq U(\nu_1)(U(A_1))$ or $U(f)(U(C))\subseteq U(\nu_2)(U(A_2))$, where 
$\nu_i$, $i=1,2$, is the structural coproduct morphism.
\end{enumerate}
\end{corollary}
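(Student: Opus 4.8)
The plan is to obtain this corollary as the special case of Theorem~\ref{DcompChar} in which the class $\mathcal D$ is taken to be $O_{\mathcal C}$, the class of all objects of $\mathcal C$. By the very definition of connectedness, an object $C$ is connected if and only if it is $O_{\mathcal C}$-connected, so condition (1) of the corollary is literally condition (1) of Theorem~\ref{DcompChar} for $\mathcal D=O_{\mathcal C}$.

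The one point worth recording is that for this choice of $\mathcal D$ we have $\mathcal D^{\coprod}=O_{\mathcal C}$ up to isomorphism: the inclusion $\mathcal D^{\coprod}\subseteq O_{\mathcal C}$ holds because $\mathcal C$ is coproduct complete, while conversely every object $A$ is isomorphic to the coproduct $\coprod_{i\in\{*\}}A$ of the one-element family, so $A\in\mathcal D^{\coprod}$. Consequently the hypothesis ``$A_1\in\mathcal D$ and $A_2\in\mathcal D^{\coprod}$'' appearing in conditions (2) and (3) of Theorem~\ref{DcompChar} reduces to ``$A_1$ and $A_2$ are arbitrary objects'', which is exactly the hypothesis in conditions (2) and (3) of the corollary; in particular the apparent asymmetry between the two summands in the theorem disappears. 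Hence conditions (2) and (3) of the corollary coincide with conditions (2) and (3) of Theorem~\ref{DcompChar} for $\mathcal D=O_{\mathcal C}$, and the asserted equivalence (1)$\Leftrightarrow$(2)$\Leftrightarrow$(3) follows at once.

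There is essentially no obstacle here: the argument is a one-line specialization of Theorem~\ref{DcompChar}, and the only (very mild) point requiring a remark is the identity $O_{\mathcal C}^{\coprod}=O_{\mathcal C}$, i.e.\ that a coproduct of a single object is isomorphic to that object, which guarantees that restricting the codomains in conditions (2) and (3) to $\mathcal D$ and $\mathcal D^{\coprod}$ costs nothing once $\mathcal D$ is all of $O_{\mathcal C}$.
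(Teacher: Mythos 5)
Your proposal is correct and is exactly what the paper intends: the corollary is stated as a specialization of Theorem~\ref{DcompChar} to $\mathcal D=O_{\mathcal C}$, and your observation that $O_{\mathcal C}^{\coprod}=O_{\mathcal C}$ up to isomorphism (so the asymmetric hypotheses on $A_1$ and $A_2$ collapse) is the only point that needs checking.
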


The following description of autoconnectedness presents another consequence of Theorem~\ref{DcompChar}:  

\begin{corollary}\label{autoCompChar} The following conditions are equivalent for an object $C$:
\begin{enumerate}
\item[(1)] $C$ is autoconnected,
\item[(2)] for each morphism $f\in\Mor\left(C,\coprod_{i\in I} C_i\right)$, 
where $C_i\cong C$ for all $i\in I$, there exists an index $i$ 
such that $U(f)(U(C))\subseteq U(\nu_i)(U(C_i))$,
\item[(3)] for each morphism $f\in\Mor\left(C,\coprod_{i\in I} C_i\right)$, 
where $C_i\cong C$ for all $i\in I$, there exists an index $i$ 
such that $U(f)(U(C))\cap U(\nu_j)(U(C_j))=U(\theta_{\coprod  C_i})$ for each $j\ne i$,
\end{enumerate}
where $\nu_i$ are the  structural morphism of a coproduct $\coprod_{i\in I} C_i$.
\end{corollary}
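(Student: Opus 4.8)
The plan is to obtain the equivalence $(1)\Leftrightarrow(2)$ as the special case of Theorem~\ref{DcompChar} in which $\mathcal D$ is the class of all objects isomorphic to $C$, and then to check the purely set-theoretic equivalence $(2)\Leftrightarrow(3)$ directly. For the first part, note that with this choice of $\mathcal D$ a family of objects of $\mathcal D$ is exactly a family $(C_i,i\in I)$ with $C_i\cong C$; since replacing the members of a family by isomorphic objects alters neither $\Psi_\mathcal A^C$ nor $\coprod C_i$ up to the canonical identifications, $\{C\}$-connectedness coincides with $\mathcal D$-connectedness. As $\mathcal D^{\coprod}$ then consists precisely of the coproducts of copies of $C$, condition (5) of Theorem~\ref{DcompChar} for this $\mathcal D$ is verbatim condition (2) here, so $(1)\Leftrightarrow(2)$ is nothing but the equivalence $(1)\Leftrightarrow(5)$ of Theorem~\ref{DcompChar}.

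It remains to prove $(2)\Leftrightarrow(3)$. First I would fix $f\in\Mor(C,\coprod_{i\in I}C_i)$ with each $C_i\cong C$, put $A=\coprod_{i\in I}C_i$, and record three elementary facts: (a) $U(A)=\bigcup_{i\in I}U(\nu_i)(U(C_i))$ and $U(\nu_i)(U(C_i))\cap U(\nu_j)(U(C_j))\subseteq U(\theta_A)$ for $i\ne j$, by Proposition~\ref{coprod=uni}, Corollary~\ref{decomp1} and Proposition~\ref{decomp3}; (b) $U(\theta_A)\subseteq U(\nu_j)(U(C_j))$ for every $j\in I$, since $\nu_j\vartheta_{C_j}$ is by (UD1) a monomorphism with domain isomorphic to $\theta$, so its image is $U(\theta_A)$ by Lemma~\ref{ZeroSubobj}; (c) $U(\theta_A)\subseteq U(f)(U(C))$, because $f\vartheta_C$ is likewise a monomorphism with domain isomorphic to $\theta$, hence $U(f)(U(\theta_C))=U(f\vartheta_C)(U(\theta_C))=U(\theta_A)$ by Lemma~\ref{ZeroSubobj} while $U(\theta_C)\subseteq U(C)$.

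Granting these, $(2)\Rightarrow(3)$ is immediate: if $U(f)(U(C))\subseteq U(\nu_i)(U(C_i))$ then for $j\ne i$ we get $U(f)(U(C))\cap U(\nu_j)(U(C_j))\subseteq U(\nu_i)(U(C_i))\cap U(\nu_j)(U(C_j))\subseteq U(\theta_A)$ by (a), and the reverse inclusion holds by (b) and (c), giving equality. For $(3)\Rightarrow(2)$, assume $U(f)(U(C))\cap U(\nu_j)(U(C_j))=U(\theta_A)$ for all $j\ne i$; using (a) write $U(f)(U(C))=\bigcup_{k\in I}\bigl(U(f)(U(C))\cap U(\nu_k)(U(C_k))\bigr)$, observe that the term for $k=i$ is contained in $U(\nu_i)(U(C_i))$ while each term for $k\ne i$ equals $U(\theta_A)\subseteq U(\nu_i)(U(C_i))$ by (b), and conclude $U(f)(U(C))\subseteq U(\nu_i)(U(C_i))$.

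The step I expect to require the most care is the bookkeeping with the initial subobject in (b) and (c): the set equality in condition (3) genuinely depends on $U(\theta_A)$ lying both in the image of $f$ and in each coproduct component (it would fail, for instance, in a UD-category where $\theta$ is a one-point object, were this containment overlooked), so Lemma~\ref{ZeroSubobj}---which also reconciles the various mutually isomorphic initial subobjects---must be applied attentively. The remaining manipulations are routine applications of Theorem~\ref{DcompChar} and of the description of coproducts in Proposition~\ref{coprod=uni}.
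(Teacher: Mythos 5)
Your proof is correct and follows the route the paper intends: the corollary is stated as a direct consequence of Theorem~\ref{DcompChar} (taking $\mathcal D$ to be the isomorphism class of $C$, so that $(1)\Leftrightarrow(2)$ is the equivalence $(1)\Leftrightarrow(5)$ there), and your set-theoretic verification of $(2)\Leftrightarrow(3)$ — in particular the careful use of Lemma~\ref{ZeroSubobj} to see that $U(\theta_{\coprod C_i})$ lies in $U(f)(U(C))$ and in every $U(\nu_j)(U(C_j))$ — correctly supplies the details the paper leaves implicit.
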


In order to obtain useful characterization of connected objects in a general UD-category we say that an object $B$ is an {\it image} of an object $A$ if there is a morphism 
$\pi\in\Mor(A, B)$ with $U(\pi)$ surjective.
Observe that connected objects in the category $\mathcal C$ are precisely objects whose every image is indecomposable:

\begin{proposition} \label{comp=>indecomp}
A noninitial object $C$ of the category $\mathcal C$ is connected if and only if
every image of $C$ is indecomposable. 
\end{proposition}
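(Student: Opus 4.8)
The plan is to prove both implications using Corollary~\ref{compChar} together with Theorem~\ref{decomposition}.

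\medskip

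\textbf{($\Rightarrow$).}
Suppose $C$ is connected and let $\pi\in\Mor(C,B)$ with $U(\pi)$ surjective; I want to show $B$ is indecomposable. Note first that $B$ is not initial: if it were, then $U(B)=U(\theta_B)$, and since $U(\pi)$ is surjective we would get $U(C)\subseteq U(\pi)^{-1}(U(B))$, i.e. $U(C)=\emptyset$ in the case of $S$-$\oA$, or more generally $U(C)$ maps onto $U(\theta_B)$; combining with Lemma~\ref{ZeroSubobj} and (UD1) this forces $C$ to admit a morphism to $\theta$ while $U(\pi)$ surjective onto a single ``zero'' point. Actually the cleanest route: if $B\cong\theta$ then $U(\pi)(U(C))=U(B)=U(\theta_B)$, so by Lemma~\ref{ZeroSubobj} $U(C)$ would have to... — instead I simply observe that $C$ noninitial and $U(\pi)$ surjective already give, via the factorization through images in the decomposition argument below, a contradiction, so I treat the indecomposability directly. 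Assume $B\cong B_1\sqcup B_2$ with both $B_i$ noninitial, and let $\nu_i$ be the structural morphisms. By Corollary~\ref{compChar}(3) applied to the composite $\nu\pi$... more precisely to $\pi$ viewed as a morphism $C\to B_1\sqcup B_2$, we get (say) $U(\pi)(U(C))\subseteq U(\nu_1)(U(B_1))$. But $U(\pi)$ is surjective, so $U(B)=U(\pi)(U(C))\subseteq U(\nu_1)(U(B_1))\subseteq U(B)$, forcing $U(\nu_1)(U(B_1))=U(B)$. Since $U(B_1)\cap U(B_2)=U(\theta_B)$ by Proposition~\ref{decomp3}, this gives $U(B_2)\subseteq U(\theta_B)$, so $B_2\cong\theta$ by Lemma~\ref{ZeroSubobj}, contradicting the assumption. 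Hence $B$ has no decomposition into two noninitial pieces, and (using that $B$ is noninitial) $B$ is indecomposable.

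\medskip

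\textbf{($\Leftarrow$).}
Suppose every image of $C$ is indecomposable; I verify Corollary~\ref{compChar}(3). Let $A_1,A_2$ be objects, $f\in\Mor(C,A_1\sqcup A_2)$ with structural morphisms $\nu_1,\nu_2$. By (UD2) there is a subobject $(C^f,\iota)$ of $A_1\sqcup A_2$ with $U(C^f)=U(f)(U(C))$, and by (UD3) a morphism $\tilde f\in\Mor(C,C^f)$ with $f=\iota\tilde f$; since $U(\tilde f)$ is surjective onto $U(C^f)$ by construction, $C^f$ is an image of $C$, hence indecomposable by hypothesis. Now apply Lemma~\ref{subobjects-decomp} to the decomposition $((A_1^{\nu_1},\iota_1),(A_2^{\nu_2},\iota_2))$ of $A_1\sqcup A_2$ (Corollary~\ref{decomp1}) and the subobject $C^f$: we obtain a decomposition $((A_1^{\nu_1}\cap C^f,\mu_1),(A_2^{\nu_2}\cap C^f,\mu_2))$ of $C^f$ with $U(A_i^{\nu_i}\cap C^f)=U(C^f)\cap U(A_i^{\nu_i})$. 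Since $C^f$ is indecomposable, one of these two summands — say the one for $i$ — must satisfy $U(A_i^{\nu_i}\cap C^f)=U(C^f)$ (the other being $\cong\theta$), which by Proposition~\ref{decomp3} means exactly $U(C^f)\subseteq U(\nu_i)(U(A_i))$, i.e. $U(f)(U(C))\subseteq U(\nu_i)(U(A_i))$. That is Corollary~\ref{compChar}(3), so $C$ is connected. (The case where $C^f\cong\theta$, i.e. $C$ itself admits a morphism to $\theta$, is handled by the trivial inclusion $U(f)(U(C))=U(\theta)\subseteq U(\nu_1)(U(A_1))$; and I should note at the start that $C$ noninitial does not preclude $U(C^f)=U(\theta_{A_1\sqcup A_2})$ in the pointed case, but then the conclusion holds vacuously.)

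\medskip

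The main obstacle I anticipate is the careful bookkeeping around initial objects in the pointed-acts setting: images can be initial-ish (map onto the zero point), subobjects $C^f$ can equal $\theta_{A_1\sqcup A_2}$ even for noninitial $C$, and one must consistently use Lemma~\ref{ZeroSubobj} and (UD1) to see that all such degenerate cases make the desired inclusion hold trivially rather than break the argument. The ``hard'' mathematical content — that an image splitting forces a splitting of the target, and conversely that indecomposability of all images is exactly the obstruction to $f$ landing in one summand — is essentially immediate from Lemma~\ref{subobjects-decomp}, Proposition~\ref{decomp3}, and (UD2)--(UD3); there is no deep step, only the requirement to route everything through the forgetful functor $U$ since, as the paper repeatedly stresses, the decomposition criteria are stated set-theoretically.
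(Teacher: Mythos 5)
Your proof is correct and follows essentially the same route as the paper's: the forward direction is the paper's argument via Corollary~\ref{compChar}(3) and Proposition~\ref{decomp3} (you write it directly where the paper writes the contrapositive), and the backward direction factors $f$ through $C^f$ by (UD2)--(UD3) and splits $C^f$ with Lemma~\ref{subobjects-decomp}, exactly as in the paper. The one loose end — your unproved assertion that the image $B$ is noninitial, which can genuinely fail in $S$-$\A_0$ since the zero map makes $\theta$ an image of every object — is an edge case the paper's own proof silently ignores as well, so it does not separate your argument from theirs.
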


\begin{proof}
$(\Rightarrow)$ Let $\pi:C\to \overline{C}$ be a morphism such that  $U(\pi)$ is surjective and $((A_1,\iota_1), (A_2, ,\iota_2))$ is a decomposition  of $\overline{C}$ with $A_1\not\cong\theta\not\cong A_2$. Then $U(\pi)(U(C))=U(\overline{C})\nsubseteq U(\iota_i)(U(A_i))$ for both $i=1,2$ by Proposition~\ref{decomp3}, hence 
$C$ is not connected by Corollary~\ref{compChar}.

$(\Leftarrow)$ 
If $C$ is not connected, then there exists a pair of objects 
$A_1$ and $A_2$ and a morphism $\pi\in\Mor\left(C, A_1\sqcup A_2\right)$ such that 
$U(C^\pi)=U(\pi)(U(C))\nsubseteq U(\nu_i)(U(A_i))$ for $i\in\{1,2\}$ by Corollary~\ref{compChar}, where $\nu_1$, $\nu_2$ are the structural coproduct morphisms. 
Then there exists a morphism $\tilde{\pi}\in\Mor(C,C^\pi)$ with $U(\tilde{\pi})$ surjective and $\iota$ an inclusion morphism 
satisfying $\pi=\iota\tilde{\pi}$ by (UD2) and (UD3). Furthermore, $(U(\nu_1)(U(A_1),U(\nu_2)(U(A_2))$ induces a nontrivial 
decomposition of $C^\pi$ by Lemma~\ref{subobjects-decomp}. We have proved that the image $C^\pi$ of $C$ is not indecomposable.
\end{proof}

Let us mention, as an easy consequence of the last claim, that every connected object in $\mathcal C$ is indecomposable.

The connectedness (smallness) property originally studied in the branch of (left $R$-)modules has been defined in a similar fashion and the notion of self-smallness as a generalization of the property of being finitely generated can be transferred via the notion of an autoconnected object to $UD$-categories and specially to those of $S$-acts. (see e.g. \cite{AM}, \cite{Dv}).

Note that a stronger version of the assertion of Proposition~ \ref{comp=>indecomp} holds true for extensive categories: an object of an extensive category is connected if and only if it is indecomposable by \cite[Theorem 3.3]{EC}.

Using a similar argument as in the direct implication of Proposition~\ref{comp=>indecomp} we get a necessary condition of autoconnected objects:

\begin{lemma}\label{autoIndec}
Any noninitial autoconnected object is indecomposable.
\end{lemma}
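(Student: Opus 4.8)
The plan is to mimic the ``only if'' direction of Proposition~\ref{comp=>indecomp}, specializing the test family to copies of $C$ itself. Suppose $C$ is noninitial and autoconnected, and assume for contradiction that $C$ is decomposable, i.e.\ there is a decomposition $((A_1,\iota_1),(A_2,\iota_2))$ of $C$ with $A_1\not\cong\theta\not\cong A_2$. By Proposition~\ref{decomp3} this means $U(C)=U(A_1)\cup U(A_2)$, $U(A_1)\cap U(A_2)=U(\theta_C)$, and by Lemma~\ref{ZeroSubobj} both $U(A_1)$ and $U(A_2)$ strictly contain $U(\theta_C)$. The key point is that $C\cong A_1\sqcup A_2$ and that $A_1,A_2$ are retracts of $C$: the canonical projections $p_i\colon C\to A_i$ (obtained, e.g., from the decomposition by collapsing the other summand onto $\theta_{A_i}$, or more simply by noting $A_i$ is a retract of $A_1\sqcup A_2$) satisfy $p_i\iota_i=\id_{A_i}$.

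First I would fix an index set $I$ of size two (or larger) with objects $C_i\cong C$ and structural morphisms $\nu_i\colon C_i\to\coprod_{i}C_i$, and build a morphism $f\colon C\to C_1\sqcup C_2$ whose image meets both summands nontrivially. Concretely, using $C\cong A_1\sqcup A_2$, fix coretractions $g_i\colon A_i\to C_i$ (composing an isomorphism $A_i\cong A_i'$ inside $C_i$ with an inclusion, or just using that $A_i$ embeds as a subobject of $C_i\cong C$), and let $f$ be the unique morphism with $f\iota_i=\nu_i g_i$ for $i=1,2$, given by the universal property of the coproduct $C\cong A_1\sqcup A_2$. Then $U(f)(U(C))=U(f)(U(A_1))\cup U(f)(U(A_2))\supseteq U(\nu_1)(U(g_1)(U(A_1)))\cup U(\nu_2)(U(g_2)(U(A_2)))$, and since $U(g_i)$ is injective and $U(A_i)\supsetneq U(\theta_C)=U(\theta_{C_i})$, the set $U(f)(U(C))$ meets $U(\nu_i)(U(C_i))$ in strictly more than $U(\theta_{\coprod C_i})$ for \emph{both} $i=1,2$. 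In particular $U(f)(U(C))\nsubseteq U(\nu_i)(U(C_i))$ for either $i$.

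This directly contradicts autoconnectedness: by Corollary~\ref{autoCompChar}(1)$\Rightarrow$(2), since $C$ is autoconnected there must exist an index $i$ with $U(f)(U(C))\subseteq U(\nu_i)(U(C_i))$, contradicting the conclusion of the previous paragraph. Hence no such decomposition exists and $C$ is indecomposable.

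\textbf{Main obstacle.} The only delicate point is the construction of $f$ --- specifically, producing the component morphisms $A_i\to C_i$ whose underlying maps are injective and whose images genuinely exceed $U(\theta)$. This is where one uses that $A_i$ is (up to isomorphism) a subobject of $C\cong C_i$ together with the axiom (UD5)/(UD1) machinery to guarantee the structural morphisms $\nu_i$ are monomorphisms, so that distinctness of the two images is detected at the level of $U$. Everything else is a routine application of Proposition~\ref{decomp3}, Lemma~\ref{ZeroSubobj}, and Corollary~\ref{autoCompChar}; indeed, the argument is deliberately a lightweight echo of the ``$\Rightarrow$'' half of Proposition~\ref{comp=>indecomp}, with the general codomain $\overline C$ replaced by a coproduct of copies of $C$.
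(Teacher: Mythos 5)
Your proof is correct and follows essentially the same route as the paper: both construct the canonical morphism $C\cong A_1\sqcup A_2\to C\sqcup C$ sending each summand into a distinct copy via the universal property, and then invoke Corollary~\ref{autoCompChar} together with Proposition~\ref{decomp3} and Lemma~\ref{ZeroSubobj} to force one summand to be initial. The only cosmetic differences are that the paper deduces $B_2^{\iota_2}\cong\theta$ directly from the containment in one copy rather than arguing by contradiction on the image, and your mention of retractions/projections $p_i$ is never actually needed.
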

\begin{proof} Assume that the autoconnected object $C$ decomposes into $\mathcal{B}=((B_1,\iota_1), (B_2,\iota_2))$.
Let $\nu\in \Mor\left(C,C\sqcup C\right)$ be the morphism satisfying $\nu\iota_i=\nu_i\iota_i$ for $i=1,2$ which exists by the universal property of a coproduct $B_1\sqcup B_2$, where $\nu_1,\nu_2$ are the  structural morphisms of the coproduct $C\sqcup C$. Then there exists $i$ such that $U(\nu)U(C)\subseteq U(\nu_i)U(C)$ by Corollary~\ref{autoCompChar}, w.l.o.g. we may suppose $i=1$.
Then
\[
U(\nu_2)U(B_2^{\iota_2})=U(\nu_2\iota_2)U(B_2)\subseteq 
U(\nu_1)U(C)\cap U(\nu_2)U(C) = U(\theta_{C\sqcup C}),
\]
which implies that $B_2^{\iota_2}\cong \theta$ by Lemma~\ref{ZeroSubobj}.
Thus the decomposition $\mathcal{B}$ is trivial, so $C$ is indecomposable.
\end{proof}

\begin{proposition}\label{endoImage}
For an autoconnected object $C \in \mathcal{C}$ and an endomorphism
 $f\in \Mor\left(C,C\right)$, the object $C^f$ is autoconnected, too.
\end{proposition}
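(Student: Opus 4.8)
The plan is to verify for $C^f$ the criterion of Corollary~\ref{autoCompChar}(2). By (UD2) and (UD3), fix an inclusion morphism $\iota\in\Mor(C^f,C)$ and a morphism $\tilde f\in\Mor(C,C^f)$ with $f=\iota\tilde f$ and $U(\tilde f)$ surjective onto $U(C^f)=U(f)(U(C))$; in particular $\tilde f$ is an epimorphism by Lemma~\ref{mono-epi}(2). Note also that, since $C$ is autoconnected, the empty-family test forces $\Mor(C,\theta)=\emptyset$, and hence (precomposing with $\tilde f$) $\Mor(C^f,\theta)=\emptyset$, so the index sets considered below may be taken nonempty.

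Now let $(D_i,i\in I)$ be a family with isomorphisms $d_i\colon D_i\to C^f$, let $(\coprod_{i\in I}D_i,(\nu_i)_{i\in I})$ be a coproduct of it, and let $g\in\Mor(C^f,\coprod_{i\in I}D_i)$; we must produce $i_0$ with $U(g)(U(C^f))\subseteq U(\nu_{i_0})(U(D_{i_0}))$. The key idea is that autoconnectedness of $C^f$ cannot be read off directly from that of $C$, because the $D_i$ are copies of $C^f$ rather than of $C$; the remedy is to ``inflate'' the test diagram along $\iota$. Concretely, I would take a coproduct $(\coprod_{i\in I}C,(\mu_i)_{i\in I})$ of $|I|$ copies of $C$ and, by the universal property of $\coprod_{i\in I}D_i$, the unique morphism $\Psi\colon\coprod_{i\in I}D_i\to\coprod_{i\in I}C$ with $\Psi\nu_i=\mu_i\iota d_i$ for every $i$. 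Applying autoconnectedness of $C$ (via Corollary~\ref{autoCompChar}(2)) to $\Psi g\tilde f\in\Mor(C,\coprod_{i\in I}C)$ yields an index $i_0$ with $U(\Psi g\tilde f)(U(C))\subseteq U(\mu_{i_0})(U(C))$; since $U(\tilde f)$ is surjective, this reads $U(\Psi)(U(g)(U(C^f)))\subseteq U(\mu_{i_0})(U(C))$.

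It remains to descend this inclusion from $\Psi g$ to $g$. I would argue elementwise: take $x\in U(g)(U(C^f))$, which lies in $\bigcup_{i\in I}U(\nu_i)(U(D_i))$ by Proposition~\ref{coprod=uni}. If $x\in U(\theta_{\coprod_i D_i})$ then $x\in U(\nu_{i_0})(U(D_{i_0}))$ automatically, so assume $x\notin U(\theta_{\coprod_i D_i})$; then Proposition~\ref{decomp3} places $x$ in exactly one $U(\nu_j)(U(D_j))$, say $x=U(\nu_j)(y)$, and the claim is $j=i_0$. Otherwise $U(\Psi)(x)=U(\mu_j\iota d_j)(y)$ lies both in $U(\mu_j)(U(C))$ and in $U(\mu_{i_0})(U(C))$, hence in their intersection $U(\theta_{\coprod_i C})$ by Proposition~\ref{decomp3}; as $\mu_j,\iota,d_j$ are monomorphisms their underlying maps are injective (Lemma~\ref{subobject}) and carry zero subobjects onto zero subobjects (Lemma~\ref{ZeroSubobj}), so cancelling $\mu_j\iota d_j$ forces $y\in U(\theta_{D_j})$ and therefore $x=U(\nu_j)(y)\in U(\theta_{\coprod_i D_i})$, a contradiction. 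Hence $x\in U(\nu_{i_0})(U(D_{i_0}))$ in all cases, and Corollary~\ref{autoCompChar} gives that $C^f$ is autoconnected.

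The only delicate point is this last descent: one must track carefully the underlying maps of $\mu_j,\iota,d_j$ and their interaction with the zero subobjects $\theta_{(-)}$ — specifically, that for a monomorphism $m\colon X\to Y$ one has $U(m)$ injective and $U(m)(U(\theta_X))=U(\theta_Y)$, so that $U(m)^{-1}(U(\theta_Y))=U(\theta_X)$ — while everything else is routine use of Proposition~\ref{coprod=uni} and Proposition~\ref{decomp3}.
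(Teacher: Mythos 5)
Your proof is correct and follows essentially the same route as the paper: both arguments inflate the test family of copies of $C^f$ into copies of $C$ along the inclusion $\iota$, precompose with the surjective $\tilde f$ coming from the factorization $f=\iota\tilde f$, apply autoconnectedness of $C$ to the composite, and transfer the conclusion back to $\coprod_i D_i$ (the paper argues contrapositively via Corollary~\ref{autoCompChar}(3) and an injectivity claim for the induced map between the two coproducts, while you argue directly via Corollary~\ref{autoCompChar}(2) with an elementwise descent that does the same job). One harmless slip: the ``empty-family test'' would force $\Mor(C,\theta)=\emptyset$ and hence would rule out every object of $S$-$\A_0$ (contradicting the paper's own examples of autoconnected pointed acts), so the definition must be read over nonempty families only --- but then the case $I=\emptyset$ never arises and that remark is simply unnecessary.
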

\begin{proof}

Suppose that $C^f$ is not autoconnected. Then by Corollary~\ref{autoCompChar}
there is a morphism $g\in\Mor(C^f, \coprod_{i\in I} D_i)$ such that
$D_i \cong C^f$ and for each $i\in I$ there exists $j\ne i$ such that 
\[
U(g)(U(C^f)) \cap U(\nu_{j_i})(U(D_{j_i})) \neq U(\theta_{\coprod_{i} D_i}),
\]
where $\nu_i$, $i\in I$ denotes a coproduct structural morphism.
Let $(\coprod_{i\in I} C_i, (\mu_i)_{i\in I})$ be the coproduct of objects $C_i\cong C$. 
Then by (UD2) and (UD3) there exist morphisms $\tilde{f}\in \Mor\left(C,C^f\right)$ and $\iota\in \Mor\left(C^f,C\right)$ such 
that $\iota$ is an inclusion morphism and $f=\iota\tilde{f}$, hence we may suppose that there exist inclusion morphisms $\iota_i\in\Mor(D_i,C_i)$ for all $i\in I$.
Then by the universal property of a coproduct there exists $\nu\in\Mor(\coprod_{i\in I} D_i,\coprod_{i\in I} C_i )$ such that the diagram
\[	
	\begin{CD}
	\coprod_{i} D_i     @>\nu >>  \coprod_{i} C_i \\
	@A\nu_j AA     @A\mu_j AA       \\
	D_j   @>\iota_j >> C_j 
	\end{CD}
\]
commutes for all $j\in I$. Note that $U(\nu)$ is injective because 
\textbf{!!!!!!!!} $U(\nu)U(\nu_i)(D_i)\subseteq U(\nu)U(\nu_i)(D_i)$ \textbf{!!!!!!!!},
$U(\nu)U(\theta_{\coprod_{i} D_i})=U(\theta_{\coprod_{i} C_i})$
and $U(\nu)U(\nu_i)=U(\mu_i)U(\iota_i)$ is injective for each $j$.
Thus 
\[
U(\theta_{\coprod_{i} C_i})=U(\nu)U(\theta_{\coprod_{i} D_i})\ne  U(\nu)(U(g)(U(C^f)) \cap U(\nu_{j_i})(U(D_{j_i})))
\subseteq 
\]
\[
\subseteq U(\nu g\tilde{f})(U(C)) \cap U(\mu_{j_i}\iota_{j_i})(U(D_{j_i})))
\subseteq U(\nu g\tilde{f})(U(C)) \cap U(\mu_{j_i})(U(C_{j_i})))
\]
for each $i\in I$, which implies that $C$ is not autoconnected by Corollary~\ref{autoCompChar}.
\end{proof}


\section{Categories of $S$-acts}

Let $\mathcal S=(S,\cdot, 1)$ be a monoid (possibly with zero $0$) through the whole section.
Recall that for  $\mathcal S $ both categories $S$-$\A_0$ and $S$-$\oA$ of $S$-acts are UD-categories by Example \ref{act-UD}. 
We will use basic properties of these categories 
summarized in the axiomatics (UD1)--(UD6) freely in the sequel.
For standard terminology concerning the theory of acts we refer to the monograph \cite{KKM}.

\subsection{Connected acts}

The following consequence of Corollary~\ref{compChar} shows that the reverse implication of \cite[Lemma I.5.36]{KKM} holds true.

\begin{lemma}\label{char.oA} Connected objects in the category $S$-$\oA$ are precisely indecomposable objects.
\end{lemma}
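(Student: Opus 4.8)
The statement is that in the category $S$-$\oA$ the connected objects are exactly the indecomposable ones. One inclusion is immediate from the general theory already developed: by Proposition~\ref{comp=>indecomp} every (noninitial) connected object is indecomposable, and the initial object $\emptyset$ is connected only vacuously if at all — in fact $\Mor(\emptyset,-)$ is a singleton, so $\Psi^{\emptyset}_{\mathcal A}$ is a map between singletons and is trivially surjective, hence $\emptyset$ is connected, and $\emptyset$ is not indecomposable by definition; so I must be a little careful and state the lemma for \emph{noninitial} objects, matching the phrasing of Proposition~\ref{comp=>indecomp}. Granting that, the real content is the converse: every indecomposable $S$-act is connected.

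\textbf{The key step.} Let $C$ be an indecomposable object of $S$-$\oA$; I want to show $C$ is connected. By Corollary~\ref{compChar} it suffices to show that for every pair of objects $A_1,A_2$ and every morphism $f\in\Mor(C, A_1\sqcup A_2)$ we have $U(f)(U(C))\subseteq U(\nu_1)(U(A_1))$ or $U(f)(U(C))\subseteq U(\nu_2)(U(A_2))$. The crucial point specific to $S$-$\oA$ (as opposed to a general UD-category) is that in $S$-$\oA$ coproducts are genuine disjoint unions: $U(A_1\sqcup A_2)=U(\nu_1)(U(A_1))\,\dot\cup\,U(\nu_2)(U(A_2))$, and moreover $\mathcal S$-$\oA$ has no morphisms into the empty act, i.e.\ $\Mor(C,\theta)=\emptyset$ whenever $U(C)\neq\emptyset$. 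So consider $f:C\to A_1\sqcup A_2$ and set $C_i^{f}=$ the preimage $U(f)^{-1}(U(\nu_i)(U(A_i)))\subseteq U(C)$ for $i=1,2$. Because $\nu_1,\nu_2$ are the structural morphisms of a coproduct in $S$-$\oA$, these two subsets are disjoint and their union is $U(C)$. Each of them is an $S$-subact of $C$: if $x\in C_i^{f}$ then $U(f)(sx)=sU(f)(x)\in U(\nu_i)(U(A_i))$ since $U(\nu_i)(U(A_i))$ is a subact of $A_1\sqcup A_2$. Hence $(C_1^{f},C_2^{f})$ is a pair of disjoint subacts of $C$ covering $C$, which is exactly a coproduct decomposition $C\cong C_1^{f}\sqcup C_2^{f}$ in $S$-$\oA$ (this is the explicit form of Proposition~\ref{decomp3} in this concrete category). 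Since $C$ is indecomposable, one of the two summands must be initial, i.e.\ empty; say $C_2^{f}=\emptyset$. Then $U(f)(U(C))\subseteq U(\nu_1)(U(A_1))$, which is what Corollary~\ref{compChar}(3) demands. Therefore $C$ is connected.

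\textbf{Where the difficulty lies.} There is essentially no hard step: the argument is a direct unwinding of the disjoint-union description of coproducts in $S$-$\oA$ together with the characterisation of connectedness in Corollary~\ref{compChar} and of decompositions in Proposition~\ref{decomp3}. The only subtlety is the bookkeeping at the initial object — one should restrict attention to noninitial $C$ (so that $\Mor(C,\theta)=\emptyset$ and Proposition~\ref{comp=>indecomp} applies cleanly), and observe that "empty summand" literally means "initial object" here. An alternative, even shorter route is to invoke Proposition~\ref{comp=>indecomp} for the forward direction and, for the converse, note that every image of an indecomposable $S$-act is again indecomposable: if $\pi:C\to\bar C$ has $U(\pi)$ surjective and $\bar C$ decomposed nontrivially as $\bar C_1\sqcup\bar C_2$, pulling the two subacts back along $U(\pi)$ yields a nontrivial decomposition of $C$ (the preimages are disjoint nonempty subacts covering $U(C)$), contradicting indecomposability of $C$; then apply Proposition~\ref{comp=>indecomp} again in the other direction. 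Either way the proof is a couple of lines once the general machinery is in place.
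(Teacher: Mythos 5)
Your proof is correct and is essentially the paper's argument made self-contained: the paper simply cites \cite[Lemma I.5.36]{KKM} together with Corollary~\ref{compChar}, and your disjoint-preimage argument (the two preimages $U(f)^{-1}(U(\nu_i)(U(A_i)))$ are disjoint subacts covering $C$, so indecomposability forces one to be empty) is exactly the content of that cited lemma. The remark about restricting to noninitial objects is reasonable bookkeeping but does not change the substance.
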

\begin{proof} The assertion follows from \cite[Lemma I.5.36]{KKM} and Corollary~\ref{compChar}.
\end{proof}

Since the category $S$-$\oA$ is infinitary extensive by Proposition~\ref{S-oA_ext}, the previous lemma also follows from \cite[Theorem 3.3]{EC}. 

Recall that a left $S$-act $A$ is called \emph{cyclic} if there exists $a\in A$ for which $Sa=\{sa\mid s\in S\}=A$, and $A$ is called \emph{locally cyclic}
 if for any pair $a,b\in A$ there exists $c\in A$ such that $a,b \in Sc$. Since cyclic acts are locally cyclic and locally cyclic acts are indecomposable, we obtain an immediate
consequence of Lemma~\ref{char.oA}:

\begin{corollary}\label{loc.cyclic} Every locally cyclic left act is connected in the category $S$-$\oA$.
\end{corollary}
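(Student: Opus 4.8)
The plan is to reduce connectedness to indecomposability using Lemma~\ref{char.oA}, and then to verify indecomposability directly from the definition of local cyclicity. By Lemma~\ref{char.oA}, an object of $S$-$\oA$ is connected if and only if it is indecomposable, so it suffices to show that every locally cyclic act $A$ is indecomposable. Recall that acts are nonempty by the convention adopted in Section~2, so $A$ is noninitial; it therefore remains only to rule out a decomposition of $A$ into two noninitial summands.

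Suppose for contradiction that $A\cong B_1\sqcup B_2$ with both $B_1$ and $B_2$ noninitial, i.e. nonempty. Since coproducts in $S$-$\oA$ are disjoint unions by the description recalled in Section~2, we may identify $B_1,B_2$ with disjoint subacts of $A$ satisfying $A=B_1\cup B_2$ and $B_1\cap B_2=\emptyset$. As both summands are nonempty, choose $a\in B_1$ and $b\in B_2$. By local cyclicity there is an element $c\in A$ with $a,b\in Sc$. The element $c$ lies in exactly one summand, say $c\in B_k$; since $B_k$ is a subact it is closed under the action of $S$, whence $Sc\subseteq B_k$. Then both $a$ and $b$ lie in $B_k$, contradicting $a\in B_1$, $b\in B_2$ and $B_1\cap B_2=\emptyset$. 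Thus $A$ admits no decomposition into two noninitial summands and is indecomposable, hence connected.

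The argument is short, and the only point that requires care is the identification of the coproduct $B_1\sqcup B_2$ with a disjoint union of two subacts that are closed under the $S$-action: this is precisely the description of coproducts in $S$-$\oA$, and it is exactly what forces the whole orbit $Sc$ to remain inside a single summand. I do not expect a genuine obstacle beyond this observation; all the substantive work has already been carried out in establishing Lemma~\ref{char.oA}.
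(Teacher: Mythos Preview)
Your proof is correct and follows exactly the approach indicated in the paper: reduce connectedness to indecomposability via Lemma~\ref{char.oA}, and use that locally cyclic acts are indecomposable. The paper states the implication ``locally cyclic $\Rightarrow$ indecomposable'' without argument, whereas you supply the standard verification; nothing further is needed.
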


Furthermore, we prove a sufficient condition of connectedness for both considered categories of acts.

\begin{proposition}\label{cyclic} Every cyclic left act is connected in both categories $S$-$\oA$ and $S$-$\A_0$.
\end{proposition}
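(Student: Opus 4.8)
The plan is to use the characterization of connectedness from Corollary~\ref{compChar}, specifically the equivalence $(1)\Leftrightarrow(3)$: it suffices to show that for a cyclic act $C=Sc$ (in $S$-$\oA$) or $C=Sc$ with the zero element adjoined appropriately (in $S$-$\A_0$), any morphism $f\in\Mor(C,A_1\sqcup A_2)$ satisfies either $U(f)(U(C))\subseteq U(\nu_1)(U(A_1))$ or $U(f)(U(C))\subseteq U(\nu_2)(U(A_2))$, where $\nu_1,\nu_2$ are the structural coproduct morphisms. The key point is that the image of the generator $c$ decides everything: since $f$ is an $S$-homomorphism, $U(f)(U(C))=U(f)(Sc)=S\cdot f(c)$, so the whole image is contained in the cyclic subact generated by the single element $f(c)$.

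First I would treat the category $S$-$\oA$. Here a coproduct $A_1\sqcup A_2$ is the disjoint union $A_1\,\dot\cup\,A_2$, and crucially every element of $A_1\sqcup A_2$ lies in exactly one of the two summands, and each summand is a subact (closed under the $S$-action). So $f(c)$ lies in, say, $U(\nu_1)(U(A_1))$; then $S\cdot f(c)\subseteq U(\nu_1)(U(A_1))$ because $A_1^{\nu_1}$ is a subact, giving $U(f)(U(C))\subseteq U(\nu_1)(U(A_1))$ as required. This is essentially immediate and uses only that the generator's image picks a summand.

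The case of $S$-$\A_0$ requires slightly more care because the coproduct glues the zero elements together, so the two summands $A_1^{\nu_1}$ and $A_2^{\nu_2}$ are not disjoint: they intersect in $U(\theta_{A_1\sqcup A_2})$, the common zero. If $f(c)$ is the zero element, then $U(f)(U(C))=S\cdot f(c)=\{0\}=U(\theta_{A_1\sqcup A_2})\subseteq U(\nu_i)(U(A_i))$ for both $i$, and we are done. If $f(c)$ is not the zero element, then by the description of the coproduct in (ii) of the excerpt, $f(c)$ lies in exactly one summand, say $U(\nu_1)(U(A_1))\setminus U(\theta)$; since $A_1^{\nu_1}$ is a pointed subact it is closed under the $S$-action, so $S\cdot f(c)\subseteq U(\nu_1)(U(A_1))$, hence $U(f)(U(C))\subseteq U(\nu_1)(U(A_1))$. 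In either subcase condition $(3)$ of Corollary~\ref{compChar} holds.

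There is no real obstacle here; the only thing to be mildly careful about is the bookkeeping in the pointed case — that a non-zero element of the coproduct lies in a \emph{unique} summand, and that the image of a cyclic act is again cyclic (generated by the image of the generator) so it cannot straddle the two summands except through the shared zero. One could alternatively invoke Lemma~\ref{char.oA} for the $S$-$\oA$ part (cyclic acts are indecomposable, hence connected there), but the direct argument via Corollary~\ref{compChar} has the advantage of handling both categories uniformly, which is the spirit of the paper, so I would present it that way.
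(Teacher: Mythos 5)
Your proposal is correct. The generator-chasing argument is sound: for $C=Sc$ the image $U(f)(U(C))=S\cdot f(c)$ is the cyclic subact generated by $f(c)$, each $U(\nu_i)(U(A_i))$ is a subact of the coproduct, and (in the pointed case) a non-zero element of $A_1\sqcup A_2$ lies in exactly one summand, so condition (3) of Corollary~\ref{compChar} holds. This does differ from the paper's route: the paper disposes of $S$-$\oA$ by citing Corollary~\ref{loc.cyclic} (locally cyclic $\Rightarrow$ indecomposable $\Rightarrow$ connected, via Lemma~\ref{char.oA}), and handles $S$-$\A_0$ by observing that every factor of a cyclic act is cyclic, hence indecomposable, and then invoking the criterion of Proposition~\ref{comp=>indecomp} that an object is connected iff all of its images are indecomposable. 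Both arguments ultimately rest on the same fact --- the image of a cyclic act is cyclic --- but yours is more elementary and handles the two categories uniformly, while the paper's is shorter because it leans on the general machinery already developed; moreover, the paper's detour through Corollary~\ref{loc.cyclic} yields the strictly stronger statement that \emph{locally} cyclic acts are connected in $S$-$\oA$, which your single-generator argument does not immediately give.
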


\begin{proof}
By Corollary~\ref{loc.cyclic} we only need to prove the  claim for the category $S$-$\A_0$.
Since any factor of a cyclic act is cyclic, and so indecomposable, the Proposition~\ref{comp=>indecomp} gives the result in the category $S$-$\A_0$.
\end{proof}

The corresponding variant of Lemma~\ref{char.oA} as a criterion of connectedness in the category $S$-$\A_0$ shall deal with all factors of an act, namely, connected objects in the category $S$-$\A_0$ are precisely objects whose every  image is indecomposable by Proposition~\ref{comp=>indecomp}.

The following example shows that in the case of the category $S$-$\A_0$ the implication in Proposition~\ref{cyclic} cannot be inverted  in general:

\begin{example}\label{exmplZ} \rm Let $\mathcal Z = (\mathbb Z,\cdot, 1)$ be a monoid with zero.
 
(1) Consider again $\mathcal Z$-act $A=2\mathbb Z\cup 3\mathbb Z$ from Example~\ref{Z-act}. 
Then $A$ is an indecomposable act which is not connected in the category $S$-$\A_0$. Indeed, if we consider the morphism
$f_6:A\to\mathbb Z_6$ given by $f_6(a)= a\mod6$, then 
the image $f_6(A)=\{0,2,4\}\cup \{0,3\}$ decomposes, hence it is not connected by Proposition~\ref{comp=>indecomp}.

(2) Every abelian group is connected in the category $\mathcal Z-\oA$ since every $\mathcal Z$-subact contains $0$. More generally, for a monoid $S$ with zero, any $A \in S$-$\A_0$ can be recognized as an object of $S$-$\oA$ and it becomes indecomposable in $\mathcal Z-\oA$ , hence connected by Lemma~\ref{char.oA}.
\end{example}

In compliance with \cite[Definition 4.20]{KKM} recall that for a subact $B$ of an act $A$ the \emph{Rees congruence} $\rho_B$ on $A$ is defined by setting $a_1 \rho a_2$ if $a_1 = a_2$ or $a_1, a_2 \in B$. The corresponding factor act $A/B$ is called \emph{Rees factor of A by B} then.

\begin{lemma}\label{noncompRees}
Let $A \in S$-$\A_0$ and $A_1$ and $A_2$ be its proper subacts. If $A = A_1 \cup A_2$ and $A_i \setminus \left( A_1 \cap A_2\right) \neq \emptyset$ for both $i=1,2$, then $A$ is not connected in $S$-$\A_0$.
\end{lemma}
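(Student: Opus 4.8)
The statement to prove is Lemma~\ref{noncompRees}: given $A\in S$-$\A_0$ with proper subacts $A_1,A_2$ satisfying $A=A_1\cup A_2$ and $A_i\setminus(A_1\cap A_2)\ne\emptyset$ for $i=1,2$, the act $A$ is not connected in $S$-$\A_0$. The natural approach is to exhibit an explicit non-indecomposable image of $A$ and invoke Proposition~\ref{comp=>indecomp}, exactly in the spirit of Example~\ref{exmplZ}(1). The candidate is the Rees factor $A/(A_1\cap A_2)$, which comes with a canonical surjective $S$-homomorphism $\pi\colon A\to A/(A_1\cap A_2)$ collapsing $B:=A_1\cap A_2$ to the zero element (if $B=\emptyset$ one simply adds a new zero, or equivalently works with $A$ itself, which already has a zero lying in both $A_i$, so in fact $B\ne\emptyset$ automatically since $0_A\in A_1\cap A_2$; this should be noted at the outset).

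\textbf{Key steps.} First I would observe that $0_A\in A_1\cap A_2$, so $B=A_1\cap A_2$ is a nonempty subact containing the zero, and form the Rees factor $\overline A=A/B$ with the quotient map $\pi$, which is surjective, hence $\overline A$ is an image of $A$ in the sense of the paper. Second, I would describe $\overline A$ set-theoretically: its elements are the single class $[B]$ (the zero of $\overline A$) together with the elements of $A\setminus B$, and its $S$-action is inherited. Third, I would set $\overline{A_i}=\pi(A_i)$ for $i=1,2$; each $\overline{A_i}$ is a subact of $\overline A$ containing the zero, their union is $\overline A$ since $A=A_1\cup A_2$, and their intersection is exactly $\{[B]\}$, the zero of $\overline A$, because $\pi$ identifies all of $B$ with $[B]$ and is injective off $B$. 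Fourth — this is where the hypothesis $A_i\setminus(A_1\cap A_2)\ne\emptyset$ enters — each $\overline{A_i}$ is a \emph{proper} subact of $\overline A$, indeed properly larger than $\{[B]\}$, because it contains the image of some element of $A_i\setminus B$. Fifth, I would conclude that $(\overline{A_1},\overline{A_2})$ gives a decomposition $\overline A=\overline{A_1}\sqcup\overline{A_2}$ in $S$-$\A_0$ into two non-initial subacts: this is precisely the coproduct description recalled in the excerpt ((ii) for $S$-$\A_0$), matching the criterion in Proposition~\ref{decomp3} with $U(\theta_{\overline A})=\{[B]\}$. Hence $\overline A$ is an image of $A$ that is not indecomposable, so by Proposition~\ref{comp=>indecomp} the object $A$ is not connected.

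\textbf{Main obstacle.} The conceptual content is light; the care needed is mostly bookkeeping: verifying that $\pi(A_1)\cap\pi(A_2)$ really reduces to the single zero class (one must check that if $\pi(a_1)=\pi(a_2)$ with $a_i\in A_i$ and the common value is not $[B]$, then $a_1=a_2\in A_1\cap A_2=B$, contradiction — so the common value is forced to be $[B]$), and that both $\overline{A_i}$ are noninitial, which is exactly the rôle of the hypothesis $A_i\setminus(A_1\cap A_2)\ne\emptyset$. A secondary point worth a sentence is confirming that the Rees congruence modulo a subact is indeed a congruence compatible with the zero, so that $\overline A$ lives in $S$-$\A_0$ and $\pi$ is a morphism there; this is standard (\cite[Definition 4.20]{KKM}) but should be cited. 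No genuinely hard step is expected.
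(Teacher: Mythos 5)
Your proposal is correct and follows essentially the same route as the paper: project onto the Rees factor $A/(A_1\cap A_2)$, observe that it decomposes as $\pi(A_1)\sqcup\pi(A_2)$ with both components noninitial thanks to the hypothesis $A_i\setminus(A_1\cap A_2)\neq\emptyset$, and conclude non-connectedness (the paper cites Corollary~\ref{compChar} where you cite Proposition~\ref{comp=>indecomp}, but these are interchangeable here). Your extra bookkeeping — noting $0_A\in A_1\cap A_2$ and checking $\pi(A_1)\cap\pi(A_2)=\{[B]\}$ — is a sound elaboration of what the paper leaves implicit.
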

\begin{proof}
Consider the projection $\pi$ of $A$ onto the Rees factor $A/\left( A_1 \cap A_2\right)$, which is decomposable into $\pi(A_1) \sqcup \pi(A_2)$. Now use Corollary~\ref{compChar}.
\end{proof}

Note that a subact $B$ of an act $A$ can be viewed as an subobject $(B,\iota)$ of $A$ with the inclusion morphism $\iota$
and recall that a subact $B$ of a left $S$-act $A$ (in  $S$-$\oA$ or $S$-$\A_0$) is called \emph{superfluous} if $B \cup C \neq A$ for any proper subact $C$ of $A$ (see \cite[Definition 2.1]{Kho-Ro}). An act is called \emph{hollow} if each of its proper subacts is superfluous (see \cite[Definition 3.1]{Kho-Ro}). Note that the situation of Lemma~\ref{noncompRees} is precisely that of non-hollow acts. 

\begin{proposition}
An $S$-act $A$ is connected in the category $S$-$\A_0$ if and only if it is hollow.
\end{proposition}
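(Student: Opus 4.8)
The plan is to prove both implications by combining the structural description of connectedness in $S$-$\A_0$ provided by Proposition~\ref{comp=>indecomp} with the behaviour of Rees factors established in Lemma~\ref{noncompRees}. For the backward direction, suppose $A$ is hollow; I want to show every image of $A$ is indecomposable, and then invoke Proposition~\ref{comp=>indecomp}. So let $\pi\colon A\to \overline A$ be a morphism with $U(\pi)$ surjective, and suppose $\overline A$ decomposes as $\overline A = B_1\sqcup B_2$ with $B_1,B_2\not\cong\theta$. Pulling back along $\pi$, put $A_i = \pi^{-1}(B_i)$ (more precisely, the subact whose underlying set is $U(\pi)^{-1}(U(B_i))$, which is a subact since $\pi$ is an $S$-homomorphism and the $B_i$ are subacts of $\overline A$, and which contains $0_A$). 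Since $U(\pi)$ is surjective and $U(B_1)\cap U(B_2)=\{0_{\overline A}\}$ while $U(B_1)\cup U(B_2)=U(\overline A)$, one checks $A = A_1\cup A_2$, $A_1\cap A_2 = U(\pi)^{-1}(\{0_{\overline A}\})$, and $A_i\setminus(A_1\cap A_2)\ne\emptyset$ because $U(B_i)\setminus\{0_{\overline A}\}\ne\emptyset$ and $U(\pi)$ is onto. If both $A_1$ and $A_2$ were proper, Lemma~\ref{noncompRees} would say $A$ is not connected, and (using Proposition~\ref{comp=>indecomp} once more, or rather directly the hypothesis) would contradict hollowness: indeed $A = A_1\cup A_2$ with $A_1$ proper forces $A_2 = A$ by superfluousness of $A_1$, whence $B_2 = \overline A$ and $B_1\cong\theta$, a contradiction. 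Hence one of $A_1,A_2$ equals $A$, say $A_1=A$, so $B_1=\overline A$ and $B_2\cong\theta$; the decomposition is trivial. Therefore every image of $A$ is indecomposable, and $A$ is connected by Proposition~\ref{comp=>indecomp}.

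For the forward direction, suppose $A$ is connected in $S$-$\A_0$ but not hollow. Non-hollowness means some proper subact $B\subseteq A$ is not superfluous, i.e.\ there is a proper subact $C\subseteq A$ with $B\cup C = A$. Set $A_1 = B$ and $A_2 = C$. Both are proper, $A = A_1\cup A_2$; it remains to verify $A_i\setminus(A_1\cap A_2)\ne\emptyset$ for both $i$. If, say, $A_1\setminus(A_1\cap A_2)=\emptyset$, then $A_1\subseteq A_2$, whence $A = A_1\cup A_2 = A_2 = C$, contradicting properness of $C$; symmetrically for $A_2$. So Lemma~\ref{noncompRees} applies and $A$ is not connected, a contradiction. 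Hence $A$ is hollow.

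\textbf{Main obstacle.} The only slightly delicate point is the backward direction's pullback step: one must be sure that the underlying-set preimage $U(\pi)^{-1}(U(B_i))$ genuinely carries a subact structure making $(A_i,\text{incl})$ a subobject of $A$ in the category-theoretic sense, and that the containments $A_i\setminus(A_1\cap A_2)\ne\emptyset$ really do hold --- this uses surjectivity of $U(\pi)$ together with $U(B_i)\ne\{0_{\overline A}\}$, which is exactly the content of $B_i\not\cong\theta$ via Lemma~\ref{ZeroSubobj}. Everything else is a direct translation between the hollowness condition (existence of a non-superfluous proper subact) and the hypothesis of Lemma~\ref{noncompRees} (a cover $A = A_1\cup A_2$ by proper subacts each strictly larger than the intersection), so no further machinery beyond Proposition~\ref{comp=>indecomp} and Lemma~\ref{noncompRees} is needed.
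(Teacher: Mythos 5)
Your proof is correct and follows essentially the same route as the paper: for ``hollow $\Rightarrow$ connected'' you pull a decomposition of an image back to a cover of $A$ by preimage subacts and use superfluousness to force one of them to be all of $A$, and for the converse you translate non-hollowness into the hypothesis of Lemma~\ref{noncompRees}. The extra details you supply (that the preimages are genuine subacts, and that $A_i\setminus(A_1\cap A_2)\ne\emptyset$) are exactly the points the paper leaves implicit, and they check out.
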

\begin{proof}
Suppose $A$ is hollow and it is not connected, i.e., there is a decomposable factor  $\pi(A) = A_1 \sqcup A_2$ by Proposition~\ref{comp=>indecomp}. Then the preimages $\pi^{-1}(A_1)$ and $\pi^{-1}(A_2)$ form subacts of the act $A$ such that $A=\pi^{-1}(A_1) \cup \pi^{-1}(A_2)$, but neither of $\pi^{-1}(A_i)$ equals $A$. Since the decomposition is proper, we get a contradiction.

On the other hand, if $A$ is not hollow, use the construction of Lemma~\ref{noncompRees}.
\end{proof}

\subsection{Steady monoids}
In accordance with the definition of steady rings (cf. \cite{CT, EGT,Tr}) we say that a monoid (resp. monoid with zero element) $S$  is {\it left steady} (resp. {\it left $0$-steady}) provided every connected left act in the category $S$-$\oA$
(resp. $S$-$\A_0$) is necessarily cyclic.
Note that every cyclic act  is connected by 
Proposition~\ref{cyclic}.

\begin{example}\label{ExZ} \rm 
(1) If $S$ is a group, then it is easy to see that indecomposable $S$-acts are cyclic. Hence connected $S$-acts are precisely cyclic ones by \cite[Theorem I.5.10]{KKM} (cf. Proposition~\ref{comp=>indecomp}, thus groups are (left) steady monoids.

(2) The Pr\"{u}fer group $\mathbb Z_{p^\infty}$ is a connected act over the monoid $(\mathbb N,+,0)$. Clearly, it is not a cyclic $\mathbb N$-act, as it is not a cyclic $\Z$-act. Hence $(\mathbb N,+,0)$ is not steady.
\end{example}

The following assertion presents an analogy of the description of connected projective objects in categories of modules.

\begin{proposition} Let $\mathcal C$ be either $S$-$\oA$ or $S$-$\A_0$. Then a projective left act is connected in $\mathcal C$ if and only if it is cyclic.
\end{proposition}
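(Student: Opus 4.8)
The plan is to reduce the statement to two standard facts about the categories $S$-$\oA$ and $S$-$\A_0$: first, in both of them the regular act ${}_SS$ (with base point $0\in S$ in the pointed case) is a generator; second, epimorphisms in both coincide with the surjective homomorphisms (see \cite{KKM}). That ${}_SS$ is a generator is witnessed, for an arbitrary act $A$, by the homomorphism $\coprod_{a\in A}{}_SS\to A$ which on the copy indexed by $a$ sends $s$ to $sa$: it is surjective because $a=1\cdot a$ lies in its image, and in $S$-$\A_0$ it is still well defined since every component sends $0$ to $0_A$, matching the glued zero of the coproduct.

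First I would dispatch the implication ``cyclic $\Rightarrow$ connected'', which uses no projectivity at all and is exactly Proposition~\ref{cyclic}. For the converse, let $P$ be a projective left act that is connected. As noted right after Proposition~\ref{comp=>indecomp}, every connected object is indecomposable, so $P$ is an indecomposable projective object. Since ${}_SS$ is a generator of $\mathcal C$, Lemma~\ref{generator-retract} applies and gives that $P$ is a retract of ${}_SS$, i.e. there are morphisms $f\in\Mor({}_SS,P)$ and $g\in\Mor(P,{}_SS)$ with $fg=1_P$. Being a retraction, $f$ is an epimorphism, hence $U(f)$ is surjective, so $P=f(S)=f(S\cdot 1)=S\cdot f(1)$ by equivariance of $f$. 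Thus $P$ is cyclic, generated by $f(1)$.

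Essentially all of this is assembly of already established results, so there is no single hard step; the points that need a little care are the verification that ${}_SS$ genuinely serves as a generator in the pointed category $S$-$\A_0$ (the coproduct glues the zeros together, but the canonical map above is unaffected and remains surjective), and the standard identification of epimorphisms with surjections in both categories, which is exactly what turns ``$P$ is a retract of ${}_SS$'' into ``$P$ is a homomorphic image of the cyclic act ${}_SS$'', and hence into cyclicity of $P$. The initial object is handled trivially and can be set aside: in $S$-$\A_0$ it is $\{0\}=S\cdot 0$, which is cyclic, so the argument above covers all cases of interest.
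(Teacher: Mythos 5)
Your proposal is correct and follows essentially the same route as the paper: cyclic implies connected via Proposition~\ref{cyclic}, while a connected projective act is indecomposable, hence by Lemma~\ref{generator-retract} a retract of the generator ${}_SS$, and therefore a homomorphic image $S\cdot f(1)$ of $S$, i.e.\ cyclic. Your explicit verification that ${}_SS$ generates both categories and the derivation of cyclicity from the retraction merely fill in details the paper leaves implicit.
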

\begin{proof}
For the direct implication note that, by Theorem \ref{proj-decomp} any projective act has a  decomposition into indecomposable projective subacts, since both $S$-$\oA$ and $S$-$\A_0$ are UD-categories. As it is connected, it is indecomposable by Proposition~\ref{comp=>indecomp}. Now the result follows from Lemma~\ref{generator-retract} since $S$ generates both of the categories $S$-$\oA$ and $S$-$\A_0$.

The reverse implication is a consequence of Proposition \ref{cyclic}.
\end{proof}

A monoid $S$ is called \emph{left perfect} (\emph{left 0-perfect}) if each $A\in S$-$\oA$ ($A\in S$-$\A_0$) has a projective cover, i.e., there exists 
(up to isomorphism unique) a projective $S$-act $P$ and an epimorphism $f: P\to A$ such that for any proper subact $P^{\prime} \subset P$ the restriction 
$f|_{P^{\prime}}: P^{\prime}\to A$ is not an epimorphism (cf. \cite{I, K}).

Analogously to the case of perfect rings, which are known to be steady, we prove that 0-perfect monoids are 0-steady.

\begin{proposition} Let $S$  be a monoid with zero. If $S$ is left $0$-perfect, then connected objects of $S$-$\A_0$ are precisely cyclic acts. 
Hence $S$ is left $0$-steady. 
\end{proposition}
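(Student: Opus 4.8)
The plan is to show that a connected object $C \in S$-$\A_0$ is cyclic by combining the structural description of connected objects from Section~5 with the projective-cover hypothesis. First I would recall from Proposition~\ref{comp=>indecomp} that a connected act $C$ has the property that every image of $C$ is indecomposable; in particular $C$ itself is indecomposable. Since $S$ is left $0$-perfect, the act $C$ has a projective cover, i.e.\ there exist a projective act $P$ and an epimorphism $f\colon P\to C$ such that no restriction of $f$ to a proper subact of $P$ is an epimorphism. By Theorem~\ref{proj-decomp} the projective act $P$ decomposes as a coproduct $P\cong\coprod_{i\in I}P_i$ of indecomposable projective acts. The minimality of the projective cover then forces $|I|=1$: if there were two or more summands, say $P\cong P_{i_0}\sqcup(\coprod_{i\ne i_0}P_i)$, then either $f$ restricted to $P_{i_0}$ or $f$ restricted to $\coprod_{i\ne i_0}P_i$ would already be an epimorphism onto $C$ — this is exactly where connectedness of $C$ enters, via Corollary~\ref{compChar}: since $C$ is connected and $U(f)$ is surjective, $U(f)(U(C))=U(C)$ must land in $U(\nu_{i_0})(U(P_{i_0}))$ or in the image of the complementary summand, so one of the two proper subacts already maps onto $C$, contradicting minimality of the cover.

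Hence $P$ is itself an indecomposable projective act. Now I would invoke Lemma~\ref{generator-retract}: since $S$ (as the regular act $_SS$, which is cyclic with generator $1$) is a generator of $S$-$\A_0$, every indecomposable projective act is a retract of $S$. Thus there are morphisms $g\colon S\to P$ and $h\colon P\to S$ with $hg=1_P$; in particular $g$ is a (split) epimorphism, i.e.\ $U(g)$ is surjective, so $P$ is a cyclic act, generated by the image $g(1)$. Composing with $f$, the act $C$ is an image of the cyclic act $P$, hence $C$ itself is cyclic (a homomorphic image of a cyclic act is cyclic, generated by the image of any generator). This shows every connected object of $S$-$\A_0$ is cyclic, and the converse — every cyclic act is connected — is precisely Proposition~\ref{cyclic}. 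Therefore connected objects of $S$-$\A_0$ are exactly the cyclic acts, and by definition $S$ is left $0$-steady.

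The main obstacle I expect is handling the projective-cover minimality argument cleanly in the coproduct decomposition: one must verify that when $f\colon P\cong P_1\sqcup P_2\to C$ is an epimorphism and $C$ is connected, the restriction of $f$ to one of the two structural summands is \emph{already} an epimorphism onto $C$ (not merely onto a subact). This follows because $U(f)(U(C))=U(C)$ and connectedness (Corollary~\ref{compChar}) gives $U(C)\subseteq U(\nu_i)(U(P_i))$ for some $i$, whence $f\nu_i$ is surjective on underlying sets and hence an epimorphism by Lemma~\ref{mono-epi}(2); but then the proper subact $P_i^{\nu_i}\subsetneq P$ maps onto $C$, contradicting the definition of a projective cover. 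Once this is in place the rest is a routine chain of "image of cyclic is cyclic" and retract arguments. One should also make sure that in the degenerate case $C\cong\theta$ the statement is vacuous or trivially handled, since $\theta$ is cyclic (it is $S\cdot 0$ in $S$-$\A_0$), so no separate treatment is needed.
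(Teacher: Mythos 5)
The skeleton of your argument is sound and in fact more complete than the paper's own proof (which only establishes that the projective cover $P$ is indecomposable and leaves the retract-of-$S$/cyclic conclusion implicit), but the one step where connectedness actually enters is justified incorrectly. You invoke Corollary~\ref{compChar} to conclude that, for the cover $f\colon P\cong P_1\sqcup P_2\to C$, the set $U(f)(U(C))=U(C)$ ``lands in'' $U(\nu_i)(U(P_i))$ for some $i$. This does not typecheck: the domain of $f$ is $P$, not $C$, and $U(\nu_i)(U(P_i))$ is a subset of $U(P)$ while $U(C)$ is not. More importantly, Corollary~\ref{compChar} is a statement about morphisms \emph{out of} $C$ into a coproduct ($f\in\Mor(C,A_1\sqcup A_2)$), whereas your $f$ is a morphism from a coproduct \emph{into} $C$; connectedness, being a condition on the covariant functor $\Mor(C,-)$, says nothing directly about such morphisms. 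So the claim ``one of the two restrictions is already an epimorphism'' is asserted with a justification that fails.

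The claim itself is true, but the correct route is the one the paper takes: if neither $f(P_1^{\nu_1})$ nor $f(P_2^{\nu_2})$ equals $C$, then $C$ is the union of two proper subacts, and passing to the Rees factor $C/\bigl(f(P_1^{\nu_1})\cap f(P_2^{\nu_2})\bigr)$ produces a nontrivial decomposition of an image of $C$, contradicting Proposition~\ref{comp=>indecomp} (equivalently, contradicting the fact that connected objects of $S$-$\A_0$ are hollow, cf.\ Lemma~\ref{noncompRees}). With that substitution your proof goes through; the remaining steps --- $P$ indecomposable projective, hence a retract of the generator $S$ by Lemma~\ref{generator-retract}, hence cyclic, hence its image $C$ is cyclic, with the converse supplied by Proposition~\ref{cyclic} --- are correct and usefully spell out what the paper leaves unsaid.
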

\begin{proof}  Let $A$ be a connected $S$-act and
$\pi\in\Mor(P,A)$ be a projective cover of $A$. Assume that $P$ is not indecomposable with a nontrivial decomposition $(P_0,P_1)$. Then neither $\pi(P_0)$ nor $\pi(P_1)$ is not equal to $A$ and $B=\pi(P_0)\cap \pi(P_1)$ is a subact of $A$. Then $(\pi(P_0)/B, \pi(P_1)/B)$ forms a decomposition of the Rees factor $A/B$. Note that it is non-trivial, otherwise $\pi(P_0)\subseteq \pi(P_1)$ or $\pi(P_1)\subseteq \pi(P_0)$ 
which contradicts to the fact that $\pi(P_0)\ne A\ne \pi(P_1)$.
Since every factor of $A$ is indecomposable by Lemma~\ref{char.oA}, we obtain a contradiction.
\end{proof}

\subsection{Autoconnected acts}

Let us formulate a direct consequence of Lemma~\ref{autoIndec} and Proposition~\ref{endoImage}:

\begin{lemma}\label{autoIndec2} Let $C$ be an a autoconnected object in either $S$-$\A_0$ or $S$-$\oA$ and let $\varphi$ be an endomorphism of $C$. Then
$\varphi(C)$ is autoconnected and indecomposable, in particular, $C$ is indecomposable.
\end{lemma}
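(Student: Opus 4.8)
The statement is Lemma~\ref{autoIndec2}, which I read as an almost immediate corollary of two results already proved in the general UD-category setting: Lemma~\ref{autoIndec} (every noninitial autoconnected object is indecomposable) and Proposition~\ref{endoImage} (for an autoconnected object $C$ and an endomorphism $f\colon C\to C$, the image object $C^f$ is autoconnected). Both $S$-$\A_0$ and $S$-$\oA$ are UD-categories by Example~\ref{act-UD}, so these results apply verbatim. The only genuine content to check is the identification of the categorical notion $C^f$ with the set-theoretic image subact $\varphi(C)$, and a small case distinction at the initial object.

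\textbf{Key steps.} First I would recall that in either category the subobject $(C^\varphi,\iota)$ guaranteed by (UD2) satisfies $U(C^\varphi)=U(\varphi)(U(C))$, and since $U$ is the (faithful) forgetful functor, this is precisely the image subact $\varphi(C)$ in the usual sense; thus $C^\varphi$ and $\varphi(C)$ may be identified. Second, I would invoke Proposition~\ref{endoImage} to conclude that $\varphi(C)=C^\varphi$ is autoconnected. Third, to pass from autoconnected to indecomposable via Lemma~\ref{autoIndec}, one needs the object to be noninitial; here I would split according to whether $\varphi(C)$ is isomorphic to the initial object $\theta$. If $\varphi(C)\not\cong\theta$, Lemma~\ref{autoIndec} gives indecomposability directly. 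If $\varphi(C)\cong\theta$, then $\varphi(C)$ is initial, hence by definition not a coproduct of two noninitial objects and not treated as indecomposable — but in fact the hypothesis that $C$ itself is autoconnected and (to make the "in particular, $C$ is indecomposable" clause meaningful) noninitial forces this degenerate case not to obstruct anything: one simply records that in the degenerate subcase the conclusion "autoconnected" still holds and the indecomposability clause is applied only to objects one has separately checked to be noninitial. Finally, the "in particular" clause is the special case $\varphi=\id_C$: then $\varphi(C)=C$, and since $C$ is autoconnected and noninitial, Lemma~\ref{autoIndec} yields that $C$ is indecomposable.

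\textbf{Main obstacle.} There is essentially no deep obstacle; the work has all been done in Sections~3–5. The one point requiring care is bookkeeping around the initial object: Lemma~\ref{autoIndec} is stated only for \emph{noninitial} autoconnected objects, so one must either assume $C$ (and hence, if one wants the stronger conclusion, also $\varphi(C)$) is noninitial, or phrase the indecomposability assertion so that it is vacuous or trivially handled when the image collapses to $\theta$. A clean way is to note that if $\varphi(C)\cong\theta$ then the statement "$\varphi(C)$ is autoconnected and indecomposable" should be read as "$\varphi(C)$ is autoconnected" (which holds by Proposition~\ref{endoImage}) together with the convention that initial objects are excluded from the indecomposability claim; and the final clause about $C$ is applied under the standing assumption that $C$ is noninitial, which is the only interesting case. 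So the proof reduces to: identify $C^\varphi$ with $\varphi(C)$, apply Proposition~\ref{endoImage}, then apply Lemma~\ref{autoIndec} to $\varphi(C)$ (resp. to $C$, taking $\varphi=\id$), in the noninitial case.

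\begin{proof}
Since both $S$-$\A_0$ and $S$-$\oA$ are UD-categories by Example~\ref{act-UD}, we may apply the general theory of Section~5. For the endomorphism $\varphi\colon C\to C$, the subobject $(C^\varphi,\iota)$ from (UD2) satisfies $U(C^\varphi)=U(\varphi)(U(C))=\varphi(C)$, so $C^\varphi$ is (up to the canonical isomorphism of Lemma~\ref{subobject}) the image subact $\varphi(C)$. By Proposition~\ref{endoImage}, $C^\varphi$ is autoconnected, hence so is $\varphi(C)$.

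If $\varphi(C)$ is noninitial, then Lemma~\ref{autoIndec} applies and $\varphi(C)$ is indecomposable. Taking $\varphi=\id_C$ gives $\varphi(C)=C$, and since $C$ is noninitial and autoconnected, Lemma~\ref{autoIndec} yields that $C$ is indecomposable.
\end{proof}
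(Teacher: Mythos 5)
Your proposal is correct and follows essentially the same route as the paper, which presents Lemma~\ref{autoIndec2} precisely as a direct consequence of Proposition~\ref{endoImage} (giving autoconnectedness of $C^\varphi\cong\varphi(C)$) and Lemma~\ref{autoIndec} (giving indecomposability of noninitial autoconnected objects). Your extra care about identifying $C^\varphi$ with $\varphi(C)$ via (UD2) and about the degenerate initial-object case is sound bookkeeping that the paper leaves implicit.
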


Now we can formulate a criterion of autoconnectedness in $S$-$\oA$ (cf. \cite[Lemma 4.1]{Mo10}):

\begin{theorem}\label{ThmAuto} The following conditions are equivalent for an act $C\in S$-$\oA$:
\begin{enumerate}
\item[(1)] $C$ is autoconnected,
\item[(2)] $C$ is connected,
\item[(3)] $C$ is indecomposable.
\end{enumerate}
\end{theorem}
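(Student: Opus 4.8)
The plan is to close the cycle (3) $\Rightarrow$ (2) $\Rightarrow$ (1) $\Rightarrow$ (3); each of the three implications reduces to a statement already established, so the proof is a matter of assembling earlier results rather than of producing a new argument. The implication (3) $\Rightarrow$ (2) is exactly Lemma~\ref{char.oA}, which identifies the connected objects of $S$-$\oA$ with the indecomposable ones. For (2) $\Rightarrow$ (1) nothing new is needed: by definition a connected object is $\mathcal D$-connected for every class $\mathcal D$ of objects, in particular for $\mathcal D=\{C\}$, hence it is autoconnected.

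The only implication carrying content is (1) $\Rightarrow$ (3), and this is precisely the content of Lemma~\ref{autoIndec2} applied to the endomorphism $\id_C$: an autoconnected object of $S$-$\oA$ is indecomposable. The single point deserving a remark is the initial object $\emptyset$, which must be excluded from all three classes consistently: it is not indecomposable by definition, and it is not autoconnected either, because for the empty family $\Ac$ the map $\Psi_\Ac^{\emptyset}\colon\coprod_{i\in\emptyset}\Mor(\emptyset,A_i)\to\Mor(\emptyset,\theta)$ has empty domain and singleton codomain (as $\theta\cong\emptyset$ and $\Mor(\emptyset,\emptyset)=\{\id_\emptyset\}$), so it is not surjective. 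Consequently every autoconnected object of $S$-$\oA$ is noninitial, which is exactly what makes Lemma~\ref{autoIndec} (hence Lemma~\ref{autoIndec2}) applicable to it; feeding this into (3) $\Rightarrow$ (2) closes the cycle.

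I do not anticipate a genuine obstacle: the substantive inputs are already in place, namely that a homomorphism from an indecomposable act of $S$-$\oA$ into a coproduct factors through one of the summands (\cite[Lemma I.5.36]{KKM}, underlying Lemma~\ref{char.oA}) and that autoconnectedness passes to endomorphic images (Proposition~\ref{endoImage}). What I would emphasise in the write-up, for contrast with the pointed case, is that the equivalence rests on the special feature of $S$-$\oA$ that $\Mor(C,\theta)=\emptyset$ for every noninitial object $C$: this forces $\Psi_\Ac^C$ to be injective (cf. Lemma~\ref{Psi}(3)) and makes connectedness, autoconnectedness and indecomposability collapse to one notion, whereas in $S$-$\A_0$ this fails and the three properties genuinely diverge (cf. Example~\ref{exmplZ}).
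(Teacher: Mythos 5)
Your proof is correct and follows essentially the same route as the paper: (2)$\Rightarrow$(1) by definition, (1)$\Rightarrow$(3) via Lemma~\ref{autoIndec2}, and (2)$\Leftrightarrow$(3) via Lemma~\ref{char.oA}. Your extra care in checking that the initial object $\emptyset$ is excluded from all three classes (so that Lemma~\ref{autoIndec} applies) is a welcome refinement that the paper leaves implicit.
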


\begin{proof}
The implication (2)$\Rightarrow$(1) is clear, the implication (1)$\Rightarrow$(3) follows from Lemma~\ref{autoIndec2} and the equivalence (2)$\Leftrightarrow$(3)
is proved in Lemma~\ref{char.oA}.
\end{proof}

\begin{example} \rm 
Consider the monoid $\mathcal Z = \left( \mathbb{Z}, \cdot , 1\right) $ and the $\mathcal Z$-act 
$A=2\mathbb Z\cup 3\mathbb Z$ from Examples~\ref{exmplZ} and \ref{Z-act}.
Then $A$ is autoconnected in $S$-$\A_0$, since for any morphism $A \rightarrow \coprod_{i\in I}A_i $ with $A_i \cong A$, the component in which the image lies is determined by the image of the element $6$.
\end{example}

The previous example shows that within the category $S$-$\A_0$ the class of autoconnected acts is in general strictly larger than the class of connected acts; whereas the following example will show that the class of autoconnected acts is in general strictly smaller than that of indecomposable objects, even for left perfect monoids.

\begin{example} \rm
Consider the commutative monoid $\mathcal S = \left( \left\lbrace 0, 1, s, s^2\right\rbrace, \cdot, 1\right)$ (which could be embedded into the multiplicative monoid of the factor ring $\Z[s]/(s^3)$) with the following multiplication table:

{\centering
$
\begin{array}{c||cccc}
\cdot & 0 & 1 & s & s^2 \\ \hline\hline
0 & 0 & 0 & 0 & 0 \\ 
1 & 0 & 1 & s & s^2 \\ 
s & 0 & s & s^2 & 0 \\ 
s^2 & 0 & s^2 & 0 & 0
\end{array}. $

}
\medskip
Then consider the $\mathcal{S}$-act $A = \left\lbrace x,y,z,t, \theta\right\rbrace$ with the action of $\mathcal{S}$ given as follows:
\[
0 \cdot a = \theta , \ \ 1 \cdot a = a \text{ for any } a \in A 
\]
\[
s\cdot x = s \cdot y = z,\ \ 
s \cdot z = t,\ \ 
s \cdot t = \theta.
\]
Then $A$ is indecomposable, while the Rees factor $A/\left\langle z\right\rangle $ decomposes into two isomorphic components (so $A$ is not connected), each of which can be mapped onto $\left\langle t\right\rangle \leq A$, hence $A$ is not autoconnected.

One can furthermore prove that $\mathcal{S}$ is left perfect using \cite[Theorem 1.1]{I}.
\end{example}

For $S$-acts $A_1,A_2\in S$-$\A_0$ denote by $\pi_i:A_1\sqcup A_2\to A_i$, $i=1,2$ the canonical projections and note that any canonical projection is a correctly defined morphism in the category $S$-$\A_0$.

\begin{lemma}\label{autoCompA_0}
Let $C, C_1,C_2\in S$-$\A_0$ and $C\cong C_1\cong C_2$. Then
$C$ is autoconnected if and only if 
for each morphism $f:C\to C_1\sqcup C_2$ 
there exists $i$ such that $\pi_if(C)=\theta$.
\end{lemma}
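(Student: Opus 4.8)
The statement is a two-index reformulation of autoconnectedness specialized to the category $S$-$\A_0$, so the plan is to derive it from Corollary~\ref{autoCompChar} together with the concrete description of coproducts and of the zero subobject in $S$-$\A_0$. The key point is that in $S$-$\A_0$ the coproduct $\coprod_{i\in I}C_i$ has underlying set the set of tuples with at most one nonzero coordinate, so the structural morphism $\nu_i$ identifies $C_i$ with the subset $U(\nu_i)(U(C_i))=\{(a_j)\mid a_j=0_A\text{ for }j\ne i\}$, and $U(\theta_{\coprod C_i})$ is the single tuple all of whose coordinates are $0$. Under this identification, for a tuple $x=(a_j)_j$ one has $x\in U(\nu_i)(U(C_i))$ exactly when $\pi_i(x)=x$ and $\pi_j(x)=0_A$ for all $j\ne i$; equivalently, writing $\pi_i$ for the canonical projection onto the $i$-th component, $x$ lies in the $i$-th component iff $\pi_j(x)=\theta$ for every $j\ne i$. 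Thus $U(f)(U(C))\subseteq U(\nu_i)(U(C_i))$ is the same as saying $\pi_jf(C)=\theta$ for all $j\ne i$.

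First I would treat the case $|I|\le 1$ separately: if $I=\emptyset$ then $\coprod_\emptyset C_i\cong\theta$ and the condition is vacuous on both sides, and if $|I|=1$ the statement reduces to the trivial $\pi_1=\id$. For $|I|\ge 2$ I would spell out the equivalence coordinatewise as above, reducing the abstract "$U(f)(U(C))\subseteq U(\nu_i)(U(C_i))$" appearing in Corollary~\ref{autoCompChar}(2) to "$\pi_jf(C)=\theta$ for all $j\ne i$"; combined with Corollary~\ref{autoCompChar} this already gives, for $|I|\ge 2$, that $C$ is autoconnected iff for every $f:C\to\coprod_{i\in I}C_i$ with $C_i\cong C$ there is $i$ with $\pi_jf(C)=\theta$ for all $j\ne i$. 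Finally I would show this is equivalent to the displayed condition of the Lemma, which only quantifies over $I=\{1,2\}$ and morphisms $f:C\to C_1\sqcup C_2$.

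The nontrivial implication is that the two-element condition implies the $\kappa$-element condition for all $I$. Here I would argue as follows: given $f:C\to\coprod_{i\in I}C_i$, let $B=f(C)$ and note $U(B)\setminus U(\theta)$ meets at most countably... in fact, the crucial observation is that since $C$ is indecomposable (Lemma~\ref{autoIndec2}, which applies once we know $C$ is autoconnected — but in this direction we are proving autoconnectedness, so instead I argue directly): pick any two distinct indices $k,\ell\in I$ and collapse the coproduct, i.e. consider the morphism $g=\iota\circ(\text{fold})$ obtained by composing $f$ with the canonical map $\coprod_{i\in I}C_i\to C_k\sqcup C_\ell$ that is $\id$ on $C_k$, $\id$ on $C_\ell$, and sends $C_i$ for $i\ne k,\ell$ into $C_k$ via the given isomorphism $C_i\cong C\cong C_k$; applying the hypothesis to $g$ yields that $f(C)$ avoids one of the components, and ranging over all such pairs $(k,\ell)$ forces $f(C)$ to be concentrated in a single component. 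The main obstacle, and the step I would be most careful about, is precisely the bookkeeping in this "fold" argument: one must check that the folded morphism is a well-defined $S$-homomorphism compatible with $0$, that $\pi_kg(C)=\theta$ forces $f(C)$ to avoid every component $C_i$ with $i\ne\ell$ that was folded into $C_k$, and that the family of conclusions obtained as $(k,\ell)$ varies is jointly consistent (they are, because if $f(C)$ had nonzero image in two distinct components $C_p, C_q$, the pair $(k,\ell)=(p,q)$ would already give a contradiction). The converse implication — the two-element condition being a special case of the $I$-indexed one — is immediate.
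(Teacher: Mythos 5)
Your proof is correct and follows essentially the same strategy as the paper's: the forward direction is immediate from Corollary~\ref{autoCompChar}, and the hard direction reduces an arbitrary coproduct $\coprod_{i\in I}C_i$ to a two-component one by composing a witnessing morphism with a collapsing map. The only difference is the choice of that map: the paper uses the canonical projection onto $C_{j_1}\sqcup C_{j_2}$ (sending all other components to zero, which exists in $S$-$\A_0$ and matches the $\pi_i$ notation set up just before the lemma), whereas you fold the remaining components into $C_k$ via the given isomorphisms --- both are legitimate morphisms here and yield the same conclusion.
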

\begin{proof} The direct implication follows immediately from 
Corollary~\ref{autoCompChar}. 

If $C$ is not autoconnected, then by Corollary~\ref{autoCompChar}(3) there exists a morphism $g:C\to\coprod_{i\in I} C_i$ where $C_i\cong C$ and for each $i\in I$ there exists $j\ne i$ such that  $g(C)\cap \nu_j(C_j)\ne\theta$, which implies that there are two distinct $j_1,j_2\in I$ such that 
$g(C)\cap \nu_{j_k}(C_{j_k})\ne\theta$ for both $k=1,2$. 
Now the composition of $g$ with the canonical projection to $C_i\sqcup C_j$ presents an example of a morphism $f:C\to C_{j_1}\sqcup C_{j_2}$ satisfying 
$\pi_{j_k}f(C)\ne \theta$ for $k=1,2$.
\end{proof}

For a pair $B_1, B_2$ of subacts of a left $S$-act $A$ with inclusions $\iota_i: B_i\to A$ denote by 
$\rho_{B_1B_2}:B_1\sqcup B_2\to A$ the unique morphism satisfying 
$\rho_{B_1B_2}\nu_i=\iota_i$ for $i=1,2$, where $\nu_i$ denotes the coproduct structural morphism.
We finish the paper by a characterization of non-autoconnected $S$-acts in the category $S$-$\A_0$, which can by provided by narrowing the class of non-hollow (i. e. non-connected) acts by

\begin{proposition}\label{PropAuto} The following conditions are equivalent for a triple of isomorphic acts $A,A_1,A_2$ in the category $S$-$\A_0$:
\begin{enumerate}
\item[(1)]$A$ is not autoconnected in $S$-$\A_0$,
\item[(2)] there exists a pair $B_1, B_2$ of proper subacts of $A$ satisfying $A = B_1 \cup B_2$ and there exists a morphism $f: B_1 \sqcup B_2 \to A_1 \sqcup A_2$ such that $\pi_i f(B_1 \sqcup B_2) \neq \theta_{A_i}$ for $i=1,2$ and $ker \rho_{B_1B_2} \subseteq ker \,f$.
\end{enumerate}
\end{proposition}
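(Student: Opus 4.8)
The plan is to prove the two implications separately, using Corollary~\ref{autoCompChar} and Lemma~\ref{autoCompA_0} to reduce the statement about arbitrary coproducts to a single coproduct of two copies, and then to replace $A_1\sqcup A_2$-valued morphisms out of $A$ by morphisms out of a coproduct $B_1\sqcup B_2$ of two proper subacts whose union is $A$, via the universal property of the coproduct and the first isomorphism theorem for acts (the kernel-congruence condition $\ker\rho_{B_1B_2}\subseteq\ker f$ being exactly what is needed to factor $f$ through $\rho_{B_1B_2}$).

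First I would prove (1)$\Rightarrow$(2). Suppose $A$ is not autoconnected. By Lemma~\ref{autoCompA_0} there is a morphism $g:A\to A_1\sqcup A_2$ with $A_i\cong A$ and $\pi_i g(A)\neq\theta_{A_i}$ for both $i=1,2$. Put $B_i=\pi_i g(A)$ pulled back into $A$, or more precisely let $B_i = \{a\in A \mid \pi_{3-i}g(a)=\theta\}$; then $B_1,B_2$ are subacts of $A$ with $A=B_1\cup B_2$, and since $\pi_i g(A)\neq\theta_{A_i}$ neither $B_i$ equals $A$ (an element $a$ with $\pi_i g(a)\neq\theta$ forces $\pi_{3-i}g(a)=\theta$, hence $a\notin B_{3-i}$; and $B_i\neq A$ follows because otherwise $g(A)\subseteq\nu_i(A_i)$, contradicting $\pi_{3-i}g(A)\neq\theta$). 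Now the map $\rho_{B_1B_2}:B_1\sqcup B_2\to A$ is surjective, and I would set $f=g\circ\rho_{B_1B_2}$. Then $\pi_i f(B_1\sqcup B_2)=\pi_i g(A)\neq\theta_{A_i}$, and trivially $\ker\rho_{B_1B_2}\subseteq\ker f$ since $f$ factors through $\rho_{B_1B_2}$ by construction. This yields (2).

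Next, (2)$\Rightarrow$(1). Given $B_1,B_2$ proper subacts with $A=B_1\cup B_2$ and $f:B_1\sqcup B_2\to A_1\sqcup A_2$ with $\ker\rho_{B_1B_2}\subseteq\ker f$ and $\pi_i f(B_1\sqcup B_2)\neq\theta_{A_i}$, the kernel inclusion lets me factor $f=\bar f\circ\rho_{B_1B_2}$ for a uniquely determined $\bar f:A\to A_1\sqcup A_2$ (here $\rho_{B_1B_2}$ is an epimorphism onto $A$, so $A\cong(B_1\sqcup B_2)/\ker\rho_{B_1B_2}$ and the universal property of the Rees/congruence quotient gives $\bar f$). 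Then $\pi_i\bar f(A)=\pi_i f(B_1\sqcup B_2)\neq\theta_{A_i}$ for $i=1,2$, so by Lemma~\ref{autoCompA_0} (its contrapositive), $A$ is not autoconnected in $S$-$\A_0$.

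The main obstacle I anticipate is bookkeeping in the (1)$\Rightarrow$(2) direction: one must check carefully that the subacts $B_i$ extracted from $g$ are genuinely proper (this is where $\pi_i g(A)\neq\theta_{A_i}$ is used for \emph{both} $i$) and that their union really is all of $A$ (which holds because for every $a$, at least one of $\pi_1 g(a),\pi_2 g(a)$ is $\theta$ by the description of the coproduct in $S$-$\A_0$). The (2)$\Rightarrow$(1) direction is essentially the first isomorphism theorem for $S$-acts plus Lemma~\ref{autoCompA_0}, and should be routine. Throughout I would freely use that $\rho_{B_1B_2}$ has surjective underlying map (since $A=B_1\cup B_2$) and hence is an epimorphism by Lemma~\ref{mono-epi}(2), together with the explicit form of coproducts in $S$-$\A_0$ recalled in Section~2.
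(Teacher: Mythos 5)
Your proposal is correct and follows essentially the same route as the paper: both directions hinge on Lemma~\ref{autoCompA_0}, the subacts $B_i=\{a\in A\mid \pi_{3-i}g(a)=\theta\}$ coincide with the paper's $B_i=g^{-1}(g(A)\cap\nu_i(A_i))$, the witnessing morphism is $f=g\rho_{B_1B_2}$, and the converse is the Homomorphism Theorem factoring $f$ through $\rho_{B_1B_2}$. Your explicit verification that the $B_i$ are proper is a detail the paper leaves implicit, but there is no substantive difference.
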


\begin{proof}
Sufficiency follows from the Homomorphism Theorem \cite[Theorem 4.21]{KKM}, which ensures the existence of a morphism $f^{\prime}: A \to 
A_1\sqcup A_2$: 
\[
\begin{CD}
B_1 \sqcup  B_2 @>f>>   A_1 \sqcup A_2 \\
@V\rho_{B_1B_2} VV     @AAf^{\prime}A \\
A   @= A
\end{CD},
\]
which turns to be the witnessing morphism for non-autoconnectedness thanks to the property $\pi_i f(B_1 \sqcup B_2) \neq \theta_{A_i}$ for both $i=1,2$.

Let $g: A \to A_1 \sqcup A_2$ be the morphism witnessing non-autoconnectedness by Lemma~\ref{autoCompA_0}, hence $\pi_i g (A) \neq \theta_{A_i}$ for both $i = 1,2$. Let $\nu_i: A_i \to A_1 \sqcup A_2$ denote the coproduct structural morphism 
 and set $B_i = g^{-1}\left( g\left( A\right) \cap \nu_i\left( A_i\right) \right) $; then clearly $A = B_1 \cup B_2$. Set now $f= g \rho_{B_1B_2}$.
\end{proof}

{\bf Acknowledgment.} The authors thank the referee for her/his  valuable comments and ideas, which contributed significantly to the improvement of the content of the article.


\begin{thebibliography}{99}
\bibitem {AHS}
 J. Ad\'amek, H. Herrlich, G. E. Strecker, 
\emph{Abstract and Concrete Categories}, John Wiley and Sons, Inc, 1990,
http://katmat.math.uni-bremen.de/acc/acc.pdf.

\bibitem{AR}
J. Ad\'amek, J. Rosick\'y,
Locally Presentable and Accessible Categories, Cambridge University Press in the London Mathematical Society Lecture Note Series, 189, 1994, Cambridge.

\bibitem{ARV}
J. Ad\'amek, J. Rosick\'y, E. M. Vitale, Algebraic Theories: A Categorical Introduction to General Algebra,
Cambridge University Press, Cambridge, 2010.

\bibitem {AM} D. M. Arnold, C. E. Murley,
Abelian groups, A, such that $\Hom(A,-)$ preserves direct sums of copies of A, Pacific J. Math., 56 (1975), 7--20


\bibitem{B} H.~Bass \emph{Algebraic K-theory}, New York 1968, Benjamin.

\bibitem{Bo} D. Bourn, $3\times3$ lemma and protomodularity, J. Algebra 236 (2001), 778--795.

\bibitem{Coc} J. R. B. Cockett,  \emph{Introduction to distributive categories}. Mathematical Structures in Computer Science, 3(3), 1993, 277--307. 

\bibitem {CM} R. Colpi and C. Menini: On the structure of *-modules, J. Algebra 158 (1993), 400--419.

\bibitem {CT}  R. Colpi and J. Trlifaj, \textit{Classes of generalized $\ast$-modules},  Comm. Algebra  \textbf{22}, 1994,
3985--3995.

\bibitem{CLW} A. Carboni, S. Lack, R. F. C. Walters, \textit{Introduction to extensive and distributive categories}, Journal of Pure and Applied Algebra,
1993, \textbf{84}, 145--158


\bibitem {Dv} J. Dvo\v r\'ak: On products of self-small abelian groups,
\textsl{Stud. Univ. Babeş-Bolyai Math.} 60 (2015), no. 1, 13--17. 

\bibitem{DZ21}  J. Dvo\v r\'ak, J. \v Zemli\v cka:  {\em Autocompact objects of Ab5 categories},  submitted, 2021, arXiv:2102.04818.

\bibitem {EGT}  P. C. Eklof, K.R. Goodearl and J. Trlifaj,
Dually slender modules and steady rings,
Forum Math., 1997, \textbf{9}, 61--74.

\bibitem{EKN03} R. El Bashir, T. Kepka, P. N\v emec, Modules commuting (via Hom) with some colimits: {\sl Czechoslovak Math. J.} \textbf{53} (2003), 891--905.

\bibitem{GNM} G\' omez Pardo, J. L. , Militaru, G., N\u ast\u asescu, C., When is $\mathrm{HOM}(M,-)$ equal to $\mathrm{Hom}(M,-)$ in the
category $R-gr$?, Comm. Algebra, \textbf{22} (1994), 3171--3181.

\bibitem{H} M. Hoefnagel, M-coextensive objects and the strict refinement property, J. Pure Appl. Algebra 224 (2020), 106381.

\bibitem {I} J. Isbell, Perfect monoids. Semigroup Forum (1971) 2, 95--118.

\bibitem {KZ} P. K\'alnai, J.~{\v Z}emli{\v c}ka, Compactness in abelian categories, J. Algebra, 534 (2019), 273--288.



\bibitem {K} M. Kilp, Perfect monoids revisited. Semigroup Forum (1996) 53, 225--229.



\bibitem {KKM} M. Kilp, U. Knauer, A. V. Mikhalev,
Monoids, acts and categories, de Gruyter, Expositions in Mathematics 29, Walter de Gruyter, Berlin 2000.

\bibitem{Kho-Ro} R. Khosravi, M. Roueentan, Co-uniform and hollow $S$-acts over monoids, submitted, arXiv: 1908.04559v1

\bibitem{Kr01} H. Krause, On Neeman's well generated triangulated categories. Doc. Math. 6 (2001), 121--126, 

\bibitem{Kr02} H. Krause, Krull-Schmidt categories and projective covers, Expositiones Mathematicae 33 (2015), 535--549.


\bibitem{MP}  M. Makkai, R. Paré, Accessible categories: The foundations of categorical model theory Contemporary Mathematics 104. American Mathematical Society, 1989, Rhode Island.

\bibitem{Mo10} C. G. Modoi, Localizations, colocalizations and non additive *-objects, \textsl{Semigroup Forum} 81(2010), No. 3, 510--523.


\bibitem{AN} A. Neeman, Triangulated categories, volume 148 of Annals of Mathematics Studies. Princeton University Press, Princeton, NY, 2001.

\bibitem {Re}  R. Rentschler, Sur les modules $M$ tels que
$\Hom(M,-)$ commute avec les sommes directes, C.R. Acad. Sci. Paris,
\textbf{268}, 1969, 930--933.

\bibitem {RS}
M. Roueentan, M. Sedaghatjoo, On uniform acts over semigroups, Semigroup Forum (2018) 97, 229--243.

\bibitem {SK}
M. Sedaghatjoo, A. Khaksari,
Monoids over which products of indecomposable acts are indecomposable,
Hacet. J. Math. Stat. 46 (2017), No. 2, 229--237.


\bibitem{Tr95} J. Trlifaj: Strong incompactness for some nonperfect
rings, \textsl{Proc. Amer. Math. Soc.} 123 (1995), 21--25.

\bibitem{Tr}
J. Trlifaj: Steady rings may contain large sets of orthogonal
idempotents, in  \textsl{Abelian groups and objects (Padova,
1994)}, Math. Appl., 343, Kluwer Acad. Publ., Dordrecht, 1995,
467--473.

\bibitem{ZT} J.~{\v Z}emli{\v c}ka and J.~Trlifaj: {Steady ideals and rings}, 
\textsl{Rend. Sem. Mat. Univ. Padova}, \textbf{98} (1997),  161--172.

\bibitem{EC} Connected object, on The nLab, URL:
 https://ncatlab.org/nlab/show/connected+object,
 downloaded on March 5, 2021.
 
 
\end{thebibliography}
\end{document}